\documentclass[12pt]{amsart}
\usepackage{amsmath, amsthm, amssymb}
\usepackage{geometry}
\usepackage{hyperref}
\usepackage{enumerate}
\usepackage{cite}
\usepackage{ytableau}
\usepackage{adjustbox}
\usepackage{tikz}
\usepackage{float}
\usepackage{bm}
\usepackage{longtable}
\usepackage{enumitem}
\usepackage{mathtools}
\usetikzlibrary{decorations.pathreplacing}
\numberwithin{equation}{section} 
\newtheorem{theorem}{Theorem}[section]
\newtheorem*{maintheorem}{Main Theorem}
\newtheorem{lemma}[theorem]{Lemma}
\newtheorem{proposition}[theorem]{Proposition}

\theoremstyle{definition}
\newtheorem{definition}[theorem]{Definition}
\theoremstyle{remark}
\newtheorem{remark}[theorem]{Remark}
\newtheorem{example}[theorem]{Example}

\newcommand{\shiftnoarrow}[2]{\ensuremath \raisebox{#1cm}{${#2}$}}
\newcommand{\YoungWithData}[2]{%
	\begin{tabular}{@{}c@{}}
		#1 \\[-0.6em]
		{\small $#2$}
	\end{tabular}
}

\title{Verma Bases for finite dimensional Representations of the orthosymplectic Lie superalgebra $\mathfrak{spo}(4|1)$}

\author{Bintao Cao$^1$}
\address{1. School of Mathematics and Statistics, Yunnan University, Kunming 650500, China}
\email{btcao@ynu.edu.cn}

\author{Ye Huang$^2$}
\address{2. School of Mathematics and Statistics, Yunnan University, Kunming 650500, China}
\email{huangye2461@163.com}

\begin{document}
	
	\begin{abstract}
			We define the Verma vector system for each finite dimensional irreducible representation of the orthosymplectic Lie superalgebra $\mathfrak{spo}(4|1)$ with the highest weight $\lambda,$ via the conditions that making a tableau with shape $\lambda$ to be a Kashiwara-Nakashima tableau.
		We then show the linearly 
		independence of this vector system.
		It turns out to be a basis of the finite dimensional irreducible representation $L(\lambda)$ of the orthosymplectic Lie superalgebra $\mathfrak{spo}(4|1)$ with the highest weight $\lambda,$
		which analogs to the Verma basis of representations of $\mathfrak{sp}_4,$ called the Verma basis of the finite dimensional irreducible representation of $\mathfrak{spo}(4|1)$.
	\end{abstract}
	
	\maketitle
	
	\section{Introduction}
	
This manuscript	is inspired by our previous work \cite{CaoHuang},
in which 
we studied Verma bases of finite dimensional irreducible modules of the finite dimensional simple Lie algebra $\mathfrak{sp}_4.$ 
We gave a natural one-to-one correspondence between the set of Verma vectors and the set of Kashiwara-Nakashima tableaux
of $\mathfrak{sp}_4.$ 
We also gave a direct proof of the linear 
independence of the system of Verma vectors.

As we have pointed out in \cite{CaoHuang}, 
finding some ``good'' bases is 
an important problem in representation theory. 
Various good bases have already been found,
and have been used to deal with all kinds of problems in the representation theory of Lie groups, Lie algebras, quantum groups and Hecke algebras.
For example, canonical and dual canonical bases,
introduced by Kazhdan-Lusztig\cite{KazhdanLusztig1979} can give the characteristic formulas of some important modules in
BGG category $\mathcal O.$
Crystal bases, introduced by Lusztig~\cite{lusztig1990canonical} and Kashiwara~\cite{kashiwara1990crystalizing,kashiwara1991}, contain lots of information of the integrable
modules at $q=0$ for the quantized enveloping algebras of some symmetrizable Kac-Moody Lie algebras.
Gelfand-Tsetlin bases, given by Gelfand and Tsetlin~\cite{gelfand1, gelfand2},
are used to study the branching theory.

Verma bases are monomial bases of certain highest weight modules
of some semisimple Lie algebras, expressed in terms of negative simple root vectors. 
The Verma basis was first introduced by
Li, Moody, Nicolescu, and Patera~\cite{li1986verma} in 1986,
based on Verma's method.
They worked out the Verma bases for finite dimensional irreducible representations of the Lie algebras \( A_n (n \geq 1), B_n (2 \leq n \leq 6), C_n (2 \leq n \leq 6), D_n (4 \leq n \leq 6), \) and \( G_2 \). But the proof of the linear independence of these basis vectors is not so clear.
Raghavan and Sankaran~\cite{Raghavan1999} gave a proof for  \( A_n \). 
Po{\v{s}}ta and Havl{\'\i}{\v{c}}ek~\cite{posta2013note} discussed the construction of the Verma bases of the Verma modules of  \( A_n \). 
Hall~\cite{hall1987verma} presented the Verma bases for Verma modules of $A_n$ and some other types of Lie algebras in his Thesis. 
For the application, the  Verma basis has been used in Quantum Chemistry
by Paldus and Planelles~\cite{Paldus2018}.

In this paper, 
we define Verma bases of finite dimensional irreducible modules of the  orthosymplectic Lie superalgebra $\mathfrak{spo}(4|1).$
Analogous to the Lie algebra case, 
they are also monomial bases of certain highest weight modules, expressed in terms of negative simple root vectors.  
The key is that the definition of Verma bases in the Lie algebra $\mathfrak{sp}_4$ case,  depends on certain inequalities.
These inequalities can be explained as just the conditions for tableaux to be Kashiwara-Nakashima tableaux.
In $\mathfrak{spo}(4|1)$ case,
we also have a system of inequalities
that makes a tableau to be a Kashiwara-Nakashima tableau
of $\mathfrak{spo}(4|1).$
Then we can define the Verma bases of the irreducible modules
of $\mathfrak{spo}(4|1)$ by these inequalities naturally. 
After that,
we give a one-to-one correspondence between the set of Verma vectors and the set of Kashiwara-Nakashima tableaux.
At last, we give a proof of the linear 
independence of the system of Verma vectors.
Exactly, we have the following result.

Let $L(\lambda)$ be the finite dimensional irreducible representation of $\mathfrak{spo}(4|1)$ with highest weight $\lambda$. We write $\lambda = m_1\omega_1 + 2m_2\omega_2 = (m_1+m_2)\epsilon_1 + m_2\epsilon_2$ for $m_1, m_2 \in \mathbb{Z}_{\geq 0}$, where $\omega_1, \omega_2$ are the fundamental weights. We also write 
$\lambda=(\lambda_1,\lambda_2)=(m_1+m_2,m_2)$
be a partition for convenience.
Let the nonzero vector $v_{\lambda}$ be the highest weight vector of $L(\lambda)$. 
Then the system of Verma vectors of $L(\lambda)$ 
is defined by
\begin{equation} \label{def:H}
	H:= \left\{ f_1^{b_4} f_2^{b_3} f_1^{b_2} f_2^{b_1} v_\lambda \;\middle|\;
	\begin{aligned}
		b_i &\in \mathbb Z_{\ge0}, \; i=1,2,3,4,\\
		0 &\leq b_1 \leq 2m_2, \\
		0 &\leq b_2 \leq m_1 + b_1, \\
		0 &\leq b_3 \leq \min\{b_2 + m_1,\, 2b_2\}, \\
		0 &\leq b_4 \leq \min\{m_1,\, \tfrac{1}{2}b_3\}
	\end{aligned}
	\right\}, 
\end{equation}
where $f_{1}=E_{21}-E_{34}$ is the root vector corresponding to the negative simple root $\epsilon_2 - \epsilon_1$ of $\mathfrak{spo}(4|1)$, and $f_{2}=E_{52}-E_{45}$ is the root vector corresponding to the negative simple root $-\epsilon_2$.

\begin{maintheorem}
	Let $L(\lambda)$ be the finite dimensional irreducible representation of $\mathfrak{spo}(4|1)$ with highest weight $\lambda$. Let $H$ be the system of Verma vectors of  $L(\lambda)$ (see \eqref{def:H}).
	Then 
	\begin{itemize}
		\item[i).] There exists a weight-preserving one-to-one correspondence between $H$ and the set of Kashiwara-Nakashima tableaux of $\mathfrak{spo}(4|1)$
		with the shape $\lambda$.
		\item[ii).] $H$ is a basis of $L(\lambda).$ 
		
	\end{itemize}
\end{maintheorem}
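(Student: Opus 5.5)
The proof has three parts. Part (i) is a bijection with Kashiwara--Nakashima tableaux. A dimension count then reduces (ii) to linear independence. Linear independence is proved by a leading-term argument.

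\textbf{Part (i).} The plan is to write down the Kashiwara--Nakashima conditions for $\mathfrak{spo}(4|1)$ tableaux of shape $\lambda=(\lambda_1,\lambda_2)$ explicitly. The alphabet is $1\prec 2\prec 0\prec \bar 2\prec \bar 1$, with $0$ the odd letter, which may repeat in a row but not in a column. A tableau of shape $\lambda$ is then encoded by the multiplicities of each letter in each row. The semistandard and column conditions, together with the KN-type conditions on columns containing pairs $i,\bar i$, become a finite system of linear inequalities. After eliminating the variables fixed by the shape, I would pick four statistics of the tableau and call them $b_1,\dots,b_4$.

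\begin{itemize}
\item $b_1$ is the number of steps lowering the second row out of the letter $2$. This gives $0\le b_1\le 2m_2$, because the letters $0,\bar2$ contribute weight $0$ or $-\epsilon_2$ per box.
\item $b_2$ records the $f_1$-moves $1\to 2$ and $\bar 2\to\bar 1$.
\item $b_3$ and $b_4$ are defined similarly.
\end{itemize}

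These are arranged so that the weight of the tableau equals $\lambda-(b_2+b_4)\alpha_1-(b_1+b_3)\alpha_2$, where $\alpha_1=\epsilon_1-\epsilon_2$ and $\alpha_2=\epsilon_2$. That matches the weight of $f_1^{b_4}f_2^{b_3}f_1^{b_2}f_2^{b_1}v_\lambda$. I would then check case by case that the KN inequalities transform exactly into the inequalities in \eqref{def:H}. The bounds involving $\min$ and $\frac12 b_3$ reflect the odd root $\epsilon_2$, for which $f_2^2\neq 0$. Finally I would give the inverse map to obtain a weight-preserving bijection.

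\textbf{Part (ii), reduction.} By the crystal-theoretic results for $\mathfrak{spo}(2m|1)$ quoted in the paper (Benkart--Kang--Kashiwara type), $\dim L(\lambda)$ equals the number of KN tableaux of shape $\lambda$. By (i) this equals $|H|$. So it suffices to prove that $H$ is linearly independent. Since the vectors have well-defined weights, it is enough to work inside one weight space at a time.

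\textbf{Part (ii), linear independence.} Fix a weight space. The elements of $H$ there satisfy $b_2+b_4=p$ and $b_1+b_3=q$, so they form a one- or two-parameter family. I would realize $L(\lambda)$ concretely so that leading terms can be read off, using one of two models:
\begin{itemize}
\item a tensor product of copies of the natural module $\mathbb C^{4|1}$ and its second exterior-type power, containing $v_\lambda$ as the product of highest vectors;
\item the Verma module modulo explicit relations.
\end{itemize}
I would order the summands by a suitable monomial order, for instance lexicographic on $(b_1,b_2,\dots)$ or on the tableau read as a word. The goal is to show that each vector $f_1^{b_4}f_2^{b_3}f_1^{b_2}f_2^{b_1}v_\lambda$ has a distinguished leading monomial corresponding to its KN tableau, with nonzero coefficient. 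Triangularity then gives independence.

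\textbf{Main obstacle.} The hard step is proving that these leading coefficients are nonzero. The coefficients are products of $\mathfrak{sl}_2$- and $\mathfrak{osp}(1|2)$-type factors. On the $f_2$ side the relevant commutator behaves like $[e_2,f_2]=h_2$, with odd $f_2$ and $f_2^2\ne0$. The resulting factors look like $\lfloor k/2\rfloor$ or $(\lambda_2-\cdots)$ according to parity, and these are exactly what produce the boundaries $2m_2$ and $\frac12 b_3$. My first approach is a direct computation: apply raising operators $e_2^{b_3}e_1^{b_4}$ to a putative linear relation. This peels off the largest $b_4$ and then the largest $b_3$ by induction, mirroring the $\mathfrak{sp}_4$ argument of \cite{CaoHuang}, while tracking the parity signs from the odd generator.
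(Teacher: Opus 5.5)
\textbf{Overall.} Your architecture matches the paper's. For (i): statistics $b_1,\dots,b_4$ on KN tableaux, weight $\lambda-(b_2+b_4)\alpha_1-(b_1+b_3)\alpha_2$, and a case-by-case inverse. Then $\dim L(\lambda)=|\mathrm{KN}_\lambda(4|1)|$ from the crystal basis reduces (ii) to linear independence, which is settled by a leading-term argument. Two steps, however, fail as you state them.

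\textbf{Part (i): the rule for $0$ is backwards.} In $\mathrm{KN}_\lambda(4|1)$ the letter $0$ may \emph{not} repeat in a row but \emph{may} repeat in a column. The super exterior square is symmetric on the odd vector, so $\varepsilon_0\wedge\varepsilon_0\neq0$. With your rule, shape $(2)$ has $15$ tableaux and shape $(1,1)$ has $9$. But $|H|=\dim L(\lambda)$ is $14$ and $10$ respectively, so no bijection can exist. You also need the forbidden adjacent-column configurations, not only the within-column condition on $1,\overline{1}$. Even with the correct rule for $0$, shape $(2,2)$ has $39$ fillings obeying the row, column and $1,\overline{1}$ conditions, and only removing those configurations brings this down to $35=|H|$.

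\textbf{Part (ii): the leading-term claim is left open, and your tool for it fails.} The whole content of (ii) is the claim that $f_1^{b_4}f_2^{b_3}f_1^{b_2}f_2^{b_1}v_\lambda$ has a leading term indexed by its KN tableau with nonzero coefficient. You leave this as a goal, without fixing a model or an order. The raising-operator peeling does not work. Take $B_4$ to be the largest $b_4$ in a relation and apply $e_1^{B_4}$. Since $[e_1,f_2]=0$ and $e_1v_\lambda=0$, one gets $e_1f_1^{b_2}f_2^{b_1}v_\lambda=b_2(m_1+b_1-b_2+1)f_1^{b_2-1}f_2^{b_1}v_\lambda$, which is nonzero for $1\le b_2\le m_1+b_1$. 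So $e_1$ lowers $b_2$ as well as $b_4$, and terms with smaller $b_4$ survive. The monomials produced also leave $H$: if $b_3=2b_2\le b_2+m_1$, then $f_2^{b_3}f_1^{b_2-1}f_2^{b_1}v_\lambda$ violates $b_3\le 2(b_2-1)$. An induction on Verma systems therefore has nothing to apply to, and $e_2$ likewise lowers $b_1$ along with $b_3$.

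\textbf{What the paper does instead.} It uses no raising operators. It works in $W=V^{\otimes m_1}\otimes(\wedge^2V)^{\otimes m_2}$, whose basis $u(Y)$ is indexed by column-strict tableaux. Tableaux are compared at their first differing entry, reading columns from right to left and each column top to bottom. The paper follows the largest tableau through the stages $f_2^{b_1}$, $f_1^{b_2}$, $f_2^{b_3}$, $f_1^{b_4}$. At each stage that tableau is the KN tableau of the partial product and arises only from the previous leading tableau. Its coefficients are $\lfloor b_1/2\rfloor!$, $b_2!$, $\lfloor b_3/2\rfloor!$, $b_4!$ respectively. The parity signs you worry about are harmless: the copies $f_2^{(p)}$ of $f_2$ acting on different tensor factors anticommute, so $(\sum_p f_2^{(p)})^2=\sum_p (f_2^{(p)})^2$. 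The leading coefficients are thus positive integers, not $\mathfrak{osp}(1|2)$-type factors that might vanish.
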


In Section 2, we introduce the structure of the orthosymplectic Lie superalgebra $\mathfrak{spo}(4|1),$ including the matrix realization and the root system. 
In Section 3,
we introduce the Kashiwara-Nakashima tableaux of $\mathfrak{spo}(4|1).$
Especially, we get a system of inequalities that makes a tableau to be a Kashiwara-Nakashima tableau of $\mathfrak{spo}(4|1)$.
In Section 4, we define the Verma basis of the finite dimensional irreducible module of $\mathfrak{spo}(4|1)$
via the system of inequalities in Section 3.
Then we connect it  to 
the
Kashiwara-Nakashima tableaux of $\mathfrak{spo}(4|1).$
In Section 5, we show the linear independence of the system of Verma vectors.

	\section{Roots and Fundamental Weights of $\mathfrak{spo}(4|1)$}
	In this section, we briefly introduce the matrix realization, the root system, and the fundamental weights of the orthosymplectic Lie superalgebra $\mathfrak{spo}(4|1)$.
	
	In its matrix realization, the orthosymplectic Lie superalgebra $\mathfrak{spo}(4|1)$ \cite{Kac1977} can be defined as the set of all $5 \times 5$ matrices of the following form:
	\begin{equation}\label{matrix}
		\left(
		\begin{array}{cc|cc|c}
			a_{11} & a_{12} & b_{11} & b_{12} & p_1 \\
			a_{21} & a_{22} & b_{12} & b_{22} & p_2 \\
			\hline
			c_{11} & c_{12} & -a_{11} & -a_{21} & q_1 \\
			c_{12} & c_{22} & -a_{12} & -a_{22} & q_2 \\
			\hline
			-q_1 & -q_2 & p_1 & p_2 & 0
		\end{array}
		\right),
	\end{equation}
	where all variables are arbitrary complex numbers. Its even subalgebra $\mathfrak{spo}(4|1)_{\overline{0}} \cong \mathfrak{sp}_4$ consists of the matrices in \eqref{matrix} satisfying $p_1 = p_2 = q_1 = q_2 = 0$, and its odd subspace $\mathfrak{spo}(4|1)_{\overline{1}}$ consists of the matrices in \eqref{matrix} satisfying $a_{11} = a_{12} = a_{21} = a_{22} = b_{11} = b_{12} = b_{22} = c_{11} = c_{12} = c_{22} = 0$.
	
	Let $E_{ij}$ denote the $5 \times 5$ matrix unit with $1$ in the $(i, j)$-entry and $0$ elsewhere. Then the Cartan subalgebra $\mathfrak{h}$ of $\mathfrak{spo}(4|1)$ is spanned by the following diagonal matrices:
	\[ 
	H_1 = E_{11} - E_{33}, \quad H_2 = E_{22} - E_{44}. 
	\]
	Let $\{\epsilon_1, \epsilon_2\}$ be a basis for the dual space $\mathfrak{h}^*$ of $\mathfrak{h}$, satisfying
	\[ 
	\epsilon_i(H_j) = \delta_{ij}, \quad i, j = 1, 2. 
	\]
	Then the root system $\Phi = \Phi_{\bar{0}} \cup \Phi_{\bar{1}}$ of $\mathfrak{spo}(4|1)$ is given by
	\begin{align*}
		\Phi_{\bar{0}} &= \{\pm \epsilon_i\pm\epsilon_j \mid 1 \leq i < j \leq 2\} \cup \{\pm2\epsilon_i \mid 1\leq i\leq 2\},\\
		\Phi_{\bar{1}} &= \{\pm\epsilon_i \mid 1\leq i\leq 2\},
	\end{align*}
	where $\Phi_{\bar{0}}$ and $\Phi_{\bar{1}}$ denote the sets of even and odd roots, respectively. We choose the simple root system to be
	\[ 
	\Pi = \{ \alpha_1 = \epsilon_1 - \epsilon_2, \; \alpha_2 = \epsilon_2 \},
	\]
	where $\alpha_1$ is an even simple root and $\alpha_2$ is an odd simple root. The sets of positive even and positive odd roots are respectively given by:
	\begin{align*}
		\Phi_{\bar{0}}^+ &= \{\epsilon_1 \pm \epsilon_2, 2\epsilon_1, 2\epsilon_2\},\\
		\Phi_{\bar{1}}^+ &= \{\epsilon_1, \epsilon_2\}.
	\end{align*}
	
	The standard Dynkin diagram of $\mathfrak{spo}(4|1)$ is shown in the figure below.
	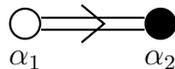
\begin{figure}[htbp]
		\centering
		\begin{tikzpicture}
			\tikzstyle{white node}=[circle, draw=black, thick, minimum size=0.4cm, inner sep=0pt, fill=white]
			\tikzstyle{black node}=[circle, draw=black, thick, minimum size=0.4cm, inner sep=0pt, fill=black]
			
			\node[white node, label=below:$\alpha_1$] (n1) at (0, 0) {};
			\node[black node, label=below:$\alpha_2$] (n2) at (1.8, 0) {};
			
			\draw[thick] ([yshift=0.08cm]n1.east) -- ([yshift=0.08cm]n2.west);
			\draw[thick] ([yshift=-0.08cm]n1.east) -- ([yshift=-0.08cm]n2.west);
			
			\draw[thick] (0.75, 0.25) -- (1.05, 0) -- (0.75, -0.25);
			
		\end{tikzpicture}
		\caption{The standard Dynkin diagram of the Lie superalgebra $\mathfrak{spo}(4|1)$}
	\end{figure}
	
	Let $\omega_1 = \epsilon_1$ and $\omega_2 = \frac{1}{2}(\epsilon_1 + \epsilon_2)$ denote the fundamental weights. Then we have the following theorem.
	
	\begin{theorem}\textup{\cite{Kac1978, Shader1999}}
		The weight $\lambda = a_1\omega_1 + a_2\omega_2$ is a highest weight for a finite dimensional irreducible $\mathfrak{spo}(4|1)$-module if and only if $a_1$ and $\frac{1}{2}a_2$ are non-negative integers.
	\end{theorem}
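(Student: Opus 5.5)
The statement to prove is the cited classification (Kac, Shader): $\lambda=a_1\omega_1+a_2\omega_2$ is the highest weight of a finite dimensional irreducible $\mathfrak{spo}(4|1)$-module if and only if $a_1$ and $\frac12 a_2$ are non-negative integers. I would use the standard reduction to rank-one subalgebras, attached to the even simple root $\alpha_1=\epsilon_1-\epsilon_2$ and to the odd root $\alpha_2=\epsilon_2$, whose root space generates a copy of $\mathfrak{osp}(1|2)$ because $2\alpha_2=2\epsilon_2$ is an even root. Write $\lambda=(\lambda_1,\lambda_2)$ in the $\epsilon$-basis. Then
\[
a_1=\lambda(H_1-H_2)=\lambda_1-\lambda_2,\qquad a_2=2\lambda_2 ,
\]
so the condition reads $\lambda_1-\lambda_2\in\mathbb Z_{\ge0}$ and $\lambda_2\in\mathbb Z_{\ge0}$.

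\textbf{Necessity.} Let $L(\lambda)$ be finite dimensional with highest weight vector $v_\lambda$. First restrict to the $\mathfrak{sl}_2$-triple spanned by $e_1=E_{12}-E_{43}$, $f_1=E_{21}-E_{34}$ and $h_1=H_1-H_2$. The vector $v_\lambda$ is $e_1$-primitive and generates a finite dimensional $\mathfrak{sl}_2$-module, so $a_1=\lambda(h_1)\in\mathbb Z_{\ge0}$.

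Next restrict to the $\mathfrak{osp}(1|2)$ spanned by the odd vectors $e_2$, $f_2$ of weights $\pm\epsilon_2$, the even vectors $e_2^2$, $f_2^2$ of weights $\pm2\epsilon_2$, and $H_2$. The even part is the $\mathfrak{sl}_2$-triple $\{e_2^2,f_2^2,[e_2^2,f_2^2]\}$, whose Cartan element is a multiple of $H_2$. The relevant coroot is that of the even root $2\epsilon_2$, which pairs with $\lambda$ to give $\lambda_2$. Applying $\mathfrak{sl}_2$ theory to $v_\lambda$ therefore gives $\lambda_2\in\mathbb Z_{\ge0}$, i.e.\ $\frac12 a_2\in\mathbb Z_{\ge0}$. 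Note that the bare odd reflection $\lambda(H_2)$ would only force integrality; the halving comes from using $2\epsilon_2$. This is also why the paper writes $\lambda=m_1\omega_1+2m_2\omega_2$.

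\textbf{Sufficiency.} Assume $m_1=a_1$ and $m_2=\frac12 a_2$ are non-negative integers. Let $K(\lambda)=U(\mathfrak g)\otimes_{U(\mathfrak g_{\bar0}+\mathfrak g_{\bar1}^+)}L_0(\lambda)$, where $L_0(\lambda)$ is the finite dimensional irreducible $\mathfrak{sp}_4$-module. It exists because $\lambda$ is $\mathfrak{sp}_4$-dominant integral: the simple coroot pairings with $\epsilon_1-\epsilon_2$ and $2\epsilon_2$ are $\lambda_1-\lambda_2$ and $\lambda_2$. By PBW, $K(\lambda)\cong\Lambda(\mathfrak g_{\bar1}^-)\otimes L_0(\lambda)$ as a vector space, which has dimension $4\dim L_0(\lambda)$. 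Its irreducible quotient $L(\lambda)$ is finite dimensional with highest weight $\lambda$.

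The obstacle is that this construction needs $\mathfrak g_{\bar1}^+$ to be a subalgebra acting trivially, and for $\mathfrak{spo}(2n|1)$ it is not. Here $[\mathfrak g_{\bar1}^+,\mathfrak g_{\bar1}^+]$ contains root vectors for $2\epsilon_i$ and $\epsilon_1+\epsilon_2$, and the $\mathbb Z$-grading is not of type I. The fix is to induce instead from the parabolic subalgebra $\mathfrak p=\mathfrak g_0\oplus\mathfrak g_{>0}$. Here the grading is by the coefficient of $\epsilon_1+\epsilon_2$, so $\mathfrak g_0=\mathfrak{gl}_2\oplus0$ and $\mathfrak g_{>0}$ consists of the roots $\epsilon_1,\epsilon_2,2\epsilon_1,2\epsilon_2,\epsilon_1+\epsilon_2$.

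Equivalently, and more cleanly, I would realize modules by tensor powers. The natural module $\mathbb C^{4|1}$ has highest weight $\epsilon_1=\omega_1$. Its exterior square has the vector $e_1\wedge e_2$ of weight $\epsilon_1+\epsilon_2=2\omega_2$, which is highest since it is killed by the positive root vectors. Inside
\[
(\mathbb C^{4|1})^{\otimes m_1}\otimes(\Lambda^2\mathbb C^{4|1})^{\otimes m_2}
\]
the vector $e_1^{\otimes m_1}\otimes(e_1\wedge e_2)^{\otimes m_2}$ is a highest weight vector of weight $m_1\omega_1+2m_2\omega_2$. The submodule it generates is finite dimensional, and its irreducible quotient is $L(\lambda)$. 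This construction avoids the grading issue entirely, so I would take it as the main sufficiency argument.
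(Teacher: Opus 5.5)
Your argument is correct. It necessarily follows a different route from the paper's, because the paper gives no proof of this statement and only quotes the classification from Kac and Shader.

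\textbf{Necessity.} Your necessity step is the standard rank-one reduction. The vector $v_\lambda$ is primitive for the $\mathfrak{sl}_2$-triple $(E_{12}-E_{43},\,E_{21}-E_{34},\,H_1-H_2)$. It is also primitive for the triple $(E_{24},\,E_{42},\,H_2)$ of the even root $2\epsilon_2$, where $[E_{24},E_{42}]=H_2$. These give $\lambda_1-\lambda_2\in\mathbb Z_{\ge0}$ and $\lambda_2\in\mathbb Z_{\ge0}$, which is exactly the condition in the statement.

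\textbf{Sufficiency.} Your sufficiency step is the model $W=V^{\otimes m_1}\otimes(\wedge^2V)^{\otimes m_2}$, which the paper itself uses in Section 5. So your proof makes the paper self-contained on this point, and it is slightly more economical. You pass to the simple quotient of $U(\mathfrak g)v_\lambda$. The paper instead asserts that the cyclic submodule is already $L(\lambda)$, which tacitly uses complete reducibility of finite-dimensional $\mathfrak{osp}(1|2n)$-modules.

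\textbf{Comparison.} The cited general theory buys uniformity over all basic classical superalgebras, including type I ones, where isotropic odd roots impose extra conditions. Your argument is specific to this algebra but entirely elementary.

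\textbf{Clean-up.} When you write it up, delete the Kac-module paragraph and the parabolic-induction ``fix''.
\begin{itemize}
\item The Kac module is not defined here. $\mathfrak g_{\bar1}$ is the irreducible natural $\mathfrak{sp}_4$-module; for instance $[E_{31},E_{15}+E_{53}]=E_{35}-E_{51}$ is the root vector of $-\epsilon_1$. So $\mathfrak g_{\bar0}+\mathfrak g_{\bar1}^+$ is not a subalgebra.
\item Parabolic induction from $\mathfrak g_0\oplus\mathfrak g_{>0}$ produces an infinite-dimensional generalized Verma module. Its simple top is finite dimensional only after an integrability argument, which you do not give.
\end{itemize}
Neither is needed once the tensor construction is in place.

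Your remark about the ``bare odd reflection'' is muddled and can be dropped. The number $\lambda(H_2)=\lambda_2$ is exactly what the $2\epsilon_2$-triple controls. The factor $\tfrac12$ in the statement only reflects the normalization $2H_2$ of the odd coroot, chosen so that $\omega_2(2H_2)=1$. Finally, avoid using $e_1,e_2$ both for root vectors and for basis vectors of $\mathbb C^{4|1}$.
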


	\section{Kashiwara-Nakashima Tableaux of $\mathfrak{spo}(4|1)$}
	A partition $\lambda$ of a positive integer $n$ is a $k$-tuple $\lambda=(\lambda_1,\ldots,\lambda_k)$, where $\lambda_1\geq\lambda_2\geq\cdots\geq\lambda_k > 0$ and $|\lambda|=\sum_{i = 1}^k\lambda_i=n$. The length of $\lambda$ is $\ell(\lambda)=k$, and the Young diagram of shape $\lambda$ is given by arranging $n$ boxes in $k$ left-justified rows with $\lambda_i$ boxes in the $i$th row. Figure~\ref{Young diagram} shows a Young diagram of shape $(4,3,1)$.
	
	A semistandard Young tableau of shape $\lambda$ is a filling of the Young diagram of shape $\lambda$ with positive integers such that the entries are weakly increasing across each row and strictly increasing down each column. Figure~\ref{Young tableau} shows a semistandard Young tableau.
	
	\begin{figure}[h]
		\centering
		\begin{minipage}{0.45\textwidth}
			\centering
			\begin{ytableau}
				~ & ~ & ~ & ~ \\
				~ & ~ & ~ \\
				~
			\end{ytableau}
			\caption{Young diagram}
			\label{Young diagram}
		\end{minipage}
		\begin{minipage}{0.45\textwidth}
			\centering
			\begin{ytableau}
				1 & 2 & 3 & 3 \\
				3 & 3 & 4 \\
				4
			\end{ytableau}
			\caption{Young tableau}
			\label{Young tableau}
		\end{minipage}
	\end{figure}
	
	The finite dimensional irreducible representations of $\mathfrak{spo}(4|1)$ are parametrized by their highest weights $\lambda$, which have the form
	\[
	\lambda = m_1\omega_1 + 2m_2\omega_2 = \lambda_1\epsilon_1 + \lambda_2\epsilon_2, \quad \text{with } m_1, m_2 \in \mathbb{Z}_{\geq 0},
	\]
	where $\lambda_1 = m_1 + m_2$ and $\lambda_2 = m_2$. Hence, $\lambda$ corresponds to a partition $(\lambda_1 , \lambda_2)$ with at most two parts.
	
	Kashiwara-Nakashima tableaux (KN tableaux for short) of $\mathfrak{spo}(4|1)$ have entries from the set $\mathcal{N}=\{1, 2, 0, \overline{2}, \overline{1}\}$ with ordering
	\[1 < 2 < 0< \overline{2} < \overline{1}.\]
	
	\begin{definition}\label{def:ospKN}\cite{LiuYang2010}
		Let $\lambda$ be a partition with $\ell(\lambda) \leq 2$. A KN tableau $T$ of $\mathfrak{spo}(4|1)$ of shape $\lambda$ is a filling of the Young diagram of shape $\lambda$ with entries from $\mathcal{N}$, satisfying the following conditions:
		\begin{enumerate}[label=(\roman*), itemsep=0pt]
			\item The entries in $T$ are weakly increasing across each row, with $0$ not allowed to repeat; the entries are strictly increasing down each column, but $0$ may repeat.
			\item The entries $1$ and $\overline{1}$ do not appear in the same column.
			\item If \(T\) has two adjacent columns, then the following two configurations are forbidden:
			\vspace{-1.5em}
			$$\left.\begin{array}{lr}
				&2 \\
				\\
			\end{array}
			\right|  \begin{array}{r}  \cr  \cr \overline{2} \cr  \cr  \end{array}\shiftnoarrow{-0.2}{,} \quad \quad
			\left.\begin{array}{r}
				0  \\
				\\
			\end{array}
			\right|  \begin{array}{r}  \cr
				\overline{2} \cr
			\end{array}\shiftnoarrow{-0.2}{.}$$
		\end{enumerate}
		We denote by $\mathrm{KN}_{\lambda}(4|1)$ the set of KN tableaux of $\mathfrak{spo}(4|1)$ of shape $\lambda$.
	\end{definition}
	
	For a KN tableau $T$ of $\mathfrak{spo}(4|1)$, we define its weight to be
	\begin{align}
		\mathrm{wt}(T) = \sum_{i = 1}^{2} (k_i - k_{\overline{i}}) \epsilon_i,
	\end{align}
	where $k_i$ (respectively, $k_{\overline{i}}$) is the number of $i$'s (respectively, $\overline{i}$'s) appearing in $T$.
	
	Let $L(\lambda)$ be the finite dimensional irreducible representation of $\mathfrak{spo}(4|1)$ with highest weight $\lambda$, where $\lambda = (m_1 + m_2)\epsilon_1 + m_2\epsilon_2$ and $m_1, m_2 \in \mathbb{Z}_{\geq 0}$.
	
	\begin{proposition}\label{dimension}
		$\dim L(\lambda) = |\mathrm{KN}_{\lambda}(4|1)|.$
	\end{proposition}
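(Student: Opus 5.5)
Two routes are available. One is to cite the crystal-theoretic result of Liu--Yang \cite{LiuYang2010}: the set $\mathrm{KN}_{\lambda}(4|1)$ carries the crystal of $L(\lambda)$, so its character, and hence its cardinality, equals that of $L(\lambda)$. To keep the argument self-contained I would instead compare explicit counts. I would compute $\dim L(\lambda)$ from a known character or dimension formula, count the KN tableaux directly, and check that the two numbers agree.

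\textbf{Step 1: the dimension of $L(\lambda)$.} The superalgebra $\mathfrak{spo}(4|1)=\mathfrak{osp}(1|4)$ is of type $B(0,2)$. Its finite dimensional representations are completely reducible, and there are no atypicality issues. Hence the character of $L(\lambda)$ is given by the Weyl-type formula with the odd correction factor $\prod_{\alpha\in\Phi_{\bar1}^+}(e^{\alpha/2}+e^{-\alpha/2})$ in the numerator. Equivalently, $\operatorname{ch}L(\lambda)$ equals the character of the $\mathfrak{so}_5$ module with highest weight $\lambda$, via the well-known correspondence between $\mathfrak{osp}(1|2n)$ and $\mathfrak{so}_{2n+1}$. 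Since $\lambda=(\lambda_1,\lambda_2)$ is integral, the Weyl dimension formula for $B_2$ gives
\[
\dim L(\lambda)=\tfrac{1}{6}(2\lambda_1+3)(2\lambda_2+1)(\lambda_1-\lambda_2+1)(\lambda_1+\lambda_2+2).
\]
I would state this as a cited fact from \cite{Kac1978}.

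\textbf{Step 2: counting KN tableaux.} First I would translate Definition~\ref{def:ospKN} into a system of inequalities. A tableau of shape $(\lambda_1,\lambda_2)$ is determined by the multiplicities of each letter in row 1 and in row 2. Row 1 is weakly increasing with at most one $0$. Row 2 has entries $\ge 2$, and it can contain $\overline1$ only above nothing, since it is the bottom row. Column strictness means that at each position the entry of row 2 exceeds the entry of row 1, except that a $0$ may sit under a $0$. Condition (ii) forbids a column with $1$ on top and $\overline1$ below. Condition (iii) constrains consecutive columns: a $2$ on top in column $j$ together with $\overline2$ at the bottom of column $j+1$ is forbidden, as is a $0$ on top in column $j$ together with $\overline2$ at the top of column $j+1$. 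I would encode this through the numbers $a_x$ of $x$'s in row 1 and $c_x$ of $x$'s in row 2, written as a chain of inequalities. This is the system the paper later uses in Section 4 to define $H$. I would then sum over the free parameters. The natural order is to fix row 2 first, then count the compatible row-1 fillings, handling separately the cases where a $0$ is present in row 1 and where it is not.

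\textbf{Step 3: comparison.} The nested sum should collapse to the polynomial in Step 1. If summing in closed form is messy, there is a cleaner alternative: prove that both sides satisfy the same recursion in $\lambda_1$ for fixed $\lambda_2$, with equal initial values. For instance, check $\lambda=(1,0)$, giving dimension $5$ and KN tableaux $1,2,0,\overline2,\overline1$, and $\lambda=(1,1)$, giving dimension $10$. Since both sides are polynomials of degree at most $4$ in $(\lambda_1,\lambda_2)$, agreement on enough values would also suffice.

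The main obstacle is Step 2. The adjacency conditions in (iii), together with the special behaviour of $0$ (which may repeat down a column but not along a row), make the case analysis error-prone. The first thing I would try is to organize it by the number of columns whose top entry is $0$ and whose bottom entry is $\overline2$, since these are exactly the configurations the forbidden patterns control.
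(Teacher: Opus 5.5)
Your opening sentence is exactly the paper's proof. The paper cites Liu--Yang for the fact that $\mathrm{KN}_{\lambda}(4|1)$ labels the crystal basis of $L_q(\lambda)$, and then uses $\dim L_q(\lambda)=\dim L(\lambda)$. The self-contained route you actually adopt is different.

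\textbf{Step 1 is sound.} With $\rho=\rho_{\bar0}-\rho_{\bar1}=(\tfrac32,\tfrac12)=\rho_{B_2}$, the odd factors cancel the $C_2$ factors $e^{\epsilon_i}-e^{-\epsilon_i}$ down to the $B_2$ ones. So $\operatorname{ch}L(\lambda)$ is the $\mathfrak{so}_5$ character, and your dimension formula is right. For example, it gives $105$ for $\lambda=(3,2)$, matching the $21\times 5$ table of Example~\ref{example}.

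\textbf{The gap.} Step 2 is the whole content of this route, and you do not carry it out. The Step 3 shortcut is also not available for free. Interpolating from finitely many values presupposes that the number of KN tableaux is a single polynomial of degree $\le 4$ on the entire cone $\lambda_1\ge\lambda_2\ge 0$. A lattice-point count involving minima, halves and an ``at most one $0$ per row'' condition is a priori only piecewise quasi-polynomial, and you give no argument excluding that. No recursion in $\lambda_1$ is actually specified either.

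\textbf{A misreading that would break Step 2.} You read the second forbidden configuration in (iii) as having the $\overline2$ in the top row. In the paper's picture the $\overline2$ sits in the \emph{lower} row of column $j+1$. So the forbidden pair is $T(1,j)=0$, $T(2,j+1)=\overline2$, the same top-left/bottom-right pattern as the first configuration. Your version forbids the row pattern $0\,\overline2$. Figure~\ref{OSP-KN3} explicitly allows this pattern, as does the tableau with first row $1\,0\,\overline2$ over $2\,0$ in Example~\ref{example}. Your reading gives $13$ tableaux for $\lambda=(2,0)$ and $32$ for $\lambda=(2,2)$, against the true values $14$ and $35$.

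\textbf{How to close the gap.} The cleanest fix is to count $H$ instead and invoke Theorem~\ref{KN-correspondence}, whose proof does not use this proposition.
\begin{itemize}
\item For fixed $b_2$, the number of admissible $(b_3,b_4)$ is $(b_2+1)^2$ if $b_2\le m_1$, and $(m_1+1)(b_2+1)$ if $b_2>m_1$.
\item Summing over $b_2\le m_1+b_1$ and $b_1\le 2m_2$ gives $(m_1+1)\binom{m_1+2m_2+3}{3}-(m_1+2m_2+2)\binom{m_1+2}{3}$.
\item This factors as $\tfrac16(m_1+1)(2m_2+1)(m_1+2m_2+2)(2m_1+2m_2+3)$, which is your Weyl formula with $\lambda=(m_1+m_2,m_2)$.
\end{itemize}
That yields a crystal-free proof, at the cost of resting on the paper's case-by-case bijection rather than on Liu--Yang.
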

	\begin{proof}
		In \cite{LiuYang2010}, the KN tableaux of $\mathfrak{spo}(4|1)$ of shape $\lambda$ are used to label the elements of the crystal basis of the $U_q(\mathfrak{spo}(4|1))$-module $L_q(\lambda)$, which is a quantum deformation of $L(\lambda)$. Since there is a one-to-one correspondence between the elements of the crystal basis and the KN tableaux in $\mathrm{KN}_{\lambda}(4|1)$, the number of such tableaux equals the dimension of the representation $L_q(\lambda)$.
		
		Furthermore, because $L_q(\lambda)$ is a quantum deformation of $L(\lambda)$, their dimensions are equal, i.e., $\dim L(\lambda) = \dim L_q(\lambda)$. Consequently, the number of KN tableaux in $\mathrm{KN}_{\lambda}(4|1)$ is exactly the dimension of $L(\lambda)$.
	\end{proof}
	
	Let $\lambda=(m_{1}+m_{2}, m_{2})$ and $T\in \mathrm{KN}_{\lambda}(4|1)$, where $m_{1}, m_{2}\in \mathbb{Z}_{\geq0}$. We define:
	\begin{enumerate}[label=(\roman*), itemsep=0pt]
		\item $b_1$ as the sum of twice the number of entries in the second row of $T$ that are greater than or equal to $\overline{2}$ and the number of entries in the second row equal to $0$;
		\item $b_2$ as the sum of the number of entries in the first row strictly greater than $1$ and the number of entries in the second row strictly greater than $\overline{2}$;
		\item $b_3$ as the sum of twice the number of entries in the first row that are greater than or equal to $\overline{2}$ and the number of entries in the first row equal to $0$;
		\item $b_4$ as the number of entries in the first row strictly greater than $\overline{2}$.
	\end{enumerate}
	From the definitions of $b_i$ ($i=1,2,3,4$), we first obtain the following inequalities:
	\[ 
	0\leq b_{1}\leq 2m_{2}, \quad 0\leq b_{2}\leq m_{1}+b_{1}, \quad 0\leq b_{3}\leq 2b_{2}, \quad 0\leq b_{4}\leq m_{1}. 
	\]
	
	By the definition of $\mathrm{KN}_{\lambda}(4|1)$, the tableau $T$ falls into the following cases.
	
	\begin{figure}[H]
		\begin{center}
			\begin{tikzpicture}[scale=0.7, every node/.style={font=\large}]
				
				\draw (0,0) rectangle ++(4,1);
				\node at (0.5,0.5) {1};
				\node at (1.5,0.5) {$\cdots$};
				\node at (2.5,0.5) {1};
				
				\draw (3,0) rectangle ++(5,1);
				\node at (3.5,0.5) {1};
				\node at (4.5,0.5) {1};
				\node at (5.5,0.5) {$\cdots$};
				\node at (6.5,0.5) {1};
				\node at (7.5,0.5) {1};
				
				\draw (8,0) rectangle ++(10,1);
				\node at (8.5,0.5) {2};
				\node at (9.5,0.5) {$\cdots$};
				\node at (10.5,0.5) {2};
				\node at (11.5,0.5) {0};
				\node at (12.5,0.5) {$\overline{2}$};
				\node at (13.5,0.5) {$\cdots$};
				\node at (14.5,0.5) {$\overline{2}$};
				\node at (15.5,0.5) {$\overline{1}$};
				\node at (16.5,0.5) {$\cdots$};
				\node at (17.5,0.5) {$\overline{1}$};
				
				\draw (0,-1) rectangle ++(4,1);
				\node at (0.5,-0.5) {2};
				\node at (1.5,-0.5) {$\cdots$};
				\node at (2.5,-0.5) {2};
				
				\draw (3,-1) rectangle ++(5,1);
				\node at (3.5,-0.5) {0};
				\node at (4.5,-0.5) {$\overline{2}$};
				\node at (5.5,-0.5) {$\cdots$};
				\node at (6.5,-0.5) {$\overline{2}$};
				\node at (7.5,-0.5) {$\overline{2}$};
				
				\draw (8,-1) rectangle ++(6,1);
				\node at (9,-0.5) {$\overline{1}$};
				\node at (11,-0.5) {$\cdots$};
				\node at (13,-0.5) {$\overline{1}$};

			\end{tikzpicture}
			\caption{} 
			\label{OSP-KN1}
		\end{center}
	\end{figure}
	\begin{figure}[H]
		\begin{center}
			\begin{tikzpicture}[scale=0.7, every node/.style={font=\large}]
				
				\draw (0,0) rectangle ++(4,1);
				\node at (0.5,0.5) {1};
				\node at (1.5,0.5) {$\cdots$};
				\node at (2.5,0.5) {1};
				
				\draw (3,0) rectangle ++(5,1);
				\node at (3.5,0.5) {1};
				\node at (4.5,0.5) {1};
				\node at (5.5,0.5) {$\cdots$};
				\node at (6.5,0.5) {1};
				\node at (7.5,0.5) {2};
				
				\draw (8,0) rectangle ++(10,1);
				\node at (8.5,0.5) {2};
				\node at (9.5,0.5) {$\cdots$};
				\node at (10.5,0.5) {2};
				\node at (11.5,0.5) {0};
				\node at (12.5,0.5) {$\overline{2}$};
				\node at (13.5,0.5) {$\cdots$};
				\node at (14.5,0.5) {$\overline{2}$};
				\node at (15.5,0.5) {$\overline{1}$};
				\node at (16.5,0.5) {$\cdots$};
				\node at (17.5,0.5) {$\overline{1}$};
				
				\draw (0,-1) rectangle ++(4,1);
				\node at (0.5,-0.5) {2};
				\node at (1.5,-0.5) {$\cdots$};
				\node at (2.5,-0.5) {2};
				
				\draw (3,-1) rectangle ++(5,1);
				\node at (3.5,-0.5) {0};
				\node at (4.5,-0.5) {$\overline{2}$};
				\node at (5.5,-0.5) {$\cdots$};
				\node at (6.5,-0.5) {$\overline{2}$};
				\node at (7.5,-0.5) {$\overline{2}$};
				
				\draw (8,-1) rectangle ++(6,1);
				\node at (9,-0.5) {$\overline{1}$};
				\node at (11,-0.5) {$\cdots$};
				\node at (13,-0.5) {$\overline{1}$};

			\end{tikzpicture}
			\caption{} 
			\label{OSP-KN2}
		\end{center}
	\end{figure}
	\begin{figure}[H]
		\begin{center}
			\begin{tikzpicture}[scale=0.7, every node/.style={font=\large}]
				
				\draw (0,0) rectangle ++(4,1);
				\node at (0.5,0.5) {1};
				\node at (1.5,0.5) {$\cdots$};
				\node at (2.5,0.5) {1};
				
				\draw (3,0) rectangle ++(5,1);
				\node at (3.5,0.5) {1};
				\node at (4.5,0.5) {1};
				\node at (5.5,0.5) {$\cdots$};
				\node at (6.5,0.5) {1};
				\node at (7.5,0.5) {0};
				
				\draw (8,0) rectangle ++(10,1);
				\node at (9,0.5) {$\overline{2}$};
				\node at (10,0.5) {$\cdots$};
				\node at (11,0.5) {$\overline{2}$};
				
				\node at (12.5,0.5) {$\overline{2}$};
				\node at (13.5,0.5) {$\cdots$};
				\node at (14.5,0.5) {$\overline{2}$};
				\node at (15.5,0.5) {$\overline{1}$};
				\node at (16.5,0.5) {$\cdots$};
				\node at (17.5,0.5) {$\overline{1}$};
				
				\draw (0,-1) rectangle ++(4,1);
				\node at (0.5,-0.5) {2};
				\node at (1.5,-0.5) {$\cdots$};
				\node at (2.5,-0.5) {2};
				
				\draw (3,-1) rectangle ++(5,1);
				\node at (3.5,-0.5) {0};
				\node at (4.5,-0.5) {$\overline{2}$};
				\node at (5.5,-0.5) {$\cdots$};
				\node at (6.5,-0.5) {$\overline{2}$};
				\node at (7.5,-0.5) {$\overline{2}$};
				
				\draw (8,-1) rectangle ++(6,1);
				\node at (9,-0.5) {$\overline{1}$};
				\node at (11,-0.5) {$\cdots$};
				\node at (13,-0.5) {$\overline{1}$};

			\end{tikzpicture}
			\caption{} 
			\label{OSP-KN3}
		\end{center}
	\end{figure}
	\begin{figure}[H]
		\begin{center}
			\begin{tikzpicture}[scale=0.7, every node/.style={font=\large}]
				
				\draw (0,0) rectangle ++(4,1);
				\node at (0.5,0.5) {1};
				\node at (1.5,0.5) {$\cdots$};
				\node at (2.5,0.5) {1};
				
				\draw (3,0) rectangle ++(4,1);
				\node at (3.5,0.5) {1};
				\node at (4.5,0.5) {1};
				\node at (5.5,0.5) {$\cdots$};
				\node at (6.5,0.5) {1};
				
				\draw (7,0) rectangle ++(11.6,1);
				\node at (7.4,0.5) {1};
				\node at (8.3,0.5) {$\cdots$};
				\node at (9.2,0.5) {1};
				\node at (10.1,0.5) {2};
				\node at (11,0.5) {$\cdots$};
				\node at (11.9,0.5) {2};
				\node at (12.8,0.5) {0};
				\node at (13.7,0.5) {$\overline{2}$};
				\node at (14.6,0.5) {$\cdots$};
				\node at (15.5,0.5) {$\overline{2}$};
				\node at (16.4,0.5) {$\overline{1}$};
				\node at (17.3,0.5) {$\cdots$};
				\node at (18.2,0.5) {$\overline{1}$};
				
				\draw (0,-1) rectangle ++(4,1);
				\node at (0.5,-0.5) {2};
				\node at (1.5,-0.5) {$\cdots$};
				\node at (2.5,-0.5) {2};
				
				\draw (3,-1) rectangle ++(4,1);
				\node at (3.5,-0.5) {0};
				\node at (4.5,-0.5) {$\overline{2}$};
				\node at (5.5,-0.5) {$\cdots$};
				\node at (6.5,-0.5) {$\overline{2}$};
			\end{tikzpicture}
			\caption{} 
			\label{OSP-KN4}
		\end{center}
	\end{figure}
	\begin{figure}[H]
		\begin{center}
			\begin{tikzpicture}[scale=0.7, every node/.style={font=\large}]
				
				\draw (0,0) rectangle ++(4,1);
				\node at (0.5,0.5) {1};
				\node at (1.5,0.5) {$\cdots$};
				\node at (2.5,0.5) {1};
				
				\draw (3,0) rectangle ++(1,1);
				\node at (3.5,0.5) {2};
				
				\draw (4,0) rectangle ++(11,1);
				\node at (5,0.5) {2};
				\node at (6,0.5) {$\cdots$};
				\node at (7,0.5) {2};
				\node at (8,0.5) {0};
				\node at (9,0.5) {$\overline{2}$};
				\node at (10,0.5) {$\cdots$};
				\node at (11,0.5) {$\overline{2}$};
				\node at (12,0.5) {$\overline{1}$};
				\node at (13,0.5) {$\cdots$};
				\node at (14,0.5) {$\overline{1}$};
				
				\draw (0,-1) rectangle ++(4,1);
				\node at (0.5,-0.5) {2};
				\node at (1.5,-0.5) {$\cdots$};
				\node at (2.5,-0.5) {2};
				
				\draw (3,-1) rectangle ++(4,1);
				\node at (3.5,-0.5) {0};
				\node at (4.5,-0.5) {$\overline{1}$};
				\node at (5.5,-0.5) {$\cdots$};
				\node at (6.5,-0.5) {$\overline{1}$};
			\end{tikzpicture}
			\caption{} 
			\label{OSP-KN5}
		\end{center}
	\end{figure}
	\begin{figure}[H]
		\begin{center}
			\begin{tikzpicture}[scale=0.7, every node/.style={font=\large}]
				
				\draw (0,0) rectangle ++(4,1);
				\node at (0.5,0.5) {1};
				\node at (1.5,0.5) {$\cdots$};
				\node at (2.5,0.5) {1};
				
				\draw (3,0) rectangle ++(1,1);
				\node at (3.5,0.5) {0};
				
				\draw (4,0) rectangle ++(10,1);
				\node at (5,0.5) {$\overline{2}$};
				\node at (6.5,0.5) {$\cdots$};
				\node at (8,0.5) {$\overline{2}$};
				\node at (9.5,0.5) {$\overline{1}$};
				\node at (11,0.5) {$\cdots$};
				\node at (12.5,0.5) {$\overline{1}$};
				
				\draw (0,-1) rectangle ++(4,1);
				\node at (0.5,-0.5) {2};
				\node at (1.5,-0.5) {$\cdots$};
				\node at (2.5,-0.5) {2};
				
				\draw (3,-1) rectangle ++(4,1);
				\node at (3.5,-0.5) {0};
				\node at (4.5,-0.5) {$\overline{1}$};
				\node at (5.5,-0.5) {$\cdots$};
				\node at (6.5,-0.5) {$\overline{1}$};
			\end{tikzpicture}
			\caption{} 
			\label{OSP-KN6}
		\end{center}
	\end{figure}
	
	\begin{remark}
		In these tableaux, the column containing $0$ may be absent.
	\end{remark}
	
	If $T$ is as shown in Figure~\ref{OSP-KN1}, then
	\[
	0 \leq b_3 \leq b_2 + m_1 \leq 2b_2.
	\]
	
	If the entry $0$ appears in the first row of Figure~\ref{OSP-KN1}, then
	\[
	0 \leq b_4 \leq \min\left\{m_1, \frac{1}{2}(b_3 - 1)\right\} = \min\left\{m_1, \left\lfloor \frac{1}{2}b_3 \right\rfloor \right\},
	\]
	where $\lfloor \cdot \rfloor$ denotes the floor function.
	
	If the entry $0$ does not appear in the first row of Figure~\ref{OSP-KN1}, then
	\[
	0 \leq b_4 \leq \min\left\{m_1, \frac{1}{2}b_3\right\}.
	\]
	
	If $T$ falls into other cases, a similar analysis can be carried out. Moreover, since $b_4 \in \mathbb{Z}_{\geq 0}$, we always have
	\[
	0 \leq b_3 \leq \min\{b_2 + m_1,\, 2b_2\}, \quad
	0 \leq b_4 \leq \min\left\{m_1,\, \frac{1}{2}b_3\right\}.
	\]
	
	Therefore, for any $T \in \mathrm{KN}_{\lambda}(4|1)$, we have
	\begin{equation} \label{不等式osp}
		\begin{aligned}
			0 &\leq b_1 \leq 2m_2, \\
			0 &\leq b_2 \leq m_1 + b_1, \\
			0 &\leq b_3 \leq \min\{b_2 + m_1,\, 2b_2\}, \\
			0 &\leq b_4 \leq \min\{m_1,\, \tfrac{1}{2}b_3\}.
		\end{aligned}
	\end{equation}
	
	\section{Verma Vectors of Irreducible Representations of $\mathfrak{spo}(4|1)$}
	In this section, we directly apply the inequalities obtained in the previous section to construct Verma vectors of the finite dimensional irreducible representations of $\mathfrak{spo}(4|1)$. Furthermore, we establish a weight-preserving bijection between the Verma vectors and the KN tableaux of $\mathfrak{spo}(4|1)$.
	
	Let $L(\lambda)$ be the finite dimensional irreducible representation of $\mathfrak{spo}(4|1)$ with highest weight $\lambda = (m_1 + m_2)\epsilon_1 + m_2\epsilon_2$, where $m_1, m_2 \in \mathbb{Z}_{\geq 0}$. Let $v_\lambda$ be a nonzero highest weight vector in $L(\lambda)$.
	
	\begin{definition}
		The weight vector
		\begin{equation}\label{Verma vector}
			f_1^{b_4} f_2^{b_3} f_1^{b_2} f_2^{b_1} v_\lambda
		\end{equation}
		is called a Verma vector of the irreducible representation $L(\lambda)$ of $\mathfrak{spo}(4|1)$, where the parameters satisfy the following constraints:
		\begin{equation} \label{verma inequalities}
			\begin{aligned}
				0 &\leq b_1 \leq 2m_2, \\
				0 &\leq b_2 \leq m_1 + b_1, \\
				0 &\leq b_3 \leq \min\{b_2 + m_1,\, 2b_2\}, \\
				0 &\leq b_4 \leq \min\{m_1,\, \tfrac{1}{2}b_3\}.
			\end{aligned}
		\end{equation}
		Here, $f_{1}=E_{21}-E_{34}$ is the root vector corresponding to the negative simple root $\epsilon_2 - \epsilon_1$ of $\mathfrak{spo}(4|1)$, and $f_{2}=E_{52}-E_{45}$ is the root vector corresponding to the negative simple root $-\epsilon_2$.
	\end{definition}
	
	We denote the Verma vector $f_1^{b_4} f_2^{b_3} f_1^{b_2} f_2^{b_1} v_{\lambda}$ simply by $\bm{f^{b}} v_{\lambda}$, where $\bm{b} = (b_1, b_2, b_3, b_4)$. Let $H$ denote the set of all Verma vectors of $L(\lambda)$. Then we have the following theorem.
	
	\begin{theorem}\label{KN-correspondence}
		There exists a one-to-one correspondence between the set $H$ of Verma vectors of $L(\lambda)$ and the set $\mathrm{KN}_{\lambda}(4|1)$ of KN tableaux of $\mathfrak{spo}(4|1)$.
	\end{theorem}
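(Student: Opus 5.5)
The plan is to realize the correspondence explicitly through the map $\Phi\colon \mathrm{KN}_{\lambda}(4|1)\to H$, $T\mapsto \bm{f^{b}}v_\lambda$, where $\bm b=(b_1,b_2,b_3,b_4)$ is read off from $T$ as in Section 3. By \eqref{不等式osp} the tuple $\bm b(T)$ satisfies \eqref{verma inequalities}, so $\Phi$ is well defined as a map into the index set of $H$. Here $H$ is understood as indexed by admissible tuples $\bm b$; whether distinct tuples give distinct, nonzero vectors is a question for Section 5. It therefore remains to show that $T\mapsto \bm b(T)$ is injective, and that every integer tuple satisfying \eqref{verma inequalities} arises from some KN tableau.

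For \textbf{weight preservation}, I first compute $\mathrm{wt}(\bm{f^b}v_\lambda)=\lambda-(b_2+b_4)\alpha_1-(b_1+b_3)\alpha_2$. I then check, by counting entries, that $\lambda-\mathrm{wt}(T)$ equals this. Each entry $2$ in row one, or $0$ in row two, costs $\alpha_1$ or $\alpha_2$ respectively, and so on. This is a bookkeeping identity: each entry $x$ in row $i$ contributes $\epsilon_i-\epsilon_x$, with $\epsilon_0=0$ and $\epsilon_{\bar j}=-\epsilon_j$, and this expands in $\alpha_1=\epsilon_1-\epsilon_2$ and $\alpha_2=\epsilon_2$.

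For \textbf{injectivity}, the point is that a KN tableau of shape $\lambda$ is determined by its row contents, since the rows are weakly increasing. Row two contains only $2,0,\bar2,\bar1$. Write $c_0\in\{0,1\}$ for the number of $0$'s, $c_{\bar2}$ and $c_{\bar1}$ for the numbers of $\bar2$'s and $\bar1$'s; $c_0$ is at most $1$ because $0$ cannot repeat in a row. Then $b_1=2(c_{\bar2}+c_{\bar1})+c_0$, whose parity recovers $c_0$ and whose value recovers $c_{\bar2}+c_{\bar1}$. Row one has counts $d_1,d_2,d_0,d_{\bar2},d_{\bar1}$, and similarly $b_3$ recovers $d_0$ and $d_{\bar2}+d_{\bar1}$, while $b_4=d_{\bar1}$. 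Hence $d_{\bar2}$ is known. Finally $b_2=(\lambda_1-d_1)+c_{\bar1}$, which involves two unknowns $d_1,d_2$ (with $d_1+d_2$ fixed by the row length) and the split of $c_{\bar2}+c_{\bar1}$. I expect the KN conditions (ii), (iii) together with the column-strictness to resolve this: one case shows that $T$ falls into exactly one of Figures~\ref{OSP-KN1}--\ref{OSP-KN6}, and within each figure the parameters are recovered linearly. This case-by-case inversion is the first place where care is needed.

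The \textbf{main obstacle is surjectivity}: given $\bm b$ satisfying \eqref{verma inequalities}, I must construct a KN tableau. Rather than inverting directly, I would compare cardinalities. Injectivity gives $|\mathrm{KN}_\lambda(4|1)|\le \#\{\bm b\}$, so it suffices to prove equality, either by summing the number of admissible $\bm b$ as a closed formula in $m_1,m_2$ and matching it with a count of KN tableaux from the six figures, or by an explicit construction. For the explicit construction, I would first try the following procedure:
\begin{itemize}
\item from $b_1$, choose row two to be $2^{m_2-\lceil b_1/2\rceil}0^{b_1\bmod 2}\,\bar2^{x}\bar1^{y}$;
\item from $b_3$ and $b_4$, fill the tail of row one;
\item use $b_2$ to set the number of $1$'s in row one, and choose $x,y$ so as to avoid the forbidden configurations.
\end{itemize}
The constraints $b_3\le b_2+m_1$ and $b_3\le 2b_2$ should be precisely what makes column strictness and condition (ii) achievable, mirroring the derivation in Section 3. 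Verifying this across the $0$/no-$0$ subcases, and in particular handling the rounding in $b_4\le \lfloor b_3/2\rfloor$, is where I expect the most work.
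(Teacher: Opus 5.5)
Your framework is the paper's: the same map $T\mapsto \bm{f^{b}}v_\lambda$, well defined by the inequalities of Section 3. Your entry-by-entry weight computation is correct and simpler than the paper's case-by-case one, though weight preservation is not part of this statement. The gap is that the substance of the theorem, namely that every admissible $\bm b$ has exactly one KN preimage, is left at ``I expect'' on both the injectivity and the surjectivity side.

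The ambiguity you isolate is real. Changing one $1$ in row one to a $2$ and one $\overline{1}$ in row two to a $\overline{2}$ leaves all four $b_i$ unchanged, so $b_2=\lambda_1-d_1+c_{\overline{1}}$ alone does not fix $(d_1,c_{\overline{1}})$. The missing idea is that conditions (ii) and (iii) pin down $d_1+c_{\overline{1}}$:
\begin{itemize}
\item If $c_{\overline{1}}>0$, condition (ii) gives $d_1+c_{\overline{1}}\le m_2$.
\item If $d_1+c_{\overline{1}}\le m_2-2$, then columns $d_1+1$ and $d_1+2$ have height two. Their row-two entries lie in $\{0,\overline{2}\}$, since a $2$ in row two must sit under a $1$. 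Their row-one entries lie in $\{2,0\}$. Since $0$ cannot repeat in a row, this forces $T(1,d_1+1)=2$ and $T(2,d_1+2)=\overline{2}$, the first forbidden configuration in (iii).
\end{itemize}
Substituting $d_1=\lambda_1-b_2+c_{\overline{1}}$ yields
\[
c_{\overline{1}}=\max\bigl\{0,\lfloor (b_2-m_1)/2\rfloor\bigr\}.
\]
This is exactly the paper's split into Cases (i)--(iii): $b_2\ge m_1$ with $b_2-m_1$ even, $b_2\ge m_1$ with $b_2-m_1$ odd, or $b_2<m_1$. With this formula every row count is a function of $\bm b$, and injectivity follows.

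For surjectivity, you must take these counts, with $c_0=b_1\bmod 2$ and $d_0=b_3\bmod 2$, as the definition of $T(\bm b)$. You then check from \eqref{verma inequalities} that all counts are nonnegative and that $T(\bm b)$ is KN; this is what Figures~\ref{OSPKN1}--\ref{OSPKN17} do. Your guess about which inequality does what is partly off:
\begin{itemize}
\item $b_2\le m_1+b_1$ gives $c_{\overline{2}}\ge 0$ and places the row-two $2$'s under $1$'s.
\item $b_3\le\min\{b_2+m_1,2b_2\}$ gives $d_2\ge 0$, and keeps row-one entries $\ge\overline{2}$ either above row-two $\overline{1}$'s or in height-one columns.
\item $b_4\le m_1$ puts the row-one $\overline{1}$'s in height-one columns.
\item Conditions (ii) and (iii) do not come from the $b_3$ bounds. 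They hold because the choice of $c_{\overline{1}}$ forces $d_1+c_{\overline{1}}\in\{m_2-1,m_2\}$, or else $c_{\overline{1}}=0$ and $d_1\ge m_2-1$.
\end{itemize}

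The counting alternative does not bypass this. Counting KN tableaux ``from the six figures'' presupposes exactly the structural fact above. Proposition~\ref{dimension} only equates $|\mathrm{KN}_\lambda(4|1)|$ with $\dim L(\lambda)$, which the paper never computes.
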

	
	\begin{proof}
		Construct a mapping
		\begin{equation*}
			\begin{aligned}
				\psi \colon \mathrm{KN}_{\lambda}(4|1) &\longrightarrow H \\
				T &\longmapsto \bm{f^{b}}v_{\lambda}, \quad \forall T \in \mathrm{KN}_{\lambda}(4|1),
			\end{aligned}
		\end{equation*}
		where $\bm{b}=(b_{1}, b_{2}, b_{3}, b_{4})$, such that: $b_1$ is the sum of twice the number of entries in the second row of $T$ that are greater than or equal to $\overline{2}$ and the number of entries in the second row equal to $0$; $b_2$ is the sum of the number of entries in the first row strictly greater than $1$ and the number of entries in the second row strictly greater than $\overline{2}$; $b_3$ is the sum of twice the number of entries in the first row greater than or equal to $\overline{2}$ and the number of entries in the first row equal to $0$; and $b_4$ is the number of entries in the first row strictly greater than $\overline{2}$. Consequently, $\bm{b}=(b_{1}, b_{2}, b_{3}, b_{4})$ satisfies the system of inequalities \eqref{verma inequalities}, which implies $\bm{f^{b}}v_{\lambda} \in H$. Thus, $\psi$ is well-defined.
		
		For any $\bm{f^{b}}v_{\lambda} \in H$ with $\bm{b}=(b_{1}, b_{2}, b_{3}, b_{4})$, we will show that there exists a unique $T(\bm{b}) \in \mathrm{KN}_{\lambda}(4|1)$ such that $\psi(T(\bm{b})) = \bm{f^{b}}v_{\lambda}$. We proceed by discussing the cases for $\bm{b}=(b_{1}, b_{2}, b_{3}, b_{4})$:
		
		\noindent \textbf{Case (i)} $b_2 \geq m_1$, and $\frac{1}{2}(b_2 - m_1) \in \mathbb{Z}_{\geq 0}$:
		
		If $b_1$ and $b_3$ are even, the unique $T(\bm{b})$ such that $\psi(T(\bm{b})) = \bm{f^{b}}v_{\lambda}$ is shown in Figure~\ref{OSPKN1}. We use this case as an example to illustrate the uniqueness; the other cases can be demonstrated similarly. Since the sum of twice the number of entries in the second row of $T(\bm{b})$ that are greater than or equal to $\overline{2}$ and the number of entries in the second row equal to $0$ must be $b_1$, and $b_1$ is even, there are no $0$'s in the second row of $T(\bm{b})$. Consequently, the number of $2$'s in the second row of $T(\bm{b})$ must be $m_2 - \frac{1}{2}b_1$. Since the sum of the number of entries in the first row strictly greater than $1$ and the number of entries in the second row strictly greater than $\overline{2}$ must be $b_2$, with $b_2 \geq m_1$ and $\frac{1}{2}(b_2 - m_1) \in \mathbb{Z}_{\geq 0}$, the number of $\overline{1}$'s in the second row of $T(\bm{b})$ must be $\frac{1}{2}(b_2 - m_1)$. It follows that the number of $\overline{2}$'s in the second row of $T(\bm{b})$ must be $\frac{1}{2}b_1 - \frac{1}{2}(b_2 - m_1)$. Because the number of entries in the first row strictly greater than $\overline{2}$ must be $b_4$, the number of $\overline{1}$'s in the first row of $T(\bm{b})$ must be $b_4$. Furthermore, since the sum of twice the number of entries in the first row greater than or equal to $\overline{2}$ and the number of entries in the first row equal to $0$ must be $b_3$, and $b_3$ is even, there are no $0$'s in the first row of $T(\bm{b})$. Thus, the number of $\overline{2}$'s in the first row of $T(\bm{b})$ must be $\frac{1}{2}b_3 - b_4$. From this, we deduce that the number of $2$'s in the first row of $T(\bm{b})$ must be $\frac{1}{2}(b_2 + m_1) - \frac{1}{2}b_3$, and the number of $1$'s in the first row of $T(\bm{b})$ must be $m_2 - \frac{1}{2}(b_2 - m_1)$. Therefore, there is exactly one KN tableau $T(\bm{b})$ of $\mathfrak{spo}(4|1)$ satisfying $\psi(T(\bm{b})) = \bm{f^{b}}v_{\lambda}$, as shown in Figure~\ref{OSPKN1}.
		\begin{figure}[H]
			\begin{center}
				\begin{tikzpicture}[scale=0.7,
					every node/.style={font=\large},
					brace/.style={decorate,decoration={brace,  amplitude=5pt,raise=2pt}},
					brace mirror/.style={  decorate,decoration={brace,  mirror, amplitude=5pt,raise=2pt}},
					label/.style={midway,font=\scriptsize,outer sep=6pt} ]
					
					\draw (0,0) rectangle (3,1);
					\foreach \x in {0.5,2.5} \node at (\x,0.5) {1};
					\node at (1.5,0.5) {$\cdots$};
					
					\draw (3,0) rectangle (7,1);
					\foreach \x in {3.5,5.5,6.5} \node at (\x,0.5) {1};
					\node at (4.5,0.5) {$\cdots$};
					
					\draw[brace] (0,1.1) -- node[label,above] {$m_{2}-\frac{1}{2}(b_{2}-m_{1})$} (7,1.1);
					
					\draw (7,0) rectangle (16,1);
					\foreach \x in {7.5,9.5} \node at (\x,0.5) {2};
					\node at (8.5,0.5) {$\cdots$};
					\draw[brace] (7,1.1) -- node[label,above,xshift=-1mm] {$\frac{1}{2}(b_{2}+m_{1})-\frac{1}{2}b_3$} (9.6,1.1);
					
					\foreach \x in {10.5,12.5} \node at (\x,0.5) {$\overline{2}$};
					\node at (11.5,0.5) {$\cdots$};
					\draw[brace] (10.3,1.1) -- node[label,above] {$\frac{1}{2}b_{3}-b_{4}$} (12.6,1.1);
					
					\foreach \x in {13.5,15.5} \node at (\x,0.5) {$\overline{1}$};
					\node at (14.5,0.5) {$\cdots$};
					\draw[brace] (13.3,1.1) -- node[label,above] {$b_{4}$} (16,1.1);

					\draw (0,-1) rectangle (3,0);
					\foreach \x in {0.5,2.5} \node at (\x,-0.5) {2};
					\node at (1.5,-0.5) {$\cdots$};
					\draw[brace mirror] (0,-1.1) -- node[label,below] {$m_{2}-\frac{1}{2}b_{1}$} (3,-1.1);
					
					\draw (3,-1) rectangle (7,0);
					
					\foreach \x in {3.5,5.5,6.5} \node at (\x,-0.5) {$\overline{2}$};
					\node at (4.5,-0.5) {$\cdots$};
					\draw[brace mirror] (3,-1.1) -- node[label,below] {$\frac{1}{2}b_{1}-\frac{1}{2}(b_{2}-m_{1})$} (7,-1.1);
					
					\draw (7,-1) rectangle (12,0);
					\foreach \x in {8,11} \node at (\x,-0.5) {$\overline{1}$};
					\node at (9.5,-0.5) {$\cdots$};
					\draw[brace mirror] (7,-1.1) -- node[label,below] {$\frac{1}{2}(b_{2}-m_{1})$} (12,-1.1);
					
				\end{tikzpicture}
				\caption{}
				\label{OSPKN1}
			\end{center}
		\end{figure}
		
		If $b_1$ is even and $b_3$ is odd, the unique $T(\bm{b})$ such that $\psi(T(\bm{b})) = \bm{f^{b}}v_{\lambda}$ is shown in Figure~\ref{OSPKN2}. 
		\begin{figure}[H]
			\begin{center}
				\begin{tikzpicture}[scale=0.7,
					every node/.style={font=\large},
					brace/.style={decorate,decoration={brace,  amplitude=5pt,raise=2pt}},
					brace mirror/.style={  decorate,decoration={brace,  mirror, amplitude=5pt,raise=2pt}},
					label/.style={midway,font=\scriptsize,outer sep=6pt} ]
					
					\draw (0,0) rectangle (3,1);
					\foreach \x in {0.5,2.5} \node at (\x,0.5) {1};
					\node at (1.5,0.5) {$\cdots$};
					
					\draw (3,0) rectangle (7,1);
					\foreach \x in {3.5,5.5,6.5} \node at (\x,0.5) {1};
					\node at (4.5,0.5) {$\cdots$};
					
					\draw[brace] (0,1.1) -- node[label,above,xshift=-1mm] {$m_{2}-\frac{1}{2}(b_{2}-m_{1})$} (7,1.1);
					
					\draw (7,0) rectangle (17,1);
					\foreach \x in {7.5,9.5} \node at (\x,0.5) {2};
					\node at (8.5,0.5) {$\cdots$};
					\draw[brace] (7,1.1) -- node[label,above] {$\frac{1}{2}(b_{2}+m_{1})-\frac{1}{2}(b_{3}-1)-1$} (9.6,1.1);
					
					\node at (10.5,0.5) {0};
					
					\foreach \x in {11.5,13.5} \node at (\x,0.5) {$\overline{2}$};
					\node at (12.5,0.5) {$\cdots$};
					\draw[brace] (11.3,1.1) -- node[label,above,xshift=3mm] {$\frac{1}{2}(b_{3}-1)-b_{4}$} (13.6,1.1);
					
					\foreach \x in {14.5,16.5} \node at (\x,0.5) {$\overline{1}$};
					\node at (15.5,0.5) {$\cdots$};
					\draw[brace] (14.3,1.1) -- node[label,above] {$b_{4}$} (17,1.1);

					\draw (0,-1) rectangle (3,0);
					\foreach \x in {0.5,2.5} \node at (\x,-0.5) {2};
					\node at (1.5,-0.5) {$\cdots$};
					\draw[brace mirror] (0,-1.1) -- node[label,below] {$m_{2}-\frac{1}{2}b_{1}$} (3,-1.1);
					
					\draw (3,-1) rectangle (7,0);
					
					\foreach \x in {3.5,5.5,6.5} \node at (\x,-0.5) {$\overline{2}$};
					\node at (4.5,-0.5) {$\cdots$};
					\draw[brace mirror] (3,-1.1) -- node[label,below] {$\frac{1}{2}b_{1}-\frac{1}{2}(b_{2}-m_{1})$} (7,-1.1);
					
					\draw (7,-1) rectangle (12,0);
					\foreach \x in {8,11} \node at (\x,-0.5) {$\overline{1}$};
					\node at (9.5,-0.5) {$\cdots$};
					\draw[brace mirror] (7,-1.1) -- node[label,below] {$\frac{1}{2}(b_{2}-m_{1})$} (12,-1.1);
					
				\end{tikzpicture}
				\caption{}
				\label{OSPKN2}
			\end{center}
		\end{figure}
		
		If $b_1$ and $b_3$ are odd, the unique $T(\bm{b})$ such that $\psi(T(\bm{b})) = \bm{f^{b}}v_{\lambda}$ is shown in Figure~\ref{OSPKN3}. 
		\begin{figure}[H]
			\begin{center}
				\begin{tikzpicture}[scale=0.7,
					every node/.style={font=\large},
					brace/.style={decorate,decoration={brace,  amplitude=5pt,raise=2pt}},
					brace mirror/.style={  decorate,decoration={brace,  mirror, amplitude=5pt,raise=2pt}},
					label/.style={midway,font=\scriptsize,outer sep=6pt} ]
					
					\draw (0,0) rectangle (4,1);
					\foreach \x in {0.5,2.5} \node at (\x,0.5) {1};
					\node at (1.5,0.5) {$\cdots$};
					
					\draw (3,0) rectangle (8,1);
					\foreach \x in {3.5,4.5,6.5,7.5} \node at (\x,0.5) {1};
					\node at (5.5,0.5) {$\cdots$};
					
					\draw[brace] (0,1.1) -- node[label,above] {$m_{2}-\frac{1}{2}(b_{2}-m_{1})$} (8,1.1);
					
					\draw (8,0) rectangle (18,1);
					\foreach \x in {8.5,10.5} \node at (\x,0.5) {2};
					\node at (9.5,0.5) {$\cdots$};
					\draw[brace] (8,1.1) -- node[label,above,xshift=-2mm] {$\frac{1}{2}(b_{2}+m_{1})-\frac{1}{2}(b_{3}-1)-1$} (10.6,1.1);
					
					\node at (11.5,0.5) {0};
					
					\foreach \x in {12.5,14.5} \node at (\x,0.5) {$\overline{2}$};
					\node at (13.5,0.5) {$\cdots$};
					\draw[brace] (12.3,1.1) -- node[label,above,xshift=2mm] {$\frac{1}{2}(b_{3}-1)-b_{4}$} (14.6,1.1);
					
					\foreach \x in {15.5,17.5} \node at (\x,0.5) {$\overline{1}$};
					\node at (16.5,0.5) {$\cdots$};
					\draw[brace] (15.3,1.1) -- node[label,above] {$b_{4}$} (18,1.1);
					
					\draw (0,-1) rectangle (4,0);
					\foreach \x in {0.5,2.5} \node at (\x,-0.5) {2};
					\node at (1.5,-0.5) {$\cdots$};
					\draw[brace mirror] (0,-1.1) -- node[label,below,xshift=-2mm] {$m_{2}-\frac{1}{2}(b_{1}-1)-1$} (3,-1.1);
					
					\draw (3,-1) rectangle (8,0);
					\node at (3.5,-0.5) {0};
					\foreach \x in {4.5,6.5,7.5} \node at (\x,-0.5) {$\overline{2}$};
					\node at (5.5,-0.5) {$\cdots$};
					\draw[brace mirror] (4,-1.1) -- node[label,below,xshift=1mm] {$\frac{1}{2}(b_{1}-1)-\frac{1}{2}(b_{2}-m_{1})$} (8,-1.1);
					
					\draw (8,-1) rectangle (13,0);
					\foreach \x in {9,12} \node at (\x,-0.5) {$\overline{1}$};
					\node at (10.5,-0.5) {$\cdots$};
					\draw[brace mirror] (8,-1.1) -- node[label,below] {$\frac{1}{2}(b_{2}-m_{1})$} (13,-1.1);
					
				\end{tikzpicture}
				\caption{}
				\label{OSPKN3}
			\end{center}
		\end{figure}
		
		If $b_1$ is odd and $b_3$ is even, the unique $T(\bm{b})$ such that $\psi(T(\bm{b})) = \bm{f^{b}}v_{\lambda}$ is shown in Figure~\ref{OSPKN4}.
		\begin{figure}[H]
			\begin{center}
				\begin{tikzpicture}[scale=0.7,
					every node/.style={font=\large},
					brace/.style={decorate,decoration={brace,  amplitude=5pt,raise=2pt}},
					brace mirror/.style={  decorate,decoration={brace,  mirror, amplitude=5pt,raise=2pt}},
					label/.style={midway,font=\scriptsize,outer sep=6pt} ]
					
					\draw (0,0) rectangle (4,1);
					\foreach \x in {0.5,2.5} \node at (\x,0.5) {1};
					\node at (1.5,0.5) {$\cdots$};
					
					\draw (3,0) rectangle (8,1);
					\foreach \x in {3.5,4.5,6.5,7.5} \node at (\x,0.5) {1};
					\node at (5.5,0.5) {$\cdots$};
					
					\draw[brace] (0,1.1) -- node[label,above] {$m_{2}-\frac{1}{2}(b_{2}-m_{1})$} (8,1.1);
					
					\draw (8,0) rectangle (17,1);
					\foreach \x in {8.5,10.5} \node at (\x,0.5) {2};
					\node at (9.5,0.5) {$\cdots$};
					\draw[brace] (8,1.1) -- node[label,above,xshift=-1mm] {$\frac{1}{2}(b_{2}+m_{1})-\frac{1}{2}b_3$} (10.6,1.1);
					
					\foreach \x in {11.5,13.5} \node at (\x,0.5) {$\overline{2}$};
					\node at (12.5,0.5) {$\cdots$};
					\draw[brace] (11.3,1.1) -- node[label,above] {$\frac{1}{2}b_{3}-b_{4}$} (13.6,1.1);
					
					\foreach \x in {14.5,16.5} \node at (\x,0.5) {$\overline{1}$};
					\node at (15.5,0.5) {$\cdots$};
					\draw[brace] (14.3,1.1) -- node[label,above] {$b_{4}$} (17,1.1);
					
					\draw (0,-1) rectangle (4,0);
					\foreach \x in {0.5,2.5} \node at (\x,-0.5) {2};
					\node at (1.5,-0.5) {$\cdots$};
					\draw[brace mirror] (0,-1.1) -- node[label,below,xshift=-2mm] {$m_{2}-\frac{1}{2}(b_{1}-1)-1$} (3,-1.1);
					
					\draw (3,-1) rectangle (8,0);
					\node at (3.5,-0.5) {0};
					\foreach \x in {4.5,6.5,7.5} \node at (\x,-0.5) {$\overline{2}$};
					\node at (5.5,-0.5) {$\cdots$};
					\draw[brace mirror] (4,-1.1) -- node[label,below] {$\frac{1}{2}(b_{1}-1)-\frac{1}{2}(b_{2}-m_{1})$} (8,-1.1);
					
					\draw (8,-1) rectangle (13,0);
					\foreach \x in {9,12} \node at (\x,-0.5) {$\overline{1}$};
					\node at (10.5,-0.5) {$\cdots$};
					\draw[brace mirror] (8,-1.1) -- node[label,below] {$\frac{1}{2}(b_{2}-m_{1})$} (13,-1.1);
					
				\end{tikzpicture}
				\caption{}
				\label{OSPKN4}
			\end{center}
		\end{figure}
		
		\noindent \textbf{Case (ii)} $b_2 \geq m_1$, and $\frac{1}{2}(b_2 - m_1 - 1) \in \mathbb{Z}_{\geq 0}$:
		
		If $b_1$ and $b_3$ are even, the unique $T(\bm{b})$ such that $\psi(T(\bm{b})) = \bm{f^{b}}v_{\lambda}$ is shown in Figure~\ref{OSPKN5}. 
		\begin{figure}[H]
			\begin{center}
				\begin{tikzpicture}[scale=0.7,
					every node/.style={font=\large},
					brace/.style={decorate,decoration={brace,  amplitude=5pt,raise=2pt}},
					brace mirror/.style={  decorate,decoration={brace,  mirror, amplitude=5pt,raise=2pt}},
					label/.style={midway,font=\scriptsize,outer sep=6pt} ]
					
					\draw (0,0) rectangle (3,1);
					\foreach \x in {0.5,2.5} \node at (\x,0.5) {1};
					\node at (1.5,0.5) {$\cdots$};
					
					\draw (3,0) rectangle (7,1);
					\foreach \x in {3.5,5.5} \node at (\x,0.5) {1};
					\node at (4.5,0.5) {$\cdots$};
					\node at (6.5,0.5) {2};
					
					\draw[brace] (0,1.1) -- node[label,above] {$m_{2}-\frac{1}{2}(b_{2}-m_{1}+1)$} (5.5,1.1);
					
					\draw (7,0) rectangle (16,1);
					\foreach \x in {7.5,9.5} \node at (\x,0.5) {2};
					\node at (8.5,0.5) {$\cdots$};
					\draw[brace] (6.4,1.1) -- node[label,above,xshift=-1mm] {$\frac{1}{2}(b_{2}+m_{1}+1)-\frac{1}{2}b_3$} (9.6,1.1);
					
					\foreach \x in {10.5,12.5} \node at (\x,0.5) {$\overline{2}$};
					\node at (11.5,0.5) {$\cdots$};
					\draw[brace] (10.3,1.1) -- node[label,above] {$\frac{1}{2}b_{3}-b_{4}$} (12.6,1.1);
					
					\foreach \x in {13.5,15.5} \node at (\x,0.5) {$\overline{1}$};
					\node at (14.5,0.5) {$\cdots$};
					\draw[brace] (13.3,1.1) -- node[label,above] {$b_{4}$} (16,1.1);

					\draw (0,-1) rectangle (3,0);
					\foreach \x in {0.5,2.5} \node at (\x,-0.5) {2};
					\node at (1.5,-0.5) {$\cdots$};
					\draw[brace mirror] (0,-1.1) -- node[label,below] {$m_{2}-\frac{1}{2}b_{1}$} (3,-1.1);
					
					\draw (3,-1) rectangle (7,0);
					
					\foreach \x in {3.5,5.5,6.5} \node at (\x,-0.5) {$\overline{2}$};
					\node at (4.5,-0.5) {$\cdots$};
					\draw[brace mirror] (3,-1.1) -- node[label,below] {$\frac{1}{2}b_{1}-\frac{1}{2}(b_{2}-m_{1}-1)$} (7,-1.1);
					
					\draw (7,-1) rectangle (12,0);
					\foreach \x in {8,11} \node at (\x,-0.5) {$\overline{1}$};
					\node at (9.5,-0.5) {$\cdots$};
					\draw[brace mirror] (7,-1.1) -- node[label,below] {$\frac{1}{2}(b_{2}-m_{1}-1)$} (12,-1.1);
					
				\end{tikzpicture}
				\caption{}
				\label{OSPKN5}
			\end{center}
		\end{figure}
		
		If $b_1$ is even, $b_3$ is odd, and $b_3 \neq b_2 + m_1$, the unique $T(\bm{b})$ such that $\psi(T(\bm{b})) = \bm{f^{b}}v_{\lambda}$ is shown in Figure~\ref{OSPKN6}.
		\begin{figure}[H]
			\begin{center}
				\begin{tikzpicture}[scale=0.75,
					every node/.style={font=\large},
					brace/.style={decorate,decoration={brace,  amplitude=5pt,raise=2pt}},
					brace mirror/.style={  decorate,decoration={brace,  mirror, amplitude=5pt,raise=2pt}},
					label/.style={midway,font=\scriptsize,outer sep=6pt} ]
					
					\draw (0,0) rectangle (3,1);
					\foreach \x in {0.5,2.5} \node at (\x,0.5) {1};
					\node at (1.5,0.5) {$\cdots$};
					
					\draw (3,0) rectangle (7,1);
					\foreach \x in {3.5,5.5} \node at (\x,0.5) {1};
					\node at (4.5,0.5) {$\cdots$};
					\node at (6.5,0.5) {2};
					
					\draw[brace] (0,1.1) -- node[label,above,xshift=-2mm] {$m_{2}-\frac{1}{2}(b_{2}-m_{1}+1)$} (5.5,1.1);
					
					\draw (7,0) rectangle (17,1);
					\foreach \x in {7.5,9.5} \node at (\x,0.5) {2};
					\node at (8.5,0.5) {$\cdots$};
					\draw[brace] (6.4,1.1) -- node[label,above] {$\frac{1}{2}(b_{2}+m_{1}+1)-\frac{1}{2}(b_{3}-1)-1$} (9.6,1.1);
					
					\node at (10.5,0.5) {0};
					
					\foreach \x in {11.5,13.5} \node at (\x,0.5) {$\overline{2}$};
					\node at (12.5,0.5) {$\cdots$};
					\draw[brace] (11.3,1.1) -- node[label,above,xshift=2mm] {$\frac{1}{2}(b_{3}-1)-b_{4}$} (13.6,1.1);
					
					\foreach \x in {14.5,16.5} \node at (\x,0.5) {$\overline{1}$};
					\node at (15.5,0.5) {$\cdots$};
					\draw[brace] (14.3,1.1) -- node[label,above] {$b_{4}$} (17,1.1);

					\draw (0,-1) rectangle (3,0);
					\foreach \x in {0.5,2.5} \node at (\x,-0.5) {2};
					\node at (1.5,-0.5) {$\cdots$};
					\draw[brace mirror] (0,-1.1) -- node[label,below] {$m_{2}-\frac{1}{2}b_{1}$} (3,-1.1);
					
					\draw (3,-1) rectangle (7,0);
					
					\foreach \x in {3.5,5.5,6.5} \node at (\x,-0.5) {$\overline{2}$};
					\node at (4.5,-0.5) {$\cdots$};
					\draw[brace mirror] (3,-1.1) -- node[label,below] {$\frac{1}{2}b_{1}-\frac{1}{2}(b_{2}-m_{1}-1)$} (7,-1.1);
					
					\draw (7,-1) rectangle (12,0);
					\foreach \x in {8,11} \node at (\x,-0.5) {$\overline{1}$};
					\node at (9.5,-0.5) {$\cdots$};
					\draw[brace mirror] (7,-1.1) -- node[label,below] {$\frac{1}{2}(b_{2}-m_{1}-1)$} (12,-1.1);
					
				\end{tikzpicture}
				\caption{}
				\label{OSPKN6}
			\end{center}
		\end{figure}
		
		 If $b_1$ is even, $b_3$ is odd, and $b_3 = b_2 + m_1$, the unique $T(\bm{b})$ such that $\psi(T(\bm{b})) = \bm{f^{b}}v_{\lambda}$ is shown in Figure~\ref{OSPKN7}. 
		\begin{figure}[H]
			\begin{center}
				\begin{tikzpicture}[scale=0.7,
					every node/.style={font=\large},
					brace/.style={decorate,decoration={brace,  amplitude=5pt,raise=2pt}},
					brace mirror/.style={  decorate,decoration={brace,  mirror, amplitude=5pt,raise=2pt}},
					label/.style={midway,font=\scriptsize,outer sep=6pt} ]
					
					\draw (0,0) rectangle (3,1);
					\foreach \x in {0.5,2.5} \node at (\x,0.5) {1};
					\node at (1.5,0.5) {$\cdots$};
					
					\draw (3,0) rectangle (7,1);
					\foreach \x in {3.5,5.5} \node at (\x,0.5) {1};
					\node at (4.5,0.5) {$\cdots$};
					\node at (6.5,0.5) {0};
					
					\draw[brace] (0,1.1) -- node[label,above] {$m_{2}-\frac{1}{2}(b_{2}-m_{1}+1)$} (5.5,1.1);
					
					\draw (7,0) rectangle (17,1);
					\foreach \x in {7.9,9.9} \node at (\x,0.5) {$\overline{2}$};
					\node at (8.9,0.5) {$\cdots$};

					\foreach \x in {11.3,13.3} \node at (\x,0.5) {$\overline{2}$};
					\node at (12.3,0.5) {$\cdots$};
					\draw[brace] (7,1.1) -- node[label,above] {$\frac{1}{2}(b_{3}-1)-b_{4}$} (13.6,1.1);
					
					\foreach \x in {14.5,16.5} \node at (\x,0.5) {$\overline{1}$};
					\node at (15.5,0.5) {$\cdots$};
					\draw[brace] (14.3,1.1) -- node[label,above] {$b_{4}$} (17,1.1);

					\draw (0,-1) rectangle (3,0);
					\foreach \x in {0.5,2.5} \node at (\x,-0.5) {2};
					\node at (1.5,-0.5) {$\cdots$};
					\draw[brace mirror] (0,-1.1) -- node[label,below] {$m_{2}-\frac{1}{2}b_{1}$} (3,-1.1);
					
					\draw (3,-1) rectangle (7,0);
					
					\foreach \x in {3.5,5.5,6.5} \node at (\x,-0.5) {$\overline{2}$};
					\node at (4.5,-0.5) {$\cdots$};
					\draw[brace mirror] (3,-1.1) -- node[label,below] {$\frac{1}{2}b_{1}-\frac{1}{2}(b_{2}-m_{1}-1)$} (7,-1.1);
					
					\draw (7,-1) rectangle (12,0);
					\foreach \x in {8,11} \node at (\x,-0.5) {$\overline{1}$};
					\node at (9.5,-0.5) {$\cdots$};
					\draw[brace mirror] (7,-1.1) -- node[label,below] {$\frac{1}{2}(b_{2}-m_{1}-1)$} (12,-1.1);
					
				\end{tikzpicture}
				\caption{}
				\label{OSPKN7}
			\end{center}
		\end{figure}
		
		If $b_1$ and $b_3$ are odd, with $b_2 \neq b_1 + m_1$ and $b_3 \neq b_2 + m_1$, the unique $T(\bm{b})$ such that $\psi(T(\bm{b})) = \bm{f^{b}}v_{\lambda}$ is shown in Figure~\ref{OSPKN8}. 
		\begin{figure}[H]
			\begin{center}
				\begin{tikzpicture}[scale=0.75,
					every node/.style={font=\large},
					brace/.style={decorate,decoration={brace,  amplitude=5pt,raise=2pt}},
					brace mirror/.style={  decorate,decoration={brace,  mirror, amplitude=5pt,raise=2pt}},
					label/.style={midway,font=\scriptsize,outer sep=6pt} ]
					
					\draw (0,0) rectangle (4,1);
					\foreach \x in {0.5,2.5} \node at (\x,0.5) {1};
					\node at (1.5,0.5) {$\cdots$};
					
					\draw (3,0) rectangle (8,1);
					\foreach \x in {3.5,4.5,6.5} \node at (\x,0.5) {1};
					\node at (5.5,0.5) {$\cdots$};
					\node at (7.5,0.5) {2};
					
					\draw[brace] (0,1.1) -- node[label,above] {$m_{2}-\frac{1}{2}(b_{2}-m_{1}+1)$} (6.5,1.1);
					
					\draw (8,0) rectangle (18,1);
					\foreach \x in {8.5,10.5} \node at (\x,0.5) {2};
					\node at (9.5,0.5) {$\cdots$};
					\draw[brace] (7.4,1.1) -- node[label,above,xshift=-1mm] {$\frac{1}{2}(b_{2}+m_{1}+1)-\frac{1}{2}(b_{3}-1)-1$} (10.6,1.1);
					
					\node at (11.5,0.5) {0};
					
					\foreach \x in {12.5,14.5} \node at (\x,0.5) {$\overline{2}$};
					\node at (13.5,0.5) {$\cdots$};
					\draw[brace] (12.3,1.1) -- node[label,above,xshift=2mm] {$\frac{1}{2}(b_{3}-1)-b_{4}$} (14.6,1.1);
					
					\foreach \x in {15.5,17.5} \node at (\x,0.5) {$\overline{1}$};
					\node at (16.5,0.5) {$\cdots$};
					\draw[brace] (15.3,1.1) -- node[label,above] {$b_{4}$} (18,1.1);
					
					\draw (0,-1) rectangle (4,0);
					\foreach \x in {0.5,2.5} \node at (\x,-0.5) {2};
					\node at (1.5,-0.5) {$\cdots$};
					\draw[brace mirror] (0,-1.1) -- node[label,below,xshift=-3mm] {$m_{2}-\frac{1}{2}(b_{1}-1)-1$} (3,-1.1);
					
					\draw (3,-1) rectangle (8,0);
					\node at (3.5,-0.5) {0};
					\foreach \x in {4.5,6.5,7.5} \node at (\x,-0.5) {$\overline{2}$};
					\node at (5.5,-0.5) {$\cdots$};
					\draw[brace mirror] (4,-1.1) -- node[label,below] {$\frac{1}{2}(b_{1}-1)-\frac{1}{2}(b_{2}-m_{1}-1)$} (8,-1.1);
					
					\draw (8,-1) rectangle (13,0);
					\foreach \x in {9,12} \node at (\x,-0.5) {$\overline{1}$};
					\node at (10.5,-0.5) {$\cdots$};
					\draw[brace mirror] (8,-1.1) -- node[label,below] {$\frac{1}{2}(b_{2}-m_{1}-1)$} (13,-1.1);
					
				\end{tikzpicture}
				\caption{}
				\label{OSPKN8}
			\end{center}
		\end{figure}
		
		If $b_1$ and $b_3$ are odd, with $b_2 \neq b_1 + m_1$ and $b_3 = b_2 + m_1$, the unique $T(\bm{b})$ such that $\psi(T(\bm{b})) = \bm{f^{b}}v_{\lambda}$ is shown in Figure~\ref{OSPKN9}. 
		\begin{figure}[H]
			\begin{center}
				\begin{tikzpicture}[scale=0.75,
					every node/.style={font=\large},
					brace/.style={decorate,decoration={brace,  amplitude=5pt,raise=2pt}},
					brace mirror/.style={  decorate,decoration={brace,  mirror, amplitude=5pt,raise=2pt}},
					label/.style={midway,font=\scriptsize,outer sep=6pt} ]
					
					\draw (0,0) rectangle (4,1);
					\foreach \x in {0.5,2.5} \node at (\x,0.5) {1};
					\node at (1.5,0.5) {$\cdots$};
					
					\draw (3,0) rectangle (8,1);
					\foreach \x in {3.5,4.5,6.5} \node at (\x,0.5) {1};
					\node at (5.5,0.5) {$\cdots$};
					\node at (7.5,0.5) {0};
					
					\draw[brace] (0,1.1) -- node[label,above] {$m_{2}-\frac{1}{2}(b_{2}-m_{1}+1)$} (6.5,1.1);
					
					\draw (8,0) rectangle (18,1);
					\foreach \x in {8.9,10.9} \node at (\x,0.5) {$\overline{2}$};
					\node at (9.9,0.5) {$\cdots$};

					\foreach \x in {12.3,14.3} \node at (\x,0.5) {$\overline{2}$};
					\node at (13.3,0.5) {$\cdots$};
					\draw[brace] (8,1.1) -- node[label,above] {$\frac{1}{2}(b_{3}-1)-b_{4}$} (14.6,1.1);
					
					\foreach \x in {15.5,17.5} \node at (\x,0.5) {$\overline{1}$};
					\node at (16.5,0.5) {$\cdots$};
					\draw[brace] (15.3,1.1) -- node[label,above] {$b_{4}$} (18,1.1);
					
					\draw (0,-1) rectangle (4,0);
					\foreach \x in {0.5,2.5} \node at (\x,-0.5) {2};
					\node at (1.5,-0.5) {$\cdots$};
					\draw[brace mirror] (0,-1.1) -- node[label,below,xshift=-3mm] {$m_{2}-\frac{1}{2}(b_{1}-1)-1$} (3,-1.1);
					
					\draw (3,-1) rectangle (8,0);
					\node at (3.5,-0.5) {0};
					\foreach \x in {4.5,6.5,7.5} \node at (\x,-0.5) {$\overline{2}$};
					\node at (5.5,-0.5) {$\cdots$};
					\draw[brace mirror] (4,-1.1) -- node[label,below] {$\frac{1}{2}(b_{1}-1)-\frac{1}{2}(b_{2}-m_{1}-1)$} (8,-1.1);
					
					\draw (8,-1) rectangle (13,0);
					\foreach \x in {9,12} \node at (\x,-0.5) {$\overline{1}$};
					\node at (10.5,-0.5) {$\cdots$};
					\draw[brace mirror] (8,-1.1) -- node[label,below] {$\frac{1}{2}(b_{2}-m_{1}-1)$} (13,-1.1);
					
				\end{tikzpicture}
				\caption{}
				\label{OSPKN9}
			\end{center}
		\end{figure}
		
		If $b_1$ and $b_3$ are odd, with $b_2 = b_1 + m_1$ and $b_3 \neq b_2 + m_1$, the unique $T(\bm{b})$ such that $\psi(T(\bm{b})) = \bm{f^{b}}v_{\lambda}$ is shown in Figure~\ref{OSPKN10}. 
		\begin{figure}[H]
			\begin{center}
				\begin{tikzpicture}[scale=0.85,
					every node/.style={font=\large},
					brace/.style={decorate,decoration={brace,  amplitude=5pt,raise=2pt}},
					brace mirror/.style={  decorate,decoration={brace,  mirror, amplitude=5pt,raise=2pt}},
					label/.style={midway,font=\scriptsize,outer sep=6pt} ]
					
					\draw (0,0) rectangle (4,1);
					\foreach \x in {0.5,2.5} \node at (\x,0.5) {1};
					\node at (1.5,0.5) {$\cdots$};

					\node at (3.5,0.5) {2};

					\draw[brace] (0,1.1) -- node[label,above,xshift=-8mm] {$m_{2}-\frac{1}{2}(b_{2}-m_{1}+1)$} (3,1.1);
					
					\draw (3,0) rectangle (14,1);
					\foreach \x in {4.5,6.5} \node at (\x,0.5) {2};
					\node at (5.5,0.5) {$\cdots$};
					\draw[brace] (3,1.1) -- node[label,above,xshift=6mm] {$\frac{1}{2}(b_{2}+m_{1}+1)-\frac{1}{2}(b_{3}-1)-1$} (6.6,1.1);
					
					\node at (7.5,0.5) {0};
					
					\foreach \x in {8.5,10.5} \node at (\x,0.5) {$\overline{2}$};
					\node at (9.5,0.5) {$\cdots$};
					\draw[brace] (8.3,1.1) -- node[label,above,xshift=2mm] {$\frac{1}{2}(b_{3}-1)-b_{4}$} (10.6,1.1);
					
					\foreach \x in {11.5,13.5} \node at (\x,0.5) {$\overline{1}$};
					\node at (12.5,0.5) {$\cdots$};
					\draw[brace] (11.3,1.1) -- node[label,above] {$b_{4}$} (14,1.1);
					
					\draw (0,-1) rectangle (4,0);
					\foreach \x in {0.5,2.5} \node at (\x,-0.5) {2};
					\node at (1.5,-0.5) {$\cdots$};
					\draw[brace mirror] (0,-1.1) -- node[label,below] {$m_{2}-\frac{1}{2}(b_{1}-1)-1$} (3,-1.1);

					\node at (3.5,-0.5) {0};
					
					\draw (3,-1) rectangle (9,0);
					\foreach \x in {5,8} \node at (\x,-0.5) {$\overline{1}$};
					\node at (6.5,-0.5) {$\cdots$};
					\draw[brace mirror] (4,-1.1) -- node[label,below] {$\frac{1}{2}(b_{2}-m_{1}-1)$} (9,-1.1);
					
				\end{tikzpicture}
				\caption{}
				\label{OSPKN10}
			\end{center}
		\end{figure}
		
		If $b_1$ and $b_3$ are odd, with $b_2 = b_1 + m_1$ and $b_3 = b_2 + m_1$, the unique $T(\bm{b})$ such that $\psi(T(\bm{b})) = \bm{f^{b}}v_{\lambda}$ is shown in Figure~\ref{OSPKN11}. 
		\begin{figure}[H]
			\begin{center}
				\begin{tikzpicture}[scale=0.7,
					every node/.style={font=\large},
					brace/.style={decorate,decoration={brace,  amplitude=5pt,raise=2pt}},
					brace mirror/.style={  decorate,decoration={brace,  mirror, amplitude=5pt,raise=2pt}},
					label/.style={midway,font=\scriptsize,outer sep=6pt} ]
					
					\draw (0,0) rectangle (4,1);
					\foreach \x in {0.5,2.5} \node at (\x,0.5) {1};
					\node at (1.5,0.5) {$\cdots$};
					
					\node at (3.5,0.5) {0};
					
					\draw[brace] (0,1.1) -- node[label,above] {$m_{2}-\frac{1}{2}(b_{2}-m_{1}+1)$} (3,1.1);
					
					\draw (3,0) rectangle (14,1);
					\foreach \x in {4.9,6.9} \node at (\x,0.5) {$\overline{2}$};
					\node at (5.9,0.5) {$\cdots$};
					
					\foreach \x in {8.3,10.3} \node at (\x,0.5) {$\overline{2}$};
					\node at (9.3,0.5) {$\cdots$};
					\draw[brace] (4,1.1) -- node[label,above] {$\frac{1}{2}(b_{3}-1)-b_{4}$} (10.6,1.1);
					
					\foreach \x in {11.5,13.5} \node at (\x,0.5) {$\overline{1}$};
					\node at (12.5,0.5) {$\cdots$};
					\draw[brace] (11.3,1.1) -- node[label,above] {$b_{4}$} (14,1.1);
					
					\draw (0,-1) rectangle (4,0);
					\foreach \x in {0.5,2.5} \node at (\x,-0.5) {2};
					\node at (1.5,-0.5) {$\cdots$};
					\draw[brace mirror] (0,-1.1) -- node[label,below] {$m_{2}-\frac{1}{2}(b_{1}-1)-1$} (3,-1.1);

					\node at (3.5,-0.5) {0};
					
					\draw (3,-1) rectangle (9,0);
					\foreach \x in {5,8} \node at (\x,-0.5) {$\overline{1}$};
					\node at (6.5,-0.5) {$\cdots$};
					\draw[brace mirror] (4,-1.1) -- node[label,below] {$\frac{1}{2}(b_{2}-m_{1}-1)$} (9,-1.1);
					
				\end{tikzpicture}
				\caption{}
				\label{OSPKN11}
			\end{center}
		\end{figure}
		
		If $b_1$ is odd, $b_3$ is even, and $b_2 \neq b_1 + m_1$, the unique $T(\bm{b})$ such that $\psi(T(\bm{b})) = \bm{f^{b}}v_{\lambda}$ is shown in Figure~\ref{OSPKN12}. 
		\begin{figure}[H]
			\begin{center}
				\begin{tikzpicture}[scale=0.75,
					every node/.style={font=\large},
					brace/.style={decorate,decoration={brace,  amplitude=5pt,raise=2pt}},
					brace mirror/.style={  decorate,decoration={brace,  mirror, amplitude=5pt,raise=2pt}},
					label/.style={midway,font=\scriptsize,outer sep=6pt} ]
					
					\draw (0,0) rectangle (4,1);
					\foreach \x in {0.5,2.5} \node at (\x,0.5) {1};
					\node at (1.5,0.5) {$\cdots$};
					
					\draw (3,0) rectangle (8,1);
					\foreach \x in {3.5,4.5,6.5} \node at (\x,0.5) {1};
					\node at (7.5,0.5) {2};
					\node at (5.5,0.5) {$\cdots$};
					
					\draw[brace] (0,1.1) -- node[label,above] {$m_{2}-\frac{1}{2}(b_{2}-m_{1}+1)$} (6.6,1.1);
					
					\draw (8,0) rectangle (17,1);
					\foreach \x in {8.5,10.5} \node at (\x,0.5) {2};
					\node at (9.5,0.5) {$\cdots$};
					\draw[brace] (7.4,1.1) -- node[label,above,xshift=-1mm] {$\frac{1}{2}(b_{2}+m_{1}+1)-\frac{1}{2}b_3$} (10.6,1.1);
					
					\foreach \x in {11.5,13.5} \node at (\x,0.5) {$\overline{2}$};
					\node at (12.5,0.5) {$\cdots$};
					\draw[brace] (11.3,1.1) -- node[label,above] {$\frac{1}{2}b_{3}-b_{4}$} (13.6,1.1);
					
					\foreach \x in {14.5,16.5} \node at (\x,0.5) {$\overline{1}$};
					\node at (15.5,0.5) {$\cdots$};
					\draw[brace] (14.3,1.1) -- node[label,above] {$b_{4}$} (17,1.1);
					
					\draw (0,-1) rectangle (4,0);
					\foreach \x in {0.5,2.5} \node at (\x,-0.5) {2};
					\node at (1.5,-0.5) {$\cdots$};
					\draw[brace mirror] (0,-1.1) -- node[label,below,xshift=-3mm] {$m_{2}-\frac{1}{2}(b_{1}-1)-1$} (3,-1.1);
					
					\draw (3,-1) rectangle (8,0);
					\node at (3.5,-0.5) {0};
					\foreach \x in {4.5,6.5,7.5} \node at (\x,-0.5) {$\overline{2}$};
					\node at (5.5,-0.5) {$\cdots$};
					\draw[brace mirror] (4,-1.1) -- node[label,below] {$\frac{1}{2}(b_{1}-1)-\frac{1}{2}(b_{2}-m_{1}-1)$} (8,-1.1);
					
					\draw (8,-1) rectangle (13,0);
					\foreach \x in {9,12} \node at (\x,-0.5) {$\overline{1}$};
					\node at (10.5,-0.5) {$\cdots$};
					\draw[brace mirror] (8,-1.1) -- node[label,below] {$\frac{1}{2}(b_{2}-m_{1}-1)$} (13,-1.1);
					
				\end{tikzpicture}
				\caption{}
				\label{OSPKN12}
			\end{center}
		\end{figure}
		
		If $b_1$ is odd, $b_3$ is even, and $b_2 = b_1 + m_1$, the unique $T(\bm{b})$ such that $\psi(T(\bm{b})) = \bm{f^{b}}v_{\lambda}$ is shown in Figure~\ref{OSPKN13}.
		\begin{figure}[H]
			\begin{center}
				\begin{tikzpicture}[scale=0.7,
					every node/.style={font=\large},
					brace/.style={decorate,decoration={brace,  amplitude=5pt,raise=2pt}},
					brace mirror/.style={  decorate,decoration={brace,  mirror, amplitude=5pt,raise=2pt}},
					label/.style={midway,font=\scriptsize,outer sep=6pt} ]
					
					\draw (0,0) rectangle (4,1);
					\foreach \x in {0.5,2.5} \node at (\x,0.5) {1};
					\node at (1.5,0.5) {$\cdots$};
					
					\node at (3.5,0.5) {2};
					
					\draw[brace] (0,1.1) -- node[label,above,xshift=-4mm] {$m_{2}-\frac{1}{2}(b_{2}-m_{1}+1)$} (3,1.1);
					
					\draw (3,0) rectangle (14,1);
					\foreach \x in {4.9,6.9} \node at (\x,0.5) {2};
					\node at (5.9,0.5) {$\cdots$};
					\draw[brace] (3,1.1) -- node[label,above,xshift=6mm] {$\frac{1}{2}(b_{2}+m_{1}+1)-\frac{1}{2}b_3$} (7,1.1);
					
					\foreach \x in {8.3,10.3} \node at (\x,0.5) {$\overline{2}$};
					\node at (9.3,0.5) {$\cdots$};
					\draw[brace] (8,1.1) -- node[label,above] {$\frac{1}{2}b_{3}-b_{4}$} (10.6,1.1);
					
					\foreach \x in {11.5,13.5} \node at (\x,0.5) {$\overline{1}$};
					\node at (12.5,0.5) {$\cdots$};
					\draw[brace] (11.3,1.1) -- node[label,above] {$b_{4}$} (14,1.1);
					
					\draw (0,-1) rectangle (4,0);
					\foreach \x in {0.5,2.5} \node at (\x,-0.5) {2};
					\node at (1.5,-0.5) {$\cdots$};
					\draw[brace mirror] (0,-1.1) -- node[label,below] {$m_{2}-\frac{1}{2}(b_{1}-1)-1$} (3,-1.1);
					
					\node at (3.5,-0.5) {0};
					
					\draw (3,-1) rectangle (9,0);
					\foreach \x in {5,8} \node at (\x,-0.5) {$\overline{1}$};
					\node at (6.5,-0.5) {$\cdots$};
					\draw[brace mirror] (4,-1.1) -- node[label,below] {$\frac{1}{2}(b_{2}-m_{1}-1)$} (9,-1.1);
					
				\end{tikzpicture}
				\caption{}
				\label{OSPKN13}
			\end{center}
		\end{figure}
		
		\noindent \textbf{Case (iii)} $b_2 < m_1$:
		
		If $b_1$ and $b_3$ are even, the unique $T(\bm{b})$ such that $\psi(T(\bm{b})) = \bm{f^{b}}v_{\lambda}$ is shown in Figure~\ref{OSPKN14}. 
		\begin{figure}[H]
			\begin{center}
				\begin{tikzpicture}[scale=0.7,
					every node/.style={font=\large},
					brace/.style={decorate,decoration={brace,  amplitude=5pt,raise=2pt}},
					brace mirror/.style={  decorate,decoration={brace,  mirror, amplitude=5pt,raise=2pt}},
					label/.style={midway,font=\scriptsize,outer sep=6pt} ]
					
					\draw (0,0) rectangle (3,1);
					\foreach \x in {0.5,2.5} \node at (\x,0.5) {1};
					\node at (1.5,0.5) {$\cdots$};
					
					\draw (3,0) rectangle (6,1);
					\foreach \x in {3.5,5.5,6.5,8.5} \node at (\x,0.5) {1};
					\node at (4.5,0.5) {$\cdots$};
					\node at (7.5,0.5) {$\cdots$};
					
					\draw[brace] (0,1.1) -- node[label,above] {$m_{1}+m_{2}-b_2$} (8.5,1.1);
					
					\draw (6,0) rectangle (18,1);
					\foreach \x in {9.5,11.5} \node at (\x,0.5) {2};
					\node at (10.5,0.5) {$\cdots$};
					\draw[brace] (9.4,1.1) -- node[label,above] {$b_{2}-\frac{1}{2}b_3$} (11.6,1.1);
					
					\foreach \x in {12.5,14.5} \node at (\x,0.5) {$\overline{2}$};
					\node at (13.5,0.5) {$\cdots$};
					\draw[brace] (12.3,1.1) -- node[label,above] {$\frac{1}{2}b_{3}-b_{4}$} (14.6,1.1);
					
					\foreach \x in {15.5,17.5} \node at (\x,0.5) {$\overline{1}$};
					\node at (16.5,0.5) {$\cdots$};
					\draw[brace] (15.3,1.1) -- node[label,above] {$b_{4}$} (18,1.1);

					\draw (0,-1) rectangle (3,0);
					\foreach \x in {0.5,2.5} \node at (\x,-0.5) {2};
					\node at (1.5,-0.5) {$\cdots$};
					\draw[brace mirror] (0,-1.1) -- node[label,below] {$m_{2}-\frac{1}{2}b_{1}$} (3,-1.1);
					
					\draw (3,-1) rectangle (6,0);
					
					\foreach \x in {3.5,5.5} \node at (\x,-0.5) {$\overline{2}$};
					\node at (4.5,-0.5) {$\cdots$};
					\draw[brace mirror] (3,-1.1) -- node[label,below] {$\frac{1}{2}b_{1}$} (6,-1.1);
					
				\end{tikzpicture}
				\caption{}
				\label{OSPKN14}
			\end{center}
		\end{figure}
		
		If $b_1$ is even and $b_3$ is odd, the unique $T(\bm{b})$ such that $\psi(T(\bm{b})) = \bm{f^{b}}v_{\lambda}$ is shown in Figure~\ref{OSPKN15}. 
		\begin{figure}[H]
			\begin{center}
				\begin{tikzpicture}[scale=0.7,
					every node/.style={font=\large},
					brace/.style={decorate,decoration={brace,  amplitude=5pt,raise=2pt}},
					brace mirror/.style={  decorate,decoration={brace,  mirror, amplitude=5pt,raise=2pt}},
					label/.style={midway,font=\scriptsize,outer sep=6pt} ]
					
					\draw (0,0) rectangle (3,1);
					\foreach \x in {0.5,2.5} \node at (\x,0.5) {1};
					\node at (1.5,0.5) {$\cdots$};
					
					\draw (3,0) rectangle (6,1);
					\foreach \x in {3.5,5.5,6.5,8.5} \node at (\x,0.5) {1};
					\node at (4.5,0.5) {$\cdots$};
					\node at (7.5,0.5) {$\cdots$};
					
					\draw[brace] (0,1.1) -- node[label,above] {$m_{1}+m_{2}-b_2$} (8.5,1.1);
					
					\draw (6,0) rectangle (19,1);
					\foreach \x in {9.5,11.5} \node at (\x,0.5) {2};
					\node at (10.5,0.5) {$\cdots$};
					\draw[brace] (9.4,1.1) -- node[label,above] {$b_{2}-\frac{1}{2}(b_{3}-1)-1$} (11.6,1.1);
					
					\foreach \x in {13.5,15.5} \node at (\x,0.5) {$\overline{2}$};
					\node at (12.5,0.5) {0};
					\node at (14.5,0.5) {$\cdots$};
					\draw[brace] (13.3,1.1) -- node[label,above] {$\frac{1}{2}(b_{3}-1)-b_{4}$} (15.6,1.1);
					
					\foreach \x in {16.5,18.5} \node at (\x,0.5) {$\overline{1}$};
					\node at (17.5,0.5) {$\cdots$};
					\draw[brace] (16.3,1.1) -- node[label,above] {$b_{4}$} (19,1.1);

					\draw (0,-1) rectangle (3,0);
					\foreach \x in {0.5,2.5} \node at (\x,-0.5) {2};
					\node at (1.5,-0.5) {$\cdots$};
					\draw[brace mirror] (0,-1.1) -- node[label,below] {$m_{2}-\frac{1}{2}b_{1}$} (3,-1.1);
					
					\draw (3,-1) rectangle (6,0);
					
					\foreach \x in {3.5,5.5} \node at (\x,-0.5) {$\overline{2}$};
					\node at (4.5,-0.5) {$\cdots$};
					\draw[brace mirror] (3,-1.1) -- node[label,below] {$\frac{1}{2}b_{1}$} (6,-1.1);
					
				\end{tikzpicture}
				\caption{}
				\label{OSPKN15}
			\end{center}
		\end{figure}
		
		If $b_1$ and $b_3$ are odd, the unique $T(\bm{b})$ such that $\psi(T(\bm{b})) = \bm{f^{b}}v_{\lambda}$ is shown in Figure~\ref{OSPKN16}. 
		\begin{figure}[H]
			\begin{center}
				\begin{tikzpicture}[scale=0.7,
					every node/.style={font=\large},
					brace/.style={decorate,decoration={brace,  amplitude=5pt,raise=2pt}},
					brace mirror/.style={  decorate,decoration={brace,  mirror, amplitude=5pt,raise=2pt}},
					label/.style={midway,font=\scriptsize,outer sep=6pt} ]
					
					\draw (-1,0) rectangle (3,1);
					\foreach \x in {-0.5,1.5,2.5} \node at (\x,0.5) {1};
					\node at (0.5,0.5) {$\cdots$};
					
					\draw (2,0) rectangle (6,1);
					\foreach \x in {3.5,5.5,6.5,8.3} \node at (\x,0.5) {1};
					\node at (4.5,0.5) {$\cdots$};
					\node at (7.4,0.5) {$\cdots$};
					
					\draw[brace] (-1,1.1) -- node[label,above] {$m_{1}+m_{2}-b_2$} (8.3,1.1);
					
					\draw (6,0) rectangle (17.8,1);
					\foreach \x in {9.2,11} \node at (\x,0.5) {2};
					\node at (10.1,0.5) {$\cdots$};
					\draw[brace] (9,1.1) -- node[label,above,xshift=-1mm] {$b_{2}-\frac{1}{2}(b_{3}-1)-1$} (11.1,1.1);
					
					\foreach \x in {12.8,14.6} \node at (\x,0.5) {$\overline{2}$};
					\node at (11.9,0.5) {0};
					\node at (13.7,0.5) {$\cdots$};
					\draw[brace] (12.6,1.1) -- node[label,above] {$\frac{1}{2}(b_{3}-1)-b_{4}$} (14.7,1.1);
					
					\foreach \x in {15.5,17.3} \node at (\x,0.5) {$\overline{1}$};
					\node at (16.4,0.5) {$\cdots$};
					\draw[brace] (15.3,1.1) -- node[label,above] {$b_{4}$} (17.8,1.1);

					\draw (-1,-1) rectangle (3,0);
					\foreach \x in {-0.5,1.5} \node at (\x,-0.5) {2};
					\node at (0.5,-0.5) {$\cdots$};
					\node at (2.5,-0.5) {0};
					\draw[brace mirror] (-1,-1.1) -- node[label,below] {$m_{2}-\frac{1}{2}(b_{1}-1)-1$} (2,-1.1);
					
					\draw (2,-1) rectangle (6,0);
					
					\foreach \x in {3.5,5.5} \node at (\x,-0.5) {$\overline{2}$};
					\node at (4.5,-0.5) {$\cdots$};
					\draw[brace mirror] (3,-1.1) -- node[label,below] {$\frac{1}{2}(b_{1}-1)$} (6,-1.1);
					
				\end{tikzpicture}
				\caption{}
				\label{OSPKN16}
			\end{center}
		\end{figure}
		
		If $b_1$ is odd and $b_3$ is even, the unique $T(\bm{b})$ such that $\psi(T(\bm{b})) = \bm{f^{b}}v_{\lambda}$ is shown in Figure~\ref{OSPKN17}.
		\begin{figure}[H]
			\begin{center}
				\begin{tikzpicture}[scale=0.7,
					every node/.style={font=\large},
					brace/.style={decorate,decoration={brace,  amplitude=5pt,raise=2pt}},
					brace mirror/.style={  decorate,decoration={brace,  mirror, amplitude=5pt,raise=2pt}},
					label/.style={midway,font=\scriptsize,outer sep=6pt} ]
					
					\draw (-1,0) rectangle (3,1);
					\foreach \x in {-0.5,1.5,2.5} \node at (\x,0.5) {1};
					\node at (0.5,0.5) {$\cdots$};
					
					\draw (2,0) rectangle (6,1);
					\foreach \x in {3.5,5.5,6.5,8.5} \node at (\x,0.5) {1};
					\node at (4.5,0.5) {$\cdots$};
					\node at (7.5,0.5) {$\cdots$};
					
					\draw[brace] (-1,1.1) -- node[label,above] {$m_{1}+m_{2}-b_2$} (8.5,1.1);
					
					\draw (6,0) rectangle (18,1);
					\foreach \x in {9.5,11.5} \node at (\x,0.5) {2};
					\node at (10.5,0.5) {$\cdots$};
					\draw[brace] (9.4,1.1) -- node[label,above] {$b_{2}-\frac{1}{2}b_{3}$} (11.6,1.1);
					
					\foreach \x in {12.5,14.5} \node at (\x,0.5) {$\overline{2}$};
					\node at (13.5,0.5) {$\cdots$};
					\draw[brace] (12.4,1.1) -- node[label,above] {$\frac{1}{2}b_{3}-b_{4}$} (14.6,1.1);
					
					\foreach \x in {15.5,17.5} \node at (\x,0.5) {$\overline{1}$};
					\node at (16.5,0.5) {$\cdots$};
					\draw[brace] (15.4,1.1) -- node[label,above] {$b_{4}$} (18,1.1);

					\draw (-1,-1) rectangle (3,0);
					\foreach \x in {-0.5,1.5} \node at (\x,-0.5) {2};
					\node at (0.5,-0.5) {$\cdots$};
					\node at (2.5,-0.5) {0};
					\draw[brace mirror] (-1,-1.1) -- node[label,below] {$m_{2}-\frac{1}{2}(b_{1}-1)-1$} (2,-1.1);
					
					\draw (2,-1) rectangle (6,0);
					
					\foreach \x in {3.5,5.5} \node at (\x,-0.5) {$\overline{2}$};
					\node at (4.5,-0.5) {$\cdots$};
					\draw[brace mirror] (3,-1.1) -- node[label,below] {$\frac{1}{2}(b_{1}-1)$} (6,-1.1);
				\end{tikzpicture}
				\caption{}
				\label{OSPKN17}
			\end{center}
		\end{figure}
		
	Therefore, $\psi$ is a bijection, which completes the proof.
	\end{proof}
	
	Let \( T(\bm{b}) \) denote the KN tableau of $\mathfrak{spo}(4|1)$ corresponding to the Verma vector \( \bm{f^{b}}v_{\lambda} \). Then we have the following theorem.
	
	\begin{theorem}
		The weight of the Verma vector $\bm{f^{b}}v_{\lambda}$ is equal to the weight of its corresponding KN tableau $T(\bm{b})$ of $\mathfrak{spo}(4|1)$.
	\end{theorem}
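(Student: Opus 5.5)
Both weights can be written as explicit linear functions of $\bm b=(b_1,b_2,b_3,b_4)$ and the row lengths, and the plan is to show that these functions agree.

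\emph{Weight of the Verma vector.} Since $f_1$ lowers weight by $\alpha_1=\epsilon_1-\epsilon_2$ and $f_2$ lowers weight by $\alpha_2=\epsilon_2$, the vector $\bm{f^{b}}v_\lambda$ has weight
\[
\lambda-(b_2+b_4)\alpha_1-(b_1+b_3)\alpha_2=(m_1+m_2-b_2-b_4)\epsilon_1+(m_2+b_2+b_4-b_1-b_3)\epsilon_2 .
\]
This holds whenever the vector is nonzero. If it were zero, the statement would still be read formally, as a statement about the monomial's weight; linear independence is proved separately in Section 5.

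\emph{Weight of the tableau.} For the second step I would avoid the seventeen figures and argue directly from the definition of $\psi$, since $T(\bm b)$ is defined by $\psi(T(\bm b))=\bm{f^{b}}v_\lambda$. Write $x_a$ (resp.\ $y_a$) for the number of entries equal to $a$ in the first (resp.\ second) row of $T$. The definitions of the $b_i$ then read
\[
b_4=x_{\bar1},\quad b_3=2(x_{\bar2}+x_{\bar1})+x_0,\quad b_2=(\lambda_1-x_1)+y_{\bar1},\quad b_1=2(y_{\bar2}+y_{\bar1})+y_0 .
\]
The only further input is that the second row contains no $1$. This holds because columns strictly increase, except at $0$, and $1$ is the minimal letter.

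Substituting, the $\epsilon_1$-coefficient becomes
\[
m_1+m_2-b_2-b_4=\lambda_1-(\lambda_1-x_1)-y_{\bar1}-x_{\bar1}=x_1-x_{\bar1}-y_{\bar1}=k_1-k_{\bar1}.
\]
For the $\epsilon_2$-coefficient I would use $\lambda_2=m_2=y_2+y_0+y_{\bar2}+y_{\bar1}$ and $\lambda_1=x_1+x_2+x_0+x_{\bar2}+x_{\bar1}$. Expanding $m_2+b_2+b_4-b_1-b_3$ then gives
\[
(y_2+y_0+y_{\bar2}+y_{\bar1})+(x_2+x_0+x_{\bar2}+x_{\bar1})+y_{\bar1}+x_{\bar1}-2y_{\bar2}-2y_{\bar1}-y_0-2x_{\bar2}-2x_{\bar1}-x_0 .
\]
This simplifies to $x_2+y_2-x_{\bar2}-y_{\bar2}=k_2-k_{\bar2}$, which is the $\epsilon_2$-coefficient of $\mathrm{wt}(T)$.

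The only point needing care is justifying $y_1=0$. One could instead cross-check a few of the figures, for instance Figure~\ref{OSPKN1}, as a sanity test, but the uniform counting argument makes the case analysis unnecessary.
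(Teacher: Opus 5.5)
Your proof is correct, but it takes a different route from the paper.

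The paper computes $\mu=\lambda-(b_2+b_4)\alpha_1-(b_1+b_3)\alpha_2$ exactly as you do. It then checks $\mathrm{wt}(T(\bm{b}))=\mu$ separately in each of the seventeen cases of Figures~\ref{OSPKN1}--\ref{OSPKN17}. In each case it reads the multiplicities of $1,2,0,\overline{2},\overline{1}$ off the braces and simplifies, sometimes using a boundary relation, e.g.\ $b_3=b_2+m_1$ for Figure~\ref{OSPKN7}.

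You instead observe that the four statistics defining $\psi$ are linear in the row multiplicities $x_a,y_a$. Hence $\lambda-(b_2+b_4)\alpha_1-(b_1+b_3)\alpha_2=\mathrm{wt}(T)$ holds for every filling of shape $\lambda$ with no $1$ in the second row, in particular every column-strict one. The theorem then follows from the defining property that the statistics of $T(\bm{b})$ are exactly $\bm{b}$. That is precisely how $T(\bm{b})$ is constructed in the proof of Theorem~\ref{KN-correspondence}.

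What each approach buys:
\begin{itemize}
\item Your argument is shorter and uniform. It does not depend on the explicit entries of the figures being right, only on the existence of a tableau with statistics $\bm{b}$.
\item The paper's case check adds nothing for this statement beyond an incidental consistency check on the figures (row lengths and counts), which your approach makes unnecessary.
\end{itemize}

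Your caveat about the monomial possibly being zero is a refinement the paper leaves implicit. It is settled anyway by the linear independence proved in Section~5.
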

	
	\begin{proof}
		Let \( \bm{b} = (b_{1}, b_{2}, b_{3}, b_{4}) \), and denote the weight of \( \bm{f^{b}}v_{\lambda} \) by \( \mu \). Then
		\begin{align*}
			\mu &=\lambda-b_1\epsilon_2-b_2(\epsilon_1-\epsilon_2)-b_3\epsilon_2-b_4(\epsilon_1-\epsilon_2)  
			\\&=(m_1+m_2)\epsilon_1+m_2\epsilon_2-b_1\epsilon_2-b_2(\epsilon_1-\epsilon_2)-b_3\epsilon_2-b_4(\epsilon_1-\epsilon_2)  
			\\&=(m_1+m_2-b_2-b_4)\epsilon_1+(m_2-b_1+b_2-b_3+b_4)\epsilon_2.
		\end{align*}
		Next, we calculate the weight of $T(\bm{b})$ according to the cases for $\bm{b}=(b_{1}, b_{2}, b_{3}, b_{4})$.
		
		\noindent \textbf{Case (i)} $b_2 \geq m_1$, and $\frac{1}{2}(b_2 - m_1) \in \mathbb{Z}_{\geq 0}$:
		
		If \( b_1 \) and \( b_3 \) are even, then \( T(\bm{b}) \) is as shown in Figure~\ref{OSPKN1}. It follows that
		\begin{align*}
			\mathrm{wt}(T(\bm{b}))&=\Big((m_2-\tfrac{1}{2}(b_2-m_1))-b_4-\tfrac{1}{2}(b_2-m_1)\Big)\epsilon_1 \\
			&\quad +\Big((\tfrac{1}{2}(b_2+m_1)-\tfrac{1}{2}b_3)+(m_2-\tfrac{1}{2}b_1) \\
			&\quad -(\tfrac{1}{2}b_3-b_4)-(\tfrac{1}{2}b_1-\tfrac{1}{2}(b_2-m_1))\Big)\epsilon_2 \\
			&=(m_1 + m_2 - b_2 - b_4)\epsilon_1 + (m_2 - b_1 + b_2 - b_3 + b_4)\epsilon_2.
		\end{align*}
		
		If \( b_1 \) is even and \( b_3 \) is odd, then \( T(\bm{b}) \) is as shown in Figure~\ref{OSPKN2}. It follows that
		\begin{align*}
			\mathrm{wt}(T(\bm{b}))&=\Big((m_2-\tfrac{1}{2}(b_2-m_1))-b_4-\tfrac{1}{2}(b_2-m_1)\Big)\epsilon_1 \\
			&\quad +\Big((\tfrac{1}{2}(b_2+m_1)-\tfrac{1}{2}(b_3-1)-1) +(m_2-\tfrac{1}{2}b_1) \\
			&\quad -(\tfrac{1}{2}(b_3-1)-b_4)-(\tfrac{1}{2}b_1-\tfrac{1}{2}(b_2-m_1))\Big)\epsilon_2 \\
			&=(m_1 + m_2 - b_2 - b_4)\epsilon_1 + (m_2 - b_1 + b_2 - b_3 + b_4)\epsilon_2.
		\end{align*}
		
		If \( b_1 \) and \( b_3 \) are odd, then \( T(\bm{b}) \) is as shown in Figure~\ref{OSPKN3}. It follows that
		\begin{align*}
			\mathrm{wt}(T(\bm{b}))&=\Big((m_2-\tfrac{1}{2}(b_2-m_1))-b_4-\tfrac{1}{2}(b_2-m_1)\Big)\epsilon_1 \\
			&\quad +\Big((\tfrac{1}{2}(b_2+m_1)-\tfrac{1}{2}(b_3-1)-1) +(m_2-\tfrac{1}{2}(b_1-1)-1) \\
			&\quad -(\tfrac{1}{2}(b_3-1)-b_4)-(\tfrac{1}{2}(b_1-1)-\tfrac{1}{2}(b_2-m_1))\Big)\epsilon_2 \\
			&=(m_1 + m_2 - b_2 - b_4)\epsilon_1 + (m_2 - b_1 + b_2 - b_3 + b_4)\epsilon_2.
		\end{align*}
		
		If \( b_1 \) is odd and \( b_3 \) is even, then \( T(\bm{b}) \) is as shown in Figure~\ref{OSPKN4}. It follows that
		\begin{align*}
			\mathrm{wt}(T(\bm{b}))&=\Big((m_2-\tfrac{1}{2}(b_2-m_1))-b_4-\tfrac{1}{2}(b_2-m_1)\Big)\epsilon_1 \\
			&\quad +\Big((\tfrac{1}{2}(b_2+m_1)-\tfrac{1}{2}b_3) +(m_2-\tfrac{1}{2}(b_1-1)-1) \\
			&\quad -(\tfrac{1}{2}b_3-b_4)-(\tfrac{1}{2}(b_1-1)-\tfrac{1}{2}(b_2-m_1))\Big)\epsilon_2 \\
			&=(m_1 + m_2 - b_2 - b_4)\epsilon_1 + (m_2 - b_1 + b_2 - b_3 + b_4)\epsilon_2.
		\end{align*}
		
		\noindent \textbf{Case (ii)} $b_2 \geq m_1$, and $\frac{1}{2}(b_2 - m_1 - 1) \in \mathbb{Z}_{\geq 0}$:
		
		If \( b_1 \) and \( b_3 \) are even, then \( T(\bm{b}) \) is as shown in Figure~\ref{OSPKN5}. It follows that
		\begin{align*}
			\mathrm{wt}(T(\bm{b}))&=\Big((m_2-\tfrac{1}{2}(b_2-m_1+1))-b_4-\tfrac{1}{2}(b_2-m_1-1)\Big)\epsilon_1 \\
			&\quad +\Big((\tfrac{1}{2}(b_2+m_1+1)-\tfrac{1}{2}b_3)+(m_2-\tfrac{1}{2}b_1) \\
			&\quad -(\tfrac{1}{2}b_3-b_4)-(\tfrac{1}{2}b_1-\tfrac{1}{2}(b_2-m_1-1))\Big)\epsilon_2 \\
			&=(m_1 + m_2 - b_2 - b_4)\epsilon_1 + (m_2 - b_1 + b_2 - b_3 + b_4)\epsilon_2.
		\end{align*}
		
		If \( b_1 \) is even, \( b_3 \) is odd, and \( b_3 \neq b_2 + m_1 \), then \( T(\bm{b}) \) is as shown in Figure~\ref{OSPKN6}. It follows that
		\begin{align*}
			\mathrm{wt}(T(\bm{b}))&=\Big((m_2-\tfrac{1}{2}(b_2-m_1+1))-b_4-\tfrac{1}{2}(b_2-m_1-1)\Big)\epsilon_1 \\
			&\quad +\Big((\tfrac{1}{2}(b_2+m_1+1)-\tfrac{1}{2}(b_3-1)-1) +(m_2-\tfrac{1}{2}b_1) \\
			&\quad -(\tfrac{1}{2}(b_3-1)-b_4)-(\tfrac{1}{2}b_1-\tfrac{1}{2}(b_2-m_1-1))\Big)\epsilon_2 \\
			&=(m_1 + m_2 - b_2 - b_4)\epsilon_1 + (m_2 - b_1 + b_2 - b_3 + b_4)\epsilon_2.
		\end{align*}
		
		If \( b_1 \) is even, \( b_3 \) is odd, and \( b_3 = b_2 + m_1 \), then \( T(\bm{b}) \) is as shown in Figure~\ref{OSPKN7}. It follows that
		\begin{align*}
			\mathrm{wt}(T(\bm{b}))&=\Big((m_2-\tfrac{1}{2}(b_2-m_1+1))-b_4-\tfrac{1}{2}(b_2-m_1-1)\Big)\epsilon_1 \\
			&\quad +\Big((m_2-\tfrac{1}{2}b_1)-(\tfrac{1}{2}(b_3-1)-b_4)  -(\tfrac{1}{2}b_1-\tfrac{1}{2}(b_2-m_1-1))\Big)\epsilon_2 \\
			&=(m_1 + m_2 - b_2 - b_4)\epsilon_1 + (m_2 - \tfrac{1}{2}m_1 + \tfrac{1}{2}b_2 - b_1 - \tfrac{1}{2}b_3 + b_4)\epsilon_2 \\
			&=(m_1 + m_2 - b_2 - b_4)\epsilon_1 + (m_2 - b_1 + b_2 - b_3 + b_4)\epsilon_2.
		\end{align*}
		
		If \( b_1 \) and \( b_3 \) are odd, with \( b_2 \neq b_1 + m_1 \) and \( b_3 \neq b_2 + m_1 \), then \( T(\bm{b}) \) is as shown in Figure~\ref{OSPKN8}. It follows that
		\begin{align*}
			\mathrm{wt}(T(\bm{b}))&=\Big((m_2-\tfrac{1}{2}(b_2-m_1+1))-b_4-\tfrac{1}{2}(b_2-m_1-1)\Big)\epsilon_1 \\
			&\quad +\Big((\tfrac{1}{2}(b_2+m_1+1)-\tfrac{1}{2}(b_3-1)-1) \\
			&\quad +(m_2-\tfrac{1}{2}(b_1-1)-1)-(\tfrac{1}{2}(b_3-1)-b_4) \\
			&\quad -(\tfrac{1}{2}(b_1-1)-\tfrac{1}{2}(b_2-m_1-1))\Big)\epsilon_2 \\
			&=(m_1 + m_2 - b_2 - b_4)\epsilon_1 + (m_2 - b_1 + b_2 - b_3 + b_4)\epsilon_2.
		\end{align*}
		
		If \( b_1 \) and \( b_3 \) are odd, with \( b_2 \neq b_1 + m_1 \) and \( b_3 = b_2 + m_1 \), then \( T(\bm{b}) \) is as shown in Figure~\ref{OSPKN9}. It follows that
		\begin{align*}
			\mathrm{wt}(T(\bm{b}))&=\Big((m_2-\tfrac{1}{2}(b_2-m_1+1))-b_4-\tfrac{1}{2}(b_2-m_1-1)\Big)\epsilon_1 \\
			&\quad +\Big((m_2-\tfrac{1}{2}(b_1-1)-1)-(\tfrac{1}{2}(b_3-1)-b_4) \\
			&\quad -(\tfrac{1}{2}(b_1-1)-\tfrac{1}{2}(b_2-m_1-1))\Big)\epsilon_2 \\
			&=(m_1 + m_2 - b_2 - b_4)\epsilon_1 + (m_2 - b_1 + b_2 - b_3 + b_4)\epsilon_2.
		\end{align*}
		
		If \( b_1 \) and \( b_3 \) are odd, with \( b_2 = b_1 + m_1 \) and \( b_3 \neq b_2 + m_1 \), then \( T(\bm{b}) \) is as shown in Figure~\ref{OSPKN10}. It follows that
		\begin{align*}
			\mathrm{wt}(T(\bm{b}))&=\Big((m_2-\tfrac{1}{2}(b_2-m_1+1))-b_4-\tfrac{1}{2}(b_2-m_1-1)\Big)\epsilon_1 \\
			&\quad +\Big((\tfrac{1}{2}(b_2+m_1+1)-\tfrac{1}{2}(b_3-1)-1) \\
			&\quad +(m_2-\tfrac{1}{2}(b_1-1)-1)-(\tfrac{1}{2}(b_3-1)-b_4)\Big)\epsilon_2 \\
			&=(m_1 + m_2 - b_2 - b_4)\epsilon_1 + (m_2 - b_1 + b_2 - b_3 + b_4)\epsilon_2.
		\end{align*}
		
		If \( b_1 \) and \( b_3 \) are odd, with \( b_2 = b_1 + m_1 \) and \( b_3 = b_2 + m_1 \), then \( T(\bm{b}) \) is as shown in Figure~\ref{OSPKN11}. It follows that
		\begin{align*}
			\mathrm{wt}(T(\bm{b}))&=\Big((m_2-\tfrac{1}{2}(b_2-m_1+1))-b_4-\tfrac{1}{2}(b_2-m_1-1)\Big)\epsilon_1 \\
			&\quad +\Big((m_2-\tfrac{1}{2}(b_1-1)-1)-(\tfrac{1}{2}(b_3-1)-b_4)\Big)\epsilon_2 \\
			&=(m_1 + m_2 - b_2 - b_4)\epsilon_1 + (m_2 - b_1 + b_2 - b_3 + b_4)\epsilon_2.
		\end{align*}
		
		If \( b_1 \) is odd, \( b_3 \) is even, and \( b_2 \neq b_1 + m_1 \), then \( T(\bm{b}) \) is as shown in Figure~\ref{OSPKN12}. It follows that
		\begin{align*}
			\mathrm{wt}(T(\bm{b}))&=\Big((m_2-\tfrac{1}{2}(b_2-m_1+1))-b_4-\tfrac{1}{2}(b_2-m_1-1)\Big)\epsilon_1 \\
			&\quad +\Big((\tfrac{1}{2}(b_2+m_1+1)-\tfrac{1}{2}b_3) +(m_2-\tfrac{1}{2}(b_1-1)-1) \\
			&\quad -(\tfrac{1}{2}b_3-b_4)-(\tfrac{1}{2}(b_1-1)-\tfrac{1}{2}(b_2-m_1-1))\Big)\epsilon_2 \\
			&=(m_1 + m_2 - b_2 - b_4)\epsilon_1 + (m_2 - b_1 + b_2 - b_3 + b_4)\epsilon_2.
		\end{align*}
		
		If \( b_1 \) is odd, \( b_3 \) is even, and \( b_2 = b_1 + m_1 \), then \( T(\bm{b}) \) is as shown in Figure~\ref{OSPKN13}. It follows that
		\begin{align*}
			\mathrm{wt}(T(\bm{b}))&=\Big((m_2-\tfrac{1}{2}(b_2-m_1+1))-b_4-\tfrac{1}{2}(b_2-m_1-1)\Big)\epsilon_1 \\
			&\quad +\Big((\tfrac{1}{2}(b_2+m_1+1)-\tfrac{1}{2}b_3) +(m_2-\tfrac{1}{2}(b_1-1)-1)-(\tfrac{1}{2}b_3-b_4)\Big)\epsilon_2 \\
			&=(m_1 + m_2 - b_2 - b_4)\epsilon_1 + (m_2 - b_1 + b_2 - b_3 + b_4)\epsilon_2.
		\end{align*}
		
		\noindent \textbf{Case (iii)} $b_2 < m_1$:
		
		If \( b_1, b_3 \) are even, then \( T(\bm{b}) \) is as shown in Figure~\ref{OSPKN14}. It follows that
		\begin{align*}
			\mathrm{wt}(T(\bm{b}))&=\Big((m_1+m_2-b_2)-b_4\Big)\epsilon_1 \\
			&\quad +\Big((b_2-\tfrac{1}{2}b_3)+(m_2-\tfrac{1}{2}b_1)-(\tfrac{1}{2}b_3-b_4)-\tfrac{1}{2}b_1\Big)\epsilon_2 \\
			&=(m_1 + m_2 - b_2 - b_4)\epsilon_1 + (m_2 - b_1 + b_2 - b_3 + b_4)\epsilon_2.
		\end{align*}
		
		If \( b_1 \) is even and \( b_3 \) is odd, then \( T(\bm{b}) \) is as shown in Figure~\ref{OSPKN15}. It follows that
		\begin{align*}
			\mathrm{wt}(T(\bm{b}))&=\Big((m_1+m_2-b_2)-b_4\Big)\epsilon_1 \\
			&\quad +\Big((b_2-\tfrac{1}{2}(b_3-1)-1) +(m_2-\tfrac{1}{2}b_1)-(\tfrac{1}{2}(b_3-1)-b_4)-\tfrac{1}{2}b_1\Big)\epsilon_2 \\
			&=(m_1 + m_2 - b_2 - b_4)\epsilon_1 + (m_2 - b_1 + b_2 - b_3 + b_4)\epsilon_2.
		\end{align*}
		
		If \(b_1, b_3\) are odd, then \(T(\bm{b})\) is as shown in Figure~\ref{OSPKN16}. It follows that
		\begin{align*}
			\mathrm{wt}(T(\bm{b}))&=\Big((m_1+m_2-b_2)-b_4\Big)\epsilon_1 \\
			&\quad +\Big((b_2-\tfrac{1}{2}(b_3-1)-1) +(m_2-\tfrac{1}{2}(b_1-1)-1) \\
			&\quad -(\tfrac{1}{2}(b_3-1)-b_4)-\tfrac{1}{2}(b_1-1)\Big)\epsilon_2 \\
			&=(m_1 + m_2 - b_2 - b_4)\epsilon_1 + (m_2 - b_1 + b_2 - b_3 + b_4)\epsilon_2.
		\end{align*}
		
		If \(b_1\) is odd and \(b_3\) is even, then \(T(\bm{b})\) is as shown in Figure~\ref{OSPKN17}. It follows that
		\begin{align*}
			\mathrm{wt}(T(\bm{b}))&=\Big((m_1+m_2-b_2)-b_4\Big)\epsilon_1 \\
			&\quad +\Big((b_2-\tfrac{1}{2}b_3)+(m_2-\tfrac{1}{2}(b_1-1)-1) -(\tfrac{1}{2}b_3-b_4)-\tfrac{1}{2}(b_1-1)\Big)\epsilon_2 \\
			&=(m_1 + m_2 - b_2 - b_4)\epsilon_1 + (m_2 - b_1 + b_2 - b_3 + b_4)\epsilon_2.
		\end{align*}
		
		In summary, we have $\mathrm{wt}(T(\bm{b})) = \mu$.
	\end{proof}

	\begin{example}\label{example}
		Let the highest weight of the irreducible representation $L(\lambda)$ of $\mathfrak{spo}(4|1)$ be $\lambda = \omega_{1} + 4\omega_{2} = 3\epsilon_{1} + 2\epsilon_{2}$. The Verma vectors of $L(\lambda)$ have the form
		$
		f_1^{b_4} f_2^{b_3} f_1^{b_2} f_2^{b_1} v_{\lambda},
		$
		where the parameters satisfy
		\[ 
		0 \leq b_{1} \leq 4, \quad 
		0 \leq b_{2} \leq 1 + b_{1}, \quad 
		0 \leq b_{3} \leq \min\{b_{2} + 1, 2b_{2}\}, \quad 
		0 \leq b_{4} \leq \min\{1, \tfrac{1}{2}b_{3}\}. 
		\]
		
		These Verma vectors can be represented by KN tableaux of $\mathfrak{spo}(4|1)$ as follows:
		
\begin{center}
	\renewcommand{\arraystretch}{2.5}
	\setlength{\tabcolsep}{12pt} 
	\begin{longtable}{ccccc}

		\YoungWithData{
			\begin{ytableau}
				1 & 1 & 1 \\
				2 & 2
			\end{ytableau}
		}{v_{\lambda}}
		&
		\YoungWithData{
			\begin{ytableau}
				1 & 1 & 1 \\
				2 & 0
			\end{ytableau}
		}{f_2 v_{\lambda}}
		&
		\YoungWithData{
			\begin{ytableau}
				1 & 1 & 1 \\
				2 & \overline{2}
			\end{ytableau}
		}{f_2^2 v_{\lambda}}
		&
		\YoungWithData{
			\begin{ytableau}
				1 & 1 & 1 \\
				0 & \overline{2}
			\end{ytableau}
		}{f_2^3 v_{\lambda}}
		&
		\YoungWithData{
			\begin{ytableau}
				1 & 1 & 1 \\
				\overline{2} & \overline{2}
			\end{ytableau}
		}{f_2^4 v_{\lambda}} \\[12pt]

		\YoungWithData{
			\begin{ytableau}
				1 & 1 & 2 \\
				2 & 2
			\end{ytableau}
		}{f_1 v_{\lambda}}
		&
		\YoungWithData{
			\begin{ytableau}
				1 & 1 & 2 \\
				2 & 0
			\end{ytableau}
		}{f_1 f_2 v_{\lambda}}
		&
		\YoungWithData{
			\begin{ytableau}
				1 & 1 & 2 \\
				2 & \overline{2}
			\end{ytableau}
		}{f_1 f_2^2 v_{\lambda}}
		&
		\YoungWithData{
			\begin{ytableau}
				1 & 1 & 2 \\
				0 & \overline{2}
			\end{ytableau}
		}{f_1 f_2^3 v_{\lambda}}
		&
		\YoungWithData{
			\begin{ytableau}
				1 & 1 & 2 \\
				\overline{2} & \overline{2}
			\end{ytableau}
		}{f_1 f_2^4 v_{\lambda}} \\[12pt]

		\YoungWithData{
			\begin{ytableau}
				1 & 1 & 0 \\
				2 & 2
			\end{ytableau}
		}{f_2 f_1 v_{\lambda}}
		&
		\YoungWithData{
			\begin{ytableau}
				1 & 1 & 0 \\
				2 & 0
			\end{ytableau}
		}{f_2 f_1 f_2 v_{\lambda}}
		&
		\YoungWithData{
			\begin{ytableau}
				1 & 1 & 0 \\
				2 & \overline{2}
			\end{ytableau}
		}{f_2 f_1 f_2^2 v_{\lambda}}
		&
		\YoungWithData{
			\begin{ytableau}
				1 & 1 & 0 \\
				0 & \overline{2}
			\end{ytableau}
		}{f_2 f_1 f_2^3 v_{\lambda}}
		&
		\YoungWithData{
			\begin{ytableau}
				1 & 1 & 0 \\
				\overline{2} & \overline{2}
			\end{ytableau}
		}{f_2 f_1 f_2^4 v_{\lambda}} \\[12pt]

		\YoungWithData{
			\begin{ytableau}
				1 & 1 & \overline{2} \\
				2 & 2
			\end{ytableau}
		}{f_2^2 f_1 v_{\lambda}}
		&
		\YoungWithData{
			\begin{ytableau}
				1 & 1 & \overline{2} \\
				2 & 0
			\end{ytableau}
		}{f_2^2 f_1 f_2 v_{\lambda}}
		&
		\YoungWithData{
			\begin{ytableau}
				1 & 1 & \overline{2} \\
				2 & \overline{2}
			\end{ytableau}
		}{f_2^2 f_1 f_2^2 v_{\lambda}}
		&
		\YoungWithData{
			\begin{ytableau}
				1 & 1 & \overline{2} \\
				0 & \overline{2}
			\end{ytableau}
		}{f_2^2 f_1 f_2^3 v_{\lambda}}
		&
		\YoungWithData{
			\begin{ytableau}
				1 & 1 & \overline{2} \\
				\overline{2} & \overline{2}
			\end{ytableau}
		}{f_2^2 f_1 f_2^4 v_{\lambda}} \\[12pt]

		\YoungWithData{
			\begin{ytableau}
				1 & 1 & \overline{1} \\
				2 & 2
			\end{ytableau}
		}{f_1 f_2^2 f_1 v_{\lambda}}
		&
		\YoungWithData{
			\begin{ytableau}
				1 & 1 & \overline{1} \\
				2 & 0
			\end{ytableau}
		}{f_1 f_2^2 f_1 f_2 v_{\lambda}}
		&
		\YoungWithData{
			\begin{ytableau}
				1 & 1 & \overline{1} \\
				2 & \overline{2}
			\end{ytableau}
		}{f_1 f_2^2 f_1 f_2^2 v_{\lambda}}
		&
		\YoungWithData{
			\begin{ytableau}
				1 & 1 & \overline{1} \\
				0 & \overline{2}
			\end{ytableau}
		}{f_1 f_2^2 f_1 f_2^3 v_{\lambda}}
		&
		\YoungWithData{
			\begin{ytableau}
				1 & 1 & \overline{1} \\
				\overline{2} & \overline{2}
			\end{ytableau}
		}{f_{1}f_{2}^{2}f_{1}f_{2}^{4}v_{\lambda}} \\[12pt]

		\YoungWithData{
			\begin{ytableau}
				1 & 2 & 2 \\
				2 & 0
			\end{ytableau}
		}{f_{1}^{2}f_{2}v_{\lambda}}
		&
		\YoungWithData{
			\begin{ytableau}
				1 & 2 & 2 \\
				2 & \overline{2}
			\end{ytableau}
		}{f_{1}^{2}f_{2}^{2}v_{\lambda}}
		&
		\YoungWithData{
			\begin{ytableau}
				1 & 2 & 2 \\
				2 & \overline{1}
			\end{ytableau}
		}{f_{1}^{3}f_{2}^{2}v_{\lambda}}
		&
		\YoungWithData{
			\begin{ytableau}
				1 & 2 & 2 \\
				0 & \overline{2}
			\end{ytableau}
		}{f_{1}^{2}f_{2}^{3}v_{\lambda}}
		&
		\YoungWithData{
			\begin{ytableau}
				1 & 2 & 2 \\
				0 & \overline{1}
			\end{ytableau}
		}{f_{1}^{3}f_{2}^{3}v_{\lambda}} \\[12pt]

		\YoungWithData{
			\begin{ytableau}
				1 & 2 & 2 \\
				\overline{2} & \overline{2}
			\end{ytableau}
		}{f_{1}^{2}f_{2}^{4}v_{\lambda}}
		&
		\YoungWithData{
			\begin{ytableau}
				1 & 2 & 2 \\
				\overline{2} & \overline{1}
			\end{ytableau}
		}{f_{1}^{3}f_{2}^{4}v_{\lambda}}
		&
		\YoungWithData{
			\begin{ytableau}
				1 & 2 & 0 \\
				2 & 0
			\end{ytableau}
		}{f_{2}f_{1}^{2}f_{2}v_{\lambda}}
		&
		\YoungWithData{
			\begin{ytableau}
				1 & 2 & 0 \\
				2 & \overline{2}
			\end{ytableau}
		}{f_{2}f_{1}^{2}f_{2}^{2}v_{\lambda}}
		&
		\YoungWithData{
			\begin{ytableau}
				1 & 2 & 0 \\
				2 & \overline{1}
			\end{ytableau}
		}{f_{2}f_{1}^{3}f_{2}^{2}v_{\lambda}} \\[12pt]

		\YoungWithData{
			\begin{ytableau}
				1 & 2 & 0 \\
				0 & \overline{2}
			\end{ytableau}
		}{f_{2}f_{1}^{2}f_{2}^{3}v_{\lambda}}
		&
		\YoungWithData{
			\begin{ytableau}
				1 & 2 & 0 \\
				0 & \overline{1}
			\end{ytableau}
		}{f_{2}f_{1}^{3}f_{2}^{3}v_{\lambda}}
		&
		\YoungWithData{
			\begin{ytableau}
				1 & 2 & 0 \\
				\overline{2} & \overline{2}
			\end{ytableau}
		}{f_{2}f_{1}^{2}f_{2}^{4}v_{\lambda}}
		&
		\YoungWithData{
			\begin{ytableau}
				1 & 2 & 0 \\
				\overline{2} & \overline{1}
			\end{ytableau}
		}{f_{2}f_{1}^{3}f_{2}^{4}v_{\lambda}}
		&
		\YoungWithData{
			\begin{ytableau}
				1 & 2 & \overline{2} \\
				2 & 0
			\end{ytableau}
		}{f_{2}^{2}f_{1}^{2}f_{2}v_{\lambda}} \\[12pt]

		\YoungWithData{
			\begin{ytableau}
				1 & 2 & \overline{2} \\
				2 & \overline{2}
			\end{ytableau}
		}{f_{2}^{2} f_{1}^{2} f_{2}^{2} v_{\lambda}}
		&
		\YoungWithData{
			\begin{ytableau}
				1 & 2 & \overline{2} \\
				2 & \overline{1}
			\end{ytableau}
		}{f_{2}^{2} f_{1}^{3} f_{2}^{2} v_{\lambda}}
		&
		\YoungWithData{
			\begin{ytableau}
				1 & 2 & \overline{2} \\
				0 & \overline{2}
			\end{ytableau}
		}{f_{2}^{2} f_{1}^{2} f_{2}^{3} v_{\lambda}}
		&
		\YoungWithData{
			\begin{ytableau}
				1 & 2 & \overline{2} \\
				0 & \overline{1}
			\end{ytableau}
		}{f_{2}^{2} f_{1}^{3} f_{2}^{3} v_{\lambda}}
		&
		\YoungWithData{
			\begin{ytableau}
				1 & 2 & \overline{2} \\
				\overline{2} & \overline{2}
			\end{ytableau}
		}{f_{2}^{2} f_{1}^{2} f_{2}^{4} v_{\lambda}} \\[12pt]

		\YoungWithData{
			\begin{ytableau}
				1 & 2 & \overline{2} \\
				\overline{2} & \overline{1}
			\end{ytableau}
		}{f_{2}^{2} f_{1}^{3} f_{2}^{4} v_{\lambda}}
		&
		\YoungWithData{
			\begin{ytableau}
				1 & 2 & \overline{1} \\
				2 & 0
			\end{ytableau}
		}{f_{1} f_{2}^{2} f_{1}^{2} f_{2} v_{\lambda}}
		&
		\YoungWithData{
			\begin{ytableau}
				1 & 2 & \overline{1} \\
				2 & \overline{2}
			\end{ytableau}
		}{f_{1} f_{2}^{2} f_{1}^{2} f_{2}^{2} v_{\lambda}}
		&
		\YoungWithData{
			\begin{ytableau}
				1 & 2 & \overline{1} \\
				2 & \overline{1}
			\end{ytableau}
		}{f_{1} f_{2}^{2} f_{1}^{3} f_{2}^{2} v_{\lambda}}
		&
		\YoungWithData{
			\begin{ytableau}
				1 & 2 & \overline{1} \\
				0 & \overline{2}
			\end{ytableau}
		}{f_{1} f_{2}^{2} f_{1}^{2} f_{2}^{3} v_{\lambda}} \\[12pt]

		\YoungWithData{
			\begin{ytableau}
				1 & 2 & \overline{1} \\
				0 & \overline{1}
			\end{ytableau}
		}{f_{1} f_{2}^{2} f_{1}^{3} f_{2}^{3} v_{\lambda}}
		&
		\YoungWithData{
			\begin{ytableau}
				1 & 2 & \overline{1} \\
				\overline{2} & \overline{2}
			\end{ytableau}
		}{f_{1} f_{2}^{2} f_{1}^{2} f_{2}^{4} v_{\lambda}}
		&
		\YoungWithData{
			\begin{ytableau}
				1 & 2 & \overline{1} \\
				\overline{2} & \overline{1}
			\end{ytableau}
		}{f_{1} f_{2}^{2} f_{1}^{3} f_{2}^{4} v_{\lambda}}
		&
		\YoungWithData{
			\begin{ytableau}
				1 & 0 & \overline{2} \\
				2 & 0
			\end{ytableau}
		}{f_{2}^{3} f_{1}^{2} f_{2} v_{\lambda}}
		&
		\YoungWithData{
			\begin{ytableau}
				1 & 0 & \overline{2} \\
				2 & \overline{2}
			\end{ytableau}
		}{f_{2}^{3} f_{1}^{2} f_{2}^{2} v_{\lambda}} \\[12pt]

		\YoungWithData{
			\begin{ytableau}
				1 & 0 & \overline{2} \\
				2 & \overline{1}
			\end{ytableau}
		}{f_{2}^{3} f_{1}^{3} f_{2}^{2} v_{\lambda}}
		&
		\YoungWithData{
			\begin{ytableau}
				1 & 0 & \overline{2} \\
				0 & \overline{2}
			\end{ytableau}
		}{f_{2}^{3} f_{1}^{2} f_{2}^{3} v_{\lambda}}
		&
		\YoungWithData{
			\begin{ytableau}
				1 & 0 & \overline{2} \\
				0 & \overline{1}
			\end{ytableau}
		}{f_{2}^{3} f_{1}^{3} f_{2}^{3} v_{\lambda}}
		&
		\YoungWithData{
			\begin{ytableau}
				1 & 0 & \overline{2} \\
				\overline{2} & \overline{2}
			\end{ytableau}
		}{f_{2}^{3} f_{1}^{2} f_{2}^{4} v_{\lambda}}
		&
		\YoungWithData{
			\begin{ytableau}
				1 & 0 & \overline{2} \\
				\overline{2} & \overline{1}
			\end{ytableau}
		}{f_{2}^{3} f_{1}^{3} f_{2}^{4} v_{\lambda}} \\[12pt]

		\YoungWithData{
			\begin{ytableau}
				1 & 0 & \overline{1} \\
				2 & 0
			\end{ytableau}
		}{f_{1} f_{2}^{3} f_{1}^{2} f_{2} v_{\lambda}}
		&
		\YoungWithData{
			\begin{ytableau}
				1 & 0 & \overline{1} \\
				2 & \overline{2}
			\end{ytableau}
		}{f_{1} f_{2}^{3} f_{1}^{2} f_{2}^{2} v_{\lambda}}
		&
		\YoungWithData{
			\begin{ytableau}
				1 & 0 & \overline{1} \\
				2 & \overline{1}
			\end{ytableau}
		}{f_{1} f_{2}^{3} f_{1}^{3} f_{2}^{2} v_{\lambda}}
		&
		\YoungWithData{
			\begin{ytableau}
				1 & 0 & \overline{1} \\
				0 & \overline{2}
			\end{ytableau}
		}{f_{1} f_{2}^{3} f_{1}^{2} f_{2}^{3} v_{\lambda}}
		&
		\YoungWithData{
			\begin{ytableau}
				1 & 0 & \overline{1} \\
				0 & \overline{1}
			\end{ytableau}
		}{f_{1} f_{2}^{3} f_{1}^{3} f_{2}^{3} v_{\lambda}} \\[12pt]

		\YoungWithData{
			\begin{ytableau}
				1 & 0 & \overline{1} \\
				\overline{2} & \overline{2}
			\end{ytableau}
		}{f_{1} f_{2}^{3} f_{1}^{2} f_{2}^{4} v_{\lambda}}
		&
		\YoungWithData{
			\begin{ytableau}
				1 & 0 & \overline{1} \\
				\overline{2} & \overline{1}
			\end{ytableau}
		}{f_{1} f_{2}^{3} f_{1}^{3} f_{2}^{4} v_{\lambda}}
		&
		\YoungWithData{
			\begin{ytableau}
				1 & \overline{2} & \overline{2} \\
				2 & \overline{1}
			\end{ytableau}
		}{f_{2}^{4} f_{1}^{3} f_{2}^{2} v_{\lambda}}
		&
		\YoungWithData{
			\begin{ytableau}
				1 & \overline{2} & \overline{2} \\
				0 & \overline{1}
			\end{ytableau}
		}{f_{2}^{4} f_{1}^{3} f_{2}^{3} v_{\lambda}}
		&
		\YoungWithData{
			\begin{ytableau}
				1 & \overline{2} & \overline{2} \\
				\overline{2} & \overline{1}
			\end{ytableau}
		}{f_{2}^{4} f_{1}^{3} f_{2}^{4} v_{\lambda}} \\[12pt]

		\YoungWithData{
			\begin{ytableau}
				1 & \overline{2} & \overline{1} \\
				2 & \overline{1}
			\end{ytableau}
		}{f_{1} f_{2}^{4} f_{1}^{3} f_{2}^{2} v_{\lambda}}
		&
		\YoungWithData{
			\begin{ytableau}
				1 & \overline{2} & \overline{1} \\
				0 & \overline{1}
			\end{ytableau}
		}{f_{1} f_{2}^{4} f_{1}^{3} f_{2}^{3} v_{\lambda}}
		&
		\YoungWithData{
			\begin{ytableau}
				1 & \overline{2} & \overline{1} \\
				\overline{2} & \overline{1}
			\end{ytableau}
		}{f_{1} f_{2}^{4} f_{1}^{3} f_{2}^{4} v_{\lambda}}
		&
		\YoungWithData{
			\begin{ytableau}
				2 & 2 & 2 \\
				0 & \overline{1}
			\end{ytableau}
		}{f_{1}^{4} f_{2}^{3} v_{\lambda}}
		&
		\YoungWithData{
			\begin{ytableau}
				2 & 2 & 2 \\
				\overline{2} & \overline{1}
			\end{ytableau}
		}{f_{1}^{4} f_{2}^{4} v_{\lambda}} \\[12pt]

		\YoungWithData{
			\begin{ytableau}
				2 & 2 & 2 \\
				\overline{1} & \overline{1}
			\end{ytableau}
		}{f_{1}^{5} f_{2}^{4} v_{\lambda}}
		&
		\YoungWithData{
			\begin{ytableau}
				2 & 2 & 0 \\
				0 & \overline{1}
			\end{ytableau}
		}{f_{2}f_{1}^{4}f_{2}^{3}v_{\lambda}}
		&
		\YoungWithData{
			\begin{ytableau}
				2 & 2 & 0 \\
				\overline{2} & \overline{1}
			\end{ytableau}
		}{f_{2}f_{1}^{4}f_{2}^{4}v_{\lambda}}
		&
		\YoungWithData{
			\begin{ytableau}
				2 & 2 & 0 \\
				\overline{1} & \overline{1}
			\end{ytableau}
		}{f_{2}f_{1}^{5}f_{2}^{4}v_{\lambda}}
		&
		\YoungWithData{
			\begin{ytableau}
				2 & 2 & \overline{2} \\
				0 & \overline{1}
			\end{ytableau}
		}{f_{2}^{2}f_{1}^{4}f_{2}^{3}v_{\lambda}} \\[12pt]

		\YoungWithData{
			\begin{ytableau}
				2 & 2 & \overline{2} \\
				\overline{2} & \overline{1}
			\end{ytableau}
		}{f_{2}^{2}f_{1}^{4}f_{2}^{4}v_{\lambda}}
		&
		\YoungWithData{
			\begin{ytableau}
				2 & 2 & \overline{2} \\
				\overline{1} & \overline{1}
			\end{ytableau}
		}{f_{2}^{2}f_{1}^{5}f_{2}^{4}v_{\lambda}}
		&
		\YoungWithData{
			\begin{ytableau}
				2 & 2 & \overline{1} \\
				0 & \overline{1}
			\end{ytableau}
		}{f_{1}f_{2}^{2}f_{1}^{4}f_{2}^{3}v_{\lambda}}
		&
		\YoungWithData{
			\begin{ytableau}
				2 & 2 & \overline{1} \\
				\overline{2} & \overline{1}
			\end{ytableau}
		}{f_{1}f_{2}^{2}f_{1}^{4}f_{2}^{4}v_{\lambda}}
		&
		\YoungWithData{
			\begin{ytableau}
				2 & 2 & \overline{1} \\
				\overline{1} & \overline{1}
			\end{ytableau}
		}{f_{1}f_{2}^{2}f_{1}^{5}f_{2}^{4}v_{\lambda}} \\[12pt]

		\YoungWithData{
			\begin{ytableau}
				2 & 0 & \overline{2} \\
				0 & \overline{1}
			\end{ytableau}
		}{f_{2}^{3}f_{1}^{4}f_{2}^{3}v_{\lambda}}
		&
		\YoungWithData{
			\begin{ytableau}
				2 & 0 & \overline{2} \\
				\overline{2} & \overline{1}
			\end{ytableau}
		}{f_{2}^{3}f_{1}^{4}f_{2}^{4}v_{\lambda}}
		&
		\YoungWithData{
			\begin{ytableau}
				2 & 0 & \overline{2} \\
				\overline{1} & \overline{1}
			\end{ytableau}
		}{f_{2}^{3}f_{1}^{5}f_{2}^{4}v_{\lambda}}
		&
		\YoungWithData{
			\begin{ytableau}
				2 & 0 & \overline{1} \\
				0 & \overline{1}
			\end{ytableau}
		}{f_{1}f_{2}^{3}f_{1}^{4}f_{2}^{3}v_{\lambda}}
		&
		\YoungWithData{
			\begin{ytableau}
				2 & 0 & \overline{1} \\
				\overline{2} & \overline{1}
			\end{ytableau}
		}{f_{1}f_{2}^{3}f_{1}^{4}f_{2}^{4}v_{\lambda}} \\[12pt]

		\YoungWithData{
			\begin{ytableau}
				2 & 0 & \overline{1} \\
				\overline{1} & \overline{1}
			\end{ytableau}
		}{f_{1}f_{2}^{3}f_{1}^{5}f_{2}^{4}v_{\lambda}}
		&
		\YoungWithData{
			\begin{ytableau}
				2 & \overline{2} & \overline{2} \\
				0 & \overline{1}
			\end{ytableau}
		}{f_{2}^{4}f_{1}^{4}f_{2}^{3}v_{\lambda}}
		&
		\YoungWithData{
			\begin{ytableau}
				2 & \overline{2} & \overline{2} \\
				\overline{2} & \overline{1}
			\end{ytableau}
		}{f_{2}^{4}f_{1}^{4}f_{2}^{4}v_{\lambda}}
		&
		\YoungWithData{
			\begin{ytableau}
				2 & \overline{2} & \overline{2} \\
				\overline{1} & \overline{1}
			\end{ytableau}
		}{f_{2}^{4}f_{1}^{5}f_{2}^{4}v_{\lambda}}
		&
		\YoungWithData{
			\begin{ytableau}
				2 & \overline{2} & \overline{1} \\
				0 & \overline{1}
			\end{ytableau}
		}{f_{1}f_{2}^{4}f_{1}^{4}f_{2}^{3}v_{\lambda}} \\[12pt]

		\YoungWithData{
			\begin{ytableau}
				2 & \overline{2} & \overline{1} \\
				\overline{2} & \overline{1}
			\end{ytableau}
		}{f_{1}f_{2}^{4}f_{1}^{4}f_{2}^{4}v_{\lambda}}
		&
		\YoungWithData{
			\begin{ytableau}
				2 & \overline{2} & \overline{1} \\
				\overline{1} & \overline{1}
			\end{ytableau}
		}{f_{1}f_{2}^{4}f_{1}^{5}f_{2}^{4}v_{\lambda}}
		&
		\YoungWithData{
			\begin{ytableau}
				0 & \overline{2} & \overline{2} \\
				0 & \overline{1}
			\end{ytableau}
		}{f_{2}^{5}f_{1}^{4}f_{2}^{3}v_{\lambda}}
		&
		\YoungWithData{
			\begin{ytableau}
				0 & \overline{2} & \overline{2} \\
				\overline{2} & \overline{1}
			\end{ytableau}
		}{f_{2}^{5}f_{1}^{4}f_{2}^{4}v_{\lambda}}
		&
		\YoungWithData{
			\begin{ytableau}
				0 & \overline{2} & \overline{2} \\
				\overline{1} & \overline{1}
			\end{ytableau}
		}{f_{2}^{5}f_{1}^{5}f_{2}^{4}v_{\lambda}} \\[12pt]

		\YoungWithData{
			\begin{ytableau}
				0 & \overline{2} & \overline{1} \\
				0 & \overline{1}
			\end{ytableau}
		}{f_{1}f_{2}^{5}f_{1}^{4}f_{2}^{3}v_{\lambda}}
		&
		\YoungWithData{
			\begin{ytableau}
				0 & \overline{2} & \overline{1} \\
				\overline{2} & \overline{1}
			\end{ytableau}
		}{f_{1}f_{2}^{5}f_{1}^{4}f_{2}^{4}v_{\lambda}}
		&
		\YoungWithData{
			\begin{ytableau}
				0 & \overline{2} & \overline{1} \\
				\overline{1} & \overline{1}
			\end{ytableau}
		}{f_{1}f_{2}^{5}f_{1}^{5}f_{2}^{4}v_{\lambda}}
		&
		\YoungWithData{
			\begin{ytableau}
				\overline{2} & \overline{2} & \overline{2} \\
				\overline{1} & \overline{1}
			\end{ytableau}
		}{f_{2}^{6}f_{1}^{5}f_{2}^{4}v_{\lambda}}
		&
		\YoungWithData{
			\begin{ytableau}
				\overline{2} & \overline{2} & \overline{1} \\
				\overline{1} & \overline{1}
			\end{ytableau}
		}{f_{1}f_{2}^{6}f_{1}^{5}f_{2}^{4}v_{\lambda}}
	\end{longtable}
\end{center}
	\end{example}
	
	\section{Linear Independence of the Verma Vectors of $L(\lambda)$}
	In this section, we prove that the Verma vectors of the finite dimensional irreducible representation $L(\lambda)$ of $\mathfrak{spo}(4|1)$ are linearly independent. The proof of linear independence follows the method given in \cite{Raghavan1999}.
	
	We first consider the natural representation $V = \mathbb{C}^{4|1}$ of $\mathfrak{spo}(4|1)$. Let $\{\varepsilon_1, \varepsilon_2, \varepsilon_3, \varepsilon_4, \varepsilon_5\}$ be the standard basis of $V$:
	\[ 
	\varepsilon_1=\begin{pmatrix} 1 \\ 0 \\ 0 \\ 0 \\ 0 \end{pmatrix}, \quad
	\varepsilon_2=\begin{pmatrix} 0 \\ 1 \\ 0 \\ 0 \\ 0 \end{pmatrix}, \quad
	\varepsilon_3=\begin{pmatrix} 0 \\ 0 \\ 1 \\ 0 \\ 0 \end{pmatrix}, \quad
	\varepsilon_4=\begin{pmatrix} 0 \\ 0 \\ 0 \\ 1 \\ 0 \end{pmatrix}, \quad
	\varepsilon_5=\begin{pmatrix} 0 \\ 0 \\ 0 \\ 0 \\ 1 \end{pmatrix}.
	\]
	Here, $V_{\bar{0}} = \operatorname{span}\{\varepsilon_1, \varepsilon_2, \varepsilon_3, \varepsilon_4\}$ and $V_{\bar{1}} = \operatorname{span}\{\varepsilon_5\}$. The highest weight of $V$ is $\epsilon_1$, and $\varepsilon_1$ is the highest weight vector of $V$. The Verma vectors are given by
	\[ 
	\varepsilon_1=\begin{pmatrix} 1 \\ 0 \\ 0 \\ 0 \\ 0 \end{pmatrix}, \quad
	f_1\varepsilon_1=\begin{pmatrix} 0 \\ 1 \\ 0 \\ 0 \\ 0 \end{pmatrix}, \quad
	f_2f_1\varepsilon_1=\begin{pmatrix} 0 \\ 0 \\ 0 \\ 0 \\ 1 \end{pmatrix}, \quad
	f_2^{2}f_1\varepsilon_1=\begin{pmatrix} 0 \\ 0 \\ 0 \\ -1 \\ 0 \end{pmatrix}, \quad
	f_1f_2^{2}f_1\varepsilon_1=\begin{pmatrix} 0 \\ 0 \\ 1 \\ 0 \\ 0 \end{pmatrix},
	\]
	where $f_{1}=E_{21}-E_{34}$ and $f_{2}=E_{52}-E_{45}$. Therefore, the Verma vectors of the natural representation $V$ are linearly independent. We set $\varepsilon_0 \coloneqq \varepsilon_5$, $\varepsilon_{\overline{2}} \coloneqq -\varepsilon_4$, and $\varepsilon_{\overline{1}} \coloneqq \varepsilon_3$. Then the set of Verma vectors of $V$ is $\{\varepsilon_1, \varepsilon_2, \varepsilon_0, \varepsilon_{\overline{2}}, \varepsilon_{\overline{1}}\}$. We have
	\[ 
	f_1\varepsilon_1 = \varepsilon_2, \quad 
	f_2\varepsilon_2 = \varepsilon_0, \quad 
	f_2\varepsilon_0 = \varepsilon_{\overline{2}}, \quad 
	f_1\varepsilon_{\overline{2}} = \varepsilon_{\overline{1}}. 
	\]
	
	We now consider the finite dimensional irreducible representation $L(\lambda)$ of $\mathfrak{spo}(4|1)$ with highest weight $\lambda = (m_1 + m_2)\epsilon_1 + m_2\epsilon_2$. We can realize $L(\lambda)$ as follows. Consider the representation 
	\[ 
	W \coloneqq  V^{\otimes m_1} \otimes (\wedge^{2} V)^{\otimes m_2},
	\] 
	where $\wedge$ denotes the super exterior product, that is, the exterior product on the even part of $V$ and the symmetric product on the odd part of $V$. Then the element 
	\[ 
	v_\lambda \coloneqq \varepsilon_1^{\otimes m_1} \otimes (\varepsilon_1 \wedge \varepsilon_2)^{\otimes m_2} 
	\] 
	of $W$ is a maximal vector, and the $\mathfrak{spo}(4|1)$-submodule of $W$ generated by $v_\lambda$ is a model for $L(\lambda)$.
	
	By weakening the conditions in the definition of the KN tableaux of $\mathfrak{spo}(4|1)$, we define the column-strict Young tableaux of $\mathfrak{spo}(4|1)$ as follows.
	
	\begin{definition}
		A column-strict Young tableau $Y$ of $\mathfrak{spo}(4|1)$ of shape $\lambda$ is a filling of the Young diagram of shape $\lambda$ with entries from $\mathcal{N}=\{1, 2, 0, \overline{2}, \overline{1}\}$, such that the entries in $Y$ are strictly increasing down each column, but $0$ may repeat. Let $\mathrm{CST}_{\lambda}(4|1)$ denote the set of all column-strict Young tableaux of $\mathfrak{spo}(4|1)$ of shape $\lambda$.
	\end{definition}
	
	Each column-strict Young tableau $Y \in \mathrm{CST}_{\lambda}(4|1)$ corresponds to a vector $u(Y) \in W$. The correspondence is defined as follows:
	\[
	Y = \begin{ytableau}
		j_{m_2} & \cdots & j_{1} & i_{m_1}& \cdots & i_1\\
		k_{m_2} & \cdots & k_{1}
	\end{ytableau}
	\, \longmapsto \, u(Y) = \varepsilon_{i_1} \otimes \cdots \otimes \varepsilon_{i_{m_1}} \otimes
	(\varepsilon_{j_1} \wedge \varepsilon_{k_1}) \otimes \cdots \otimes (\varepsilon_{j_{m_2}} \wedge \varepsilon_{k_{m_2}}).
	\]
	For example:
	\[
	Y_1 = \begin{ytableau}
		1 & 0 & \overline{1} \\
		2 & 0
	\end{ytableau}
	\quad \longmapsto \quad u(Y_1) = \varepsilon_{\overline{1}} \otimes (\varepsilon_0 \wedge \varepsilon_0) \otimes (\varepsilon_1 \wedge \varepsilon_2),
	\]
	\[
	Y_2 = \begin{ytableau}
		1 & 0 & \overline{1} \\
		2 & \overline{2}
	\end{ytableau}
	\quad \longmapsto \quad u(Y_2) = \varepsilon_{\overline{1}} \otimes (\varepsilon_0 \wedge \varepsilon_{\overline{2}}) \otimes (\varepsilon_1 \wedge \varepsilon_2).
	\]
	
	\begin{lemma}\label{lem:spo monomials linearly independent}
		Let $K=\{ u(Y) \mid Y \in \mathrm{CST}_{\lambda}(4|1)\}$. Then the vectors in $K$ are linearly independent.
	\end{lemma}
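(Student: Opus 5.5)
The plan is to reduce the statement to the elementary fact that a tensor product of linearly independent families is linearly independent in the tensor product space. Since
\[
W = V^{\otimes m_1} \otimes (\wedge^2 V)^{\otimes m_2},
\]
I will first fix a basis of each tensor factor. For $V$ I take the basis $\{\varepsilon_1,\varepsilon_2,\varepsilon_0,\varepsilon_{\overline{2}},\varepsilon_{\overline{1}}\}$ already introduced; it is a basis because it differs from the standard basis only by the sign $\varepsilon_{\overline 2}=-\varepsilon_4$ and a relabelling. For $\wedge^2 V$, in the super sense (exterior on $V_{\bar 0}$, symmetric on $V_{\bar 1}$), I will check that
\[
\{\varepsilon_a\wedge\varepsilon_b \mid a<b \text{ in } \mathcal N\}\cup\{\varepsilon_0\wedge\varepsilon_0\}
\]
is a basis. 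This amounts to the standard description of the super exterior square: $\wedge^2 V \cong \wedge^2 V_{\bar 0}\oplus (V_{\bar 0}\otimes V_{\bar 1})\oplus S^2 V_{\bar 1}$. Its dimension is $6+4+1=11$, which matches $\binom{5}{2}+1$, the count of pairs $a<b$ plus the single pair $(0,0)$. The product $\varepsilon_0\wedge\varepsilon_0$ is nonzero precisely because $\varepsilon_0$ is odd. Any remaining ordering signs, such as $\varepsilon_b\wedge\varepsilon_a=\pm\varepsilon_a\wedge\varepsilon_b$, are harmless, because only nonvanishing up to scalar matters.

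Next I use the column-strict condition to match tableaux with basis elements. The columns of a $Y\in\mathrm{CST}_\lambda(4|1)$ come in two kinds. A column of height $2$ has entries $j<k$, or $j=k=0$, which is exactly the index set of the basis of $\wedge^2V$ above. A column of height $1$ has an arbitrary entry $i\in\mathcal N$. Consequently the map
\[
Y\mapsto u(Y)=\varepsilon_{i_1}\otimes\cdots\otimes\varepsilon_{i_{m_1}}\otimes(\varepsilon_{j_1}\wedge\varepsilon_{k_1})\otimes\cdots\otimes(\varepsilon_{j_{m_2}}\wedge\varepsilon_{k_{m_2}})
\]
sends $\mathrm{CST}_\lambda(4|1)$ bijectively onto the set of pure tensors of basis elements of the factors. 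Two points make this work. First, no monotonicity is imposed along rows, so every choice of basis vector in every factor occurs. Second, the positions of the columns determine which tensor slot each one occupies, so distinct tableaux give distinct tuples of basis elements.

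Finally, pure tensors of basis vectors form a basis of a tensor product. Hence the set $K$ is a basis of $W$, and in particular it is linearly independent. A sign convention from the super tensor product could only rescale individual elements by $\pm1$, which does not affect independence.

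The main point to verify carefully is the basis claim for the super exterior square. That is, I must confirm that $\varepsilon_0\wedge\varepsilon_0\neq 0$ while $\varepsilon_a\wedge\varepsilon_a=0$ for $a\ne 0$, and that the wedges of distinct basis vectors stay independent. I would settle this by realizing $\wedge^2V$ as the quotient of $V\otimes V$ by the span of $x\otimes y+(-1)^{|x||y|}y\otimes x$ and comparing dimensions. Once that is in place, the rest is bookkeeping.
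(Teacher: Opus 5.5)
Your proposal is correct and takes the same route as the paper. The paper simply asserts that the pure tensors $\varepsilon_{i_1}\otimes\cdots\otimes(\varepsilon_{j_{m_2}}\wedge\varepsilon_{k_{m_2}})$ with $j_s<k_s$ or $j_s=k_s=0$ form a basis $\mathcal B$ of $W$, and then notes $K=\mathcal B$; your check that $\dim\wedge^2V=6+4+1=11$ for the super exterior square, together with your explicit injectivity of $Y\mapsto u(Y)$, supplies the details the paper leaves implicit.
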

	\begin{proof}
		The set
		\[ \mathcal{B}=
		\left\{
		\varepsilon_{i_1} \otimes \cdots \otimes \varepsilon_{i_{m_1}} \otimes
		(\varepsilon_{j_1} \wedge \varepsilon_{k_1}) \otimes \cdots \otimes (\varepsilon_{j_{m_2}} \wedge \varepsilon_{k_{m_2}})
		\;\middle|\;
		\begin{aligned}
			& i_r, j_s, k_s \in \{1, 2, 0, \overline{2}, \overline{1}\}, \\
			& j_s < k_s \text{ or } j_s = k_s=0, \\
			& r = 1, \dots, m_1,\,s = 1, \dots, m_2
		\end{aligned}
		\right\}
		\]
		forms a basis for the representation space $W$. It is clear that $K\subseteq \mathcal{B}$ (in fact, $K= \mathcal{B}$), and thus the vectors in $K$ are linearly independent.
	\end{proof}
	
	Next, we define a total order on $\mathrm{CST}_{\lambda}(4|1)$. For $Y \in \mathrm{CST}_{\lambda}(4|1)$, let $Y(i,j)$ denote the entry in row $i$ and column $j$ of $Y$. Note that the range of the column index $j$ depends on the row index $i$. We call a pair $(i,j)$ admissible if $Y(i,j)$ makes sense.
	
	\begin{definition}\label{spo total order}
		For admissible pairs $(i,j)$ and $(i',j')$, we say $(i,j) < (i',j')$ if either $j > j'$, or $j = j'$ and $i < i'$. For $Y, Y' \in \mathrm{CST}_{\lambda}(4|1)$ with $Y \neq Y'$, we say $Y < Y'$ if $Y(i,j) < Y'(i,j)$ for the smallest admissible pair $(i,j)$ such that $Y(i,j) \neq Y'(i,j)$. For any $Y, Y' \in \mathrm{CST}_{\lambda}(4|1)$, we say $Y \leq Y'$ if either $Y = Y'$ or $Y < Y'$.
	\end{definition}
	
	Recall the two tableaux $Y_1$ and $Y_2$ in $\mathrm{CST}_{\lambda}(4|1)$ described above. According to Definition \ref{spo total order}, we have $Y_1 \leq Y_2$. Furthermore, the total order defined on $\mathrm{CST}_{\lambda}(4|1)$ naturally extends to the corresponding vectors, so we have $u(Y_1) \leq u(Y_2)$.
	
	\begin{lemma}\label{action of f_2^{b_1}}
		Let $b_1 \leq 2m_2$. The action of $f_2^{b_1}$ on $v_\lambda = \varepsilon_1^{\otimes m_1} \otimes (\varepsilon_1 \wedge \varepsilon_2)^{\otimes m_2}$ is as follows:
		
		\noindent \textnormal{(i)} If $b_1=2k$ is even, where $k \in \mathbb{Z}_{\geq 0}$, then 
		\[ f_2^{2k}v_\lambda = k! \cdot \varepsilon_1^{\otimes m_1} \otimes \sum_{1 \le i_1 < i_2 < \dots < i_k \le m_2} \biggl( \bigotimes_{t=1}^{m_2} w_t^{(i_1,\dots,i_k)} \biggr) ,\]
		where
		\[ 
		w_t^{(i_1,\dots,i_k)} = \begin{cases} 
			\varepsilon_1 \wedge \varepsilon_{\overline{2}}, & t \in \{i_1, \dots, i_k\} ,\\
			\varepsilon_1 \wedge \varepsilon_2, & t \notin \{i_1, \dots, i_k\} .
		\end{cases}
		\]
		
		\noindent \textnormal{(ii)} If $b_1=2k+1$ is odd, where $k \in \mathbb{Z}_{\geq 0}$, then 
		\[ f_2^{2k+1}v_\lambda = k! \cdot \varepsilon_1^{\otimes m_1} \otimes \sum_{1 \le i_1 < \dots < i_k \le m_2} \sum_{j \in \{1, \dots, m_2\} \setminus \{i_1, \dots, i_k\}} \biggl( \bigotimes_{t=1}^{m_2} w_t^{(i_1,\dots,i_k, j)} \biggr), \]
		where
		\[ 
		w_t^{(i_1,\dots,i_k, j)} = \begin{cases} 
			\varepsilon_1 \wedge \varepsilon_{\overline{2}}, &  t \in \{i_1, \dots, i_k\}, \\
			\varepsilon_1 \wedge \varepsilon_0, &  t = j ,\\
			\varepsilon_1 \wedge \varepsilon_2, &  t \notin \{i_1, \dots, i_k\} \cup \{j\}.
		\end{cases}
		\]
	\end{lemma}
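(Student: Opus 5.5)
The plan is to avoid tracking the odd operator $f_2$ step by step and to use instead the even operator $f_2^2$, which acts on tensor products as an ordinary (even) derivation. First I record the action of $f_2=E_{52}-E_{45}$ on $V$ in the notation $\varepsilon_1,\varepsilon_2,\varepsilon_0,\varepsilon_{\overline{2}},\varepsilon_{\overline{1}}$:
\[
f_2\varepsilon_1=0,\quad f_2\varepsilon_2=\varepsilon_0,\quad f_2\varepsilon_0=\varepsilon_{\overline{2}},\quad f_2\varepsilon_{\overline{2}}=f_2\varepsilon_{\overline{1}}=0 .
\]
Hence $f_2^2\varepsilon_2=\varepsilon_{\overline{2}}$, and $f_2^2$ kills every other basis vector.

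Next, I use that $f_2$ is odd, so on any tensor product of supermodules it acts by $f_2\otimes 1+1\otimes f_2$ with the Koszul sign. Squaring, the cross terms are $f_2\otimes f_2$ and $(1\otimes f_2)(f_2\otimes 1)=-f_2\otimes f_2$, so they cancel. Thus $f_2^2$ acts as $f_2^2\otimes 1+1\otimes f_2^2$, an even derivation with no signs. This holds on $V^{\otimes m_1}\otimes(\wedge^2V)^{\otimes m_2}$, and also on $\wedge^2V$, which is a quotient of $V\otimes V$. On $\wedge^2 V$ this gives
\[
f_2^2(\varepsilon_1\wedge\varepsilon_2)=\varepsilon_1\wedge\varepsilon_{\overline{2}},\qquad f_2^2(\varepsilon_1\wedge\varepsilon_{\overline{2}})=0,
\]
and $f_2^2\varepsilon_1=0$.

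For part (i), write $D=f_2^2$ and $b_1=2k$, so that $f_2^{2k}v_\lambda=D^k v_\lambda$. Since $D$ is a derivation that annihilates $\varepsilon_1$ and annihilates each factor after one application, the Leibniz rule expands $D^k$ over ordered choices of $k$ distinct wedge factors. Each unordered $k$-subset $\{i_1<\dots<i_k\}$ arises $k!$ times, which gives the stated formula; it vanishes automatically if $k>m_2$, consistent with $b_1\le 2m_2$.

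For part (ii), I apply one more $f_2$ to the result of (i), now using the super Leibniz rule. The factor $\varepsilon_1^{\otimes m_1}$ and all factors $\varepsilon_1\wedge\varepsilon_{\overline{2}}$ are even, as is $\varepsilon_1\wedge\varepsilon_2$. So every factor that $f_2$ passes over is even, and no signs appear. Moreover $f_2$ kills $\varepsilon_1$ and $\varepsilon_1\wedge\varepsilon_{\overline{2}}$. On the remaining factors,
\[
f_2(\varepsilon_1\wedge\varepsilon_2)=(f_2\varepsilon_1)\wedge\varepsilon_2+\varepsilon_1\wedge f_2\varepsilon_2=\varepsilon_1\wedge\varepsilon_0 ,
\]
since $\varepsilon_1$ is even. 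Summing over $j\notin\{i_1,\dots,i_k\}$ gives the formula in (ii).

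The only delicate step is the sign bookkeeping. I need to confirm that $f_2$ acts on $\wedge^2V$ by the induced super derivation, with $\varepsilon_0\wedge\varepsilon_0\neq0$ symmetric, and that the cancellation making $f_2^2$ sign-free is valid in this convention. I will check this directly on $\varepsilon_1\otimes\varepsilon_2$ before passing to the quotient.
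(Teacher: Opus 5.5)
Your proposal is correct and follows essentially the same route as the paper. Your observation that the cross terms cancel, so that $f_2^2$ acts as a sign-free even derivation on $W$, is exactly the paper's identity $\mathcal{F}^2=\sum_p f_2^{(p)}f_2^{(p)}$, which the paper derives from $f_2^{(p)}f_2^{(q)}=-f_2^{(q)}f_2^{(p)}$ for $p\neq q$; the paper then obtains the factor $k!$ by induction on $k$ rather than by your direct count of ordered choices, and handles part (ii) identically with one further sign-free application of $f_2$. The sign check you flag is harmless: the cancellation follows formally from $f_2$ being odd, and $\varepsilon_0\wedge\varepsilon_0$ never arises because $f_2\varepsilon_1=0$.
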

	\begin{proof}
		Since $f_2\varepsilon_1=0$ and $|\varepsilon_1|=0$, it suffices to consider the action of $f_2^{b_1}$ on $(\varepsilon_1 \wedge \varepsilon_2)^{\otimes m_2}$ in the proof.
		
		For each $1 \leq p \leq m_2$, define the operator $f^{(p)}_2$ as follows:
		\[ f^{(p)}_2(x_1\otimes\cdots\otimes x_{m_2}) =(-1)^{\sum_{i<p}|x_i|}\; x_1\otimes\cdots\otimes f_2(x_p)\otimes\cdots\otimes x_{m_2}, \]
		where $x_1,\dots, x_{m_2} \in \{\varepsilon_1 \wedge \varepsilon_2, \varepsilon_1 \wedge \varepsilon_0, \varepsilon_1 \wedge \varepsilon_{\overline{2}}\}$, and $|\varepsilon_1\wedge\varepsilon_2|=0$, $|\varepsilon_1\wedge\varepsilon_0|=1$, $|\varepsilon_1\wedge\varepsilon_{\overline{2}}|=0$. 
		
		Setting
		\[ \mathcal{F}\coloneqq\sum_{p=1}^{m_2} f^{(p)}_2, \]
		the action of $f_2^{b_1}$ on $(\varepsilon_1 \wedge \varepsilon_2)^{\otimes m_2}$ is equivalent to that of $\mathcal{F}^{b_1}$. It is straightforward to verify that for $p \neq q$, $f^{(p)}_2$ and $f^{(q)}_2$ satisfy
		\[ f^{(p)}_2 f^{(q)}_2 = -\, f^{(q)}_2 f^{(p)}_2. \]
		Consequently, we have
		\[ \sum_{p\ne q} f^{(p)}_2 f^{(q)}_2 = \tfrac12\sum_{p\ne q}\big(f^{(p)}_2f^{(q)}_2+f^{(q)}_2f^{(p)}_2\big)=0. \]
		Therefore,
		\[ \mathcal{F}^2=\sum_{p,q} f^{(p)}_2 f^{(q)}_2=\sum_{p} f^{(p)}_2 f^{(p)}_2. \]
		
		\noindent (i) We now consider the case where $b_1=2k$ is even. We need to show that for any $k \in \mathbb{Z}_{\geq 0}$, we have
		\[ \mathcal{F}^{2k}\big((\varepsilon_1\wedge\varepsilon_2)^{\otimes m_2}\big)
		= k!\sum_{1\le i_1<\cdots<i_k\le m_2} \biggl( \bigotimes_{t=1}^{m_2} w_t^{(i_1,\dots,i_k)} \biggr), \]
		where
		\[ w_t^{(i_1,\dots,i_k)} = \begin{cases} 
			\varepsilon_1 \wedge \varepsilon_{\overline{2}}, & t \in \{i_1, \dots, i_k\} ,\\
			\varepsilon_1 \wedge \varepsilon_2, & t \notin \{i_1, \dots, i_k\} .
		\end{cases} \]
		
		For $k=0$, the claim holds trivially. For $k=1$, since $\mathcal{F}^2=\sum_{p} f^{(p)}_2 f^{(p)}_2$, we have
		\[ \mathcal{F}^2((\varepsilon_1\wedge\varepsilon_2)^{\otimes m_2})
		= \sum_{i=1}^{m_2}
		(\varepsilon_1\wedge\varepsilon_2)^{\otimes (i-1)}
		\otimes (\varepsilon_1\wedge\varepsilon_{\overline{2}})
		\otimes (\varepsilon_1\wedge\varepsilon_2)^{\otimes (m_2-i)}; \]
		thus the claim holds in this case. Assume that the claim holds for some $k \ge 1$. We consider the case for $k+1$:
		\begin{align*}
			\mathcal{F}^{2(k+1)}\big((\varepsilon_1\wedge\varepsilon_2)^{\otimes m_2}\big) 
			&= \mathcal{F}^2 \left( \mathcal{F}^{2k}\big((\varepsilon_1\wedge\varepsilon_2)^{\otimes m_2}\big) \right) \\
			&= \biggl( \sum_{p=1}^{m_2} f^{(p)}_2 f^{(p)}_2 \biggr) \biggl( k! \sum_{1\le i_1<\cdots<i_k\le m_2} \biggl( \bigotimes_{t=1}^{m_2} w_t^{(i_1,\dots,i_k)} \biggr) \biggr).
		\end{align*}
		Since $\bigotimes_{t=1}^{m_2} w_t^{(i_1,\dots,i_k)}$ has $\varepsilon_1 \wedge \varepsilon_{\overline{2}}$ at positions $i_1,\dots,i_k$ and $\varepsilon_1 \wedge \varepsilon_2$ elsewhere, it follows that
		\[ f^{(p)}_2f^{(p)}_2
		\biggl(\bigotimes_{t=1}^{m_2} w_t^{(i_1,\dots,i_k)}\biggr)
		\neq 0
		\quad \text{if and only if}\quad p\notin\{i_1,\dots,i_k\}. \]
		Consequently, we obtain
		\[ \mathcal{F}^{2}
		\biggl(\bigotimes_{t=1}^{m_2} w_t^{(i_1,\dots,i_k)}\biggr)
		=
		\sum_{p\notin\{i_1,\dots,i_k\}}
		\biggl(\bigotimes_{t=1}^{m_2}
		w_t^{(i_1,\dots,i_k,p)}\biggr), \]
		where
		\[ w_t^{(i_1,\dots,i_k,p)} = \begin{cases} 
			\varepsilon_1 \wedge \varepsilon_{\overline{2}}, & t \in \{i_1, \dots, i_k, p\} ,\\
			\varepsilon_1 \wedge \varepsilon_2, & t \notin \{i_1, \dots, i_k, p\} .\end{cases} \]
		Therefore, 
		\begin{align*}
			\mathcal{F}^{2(k+1)}\big((\varepsilon_1\wedge\varepsilon_2)^{\otimes m_2}\big) &= k!\sum_{1\le i_1<\cdots<i_k\le m_2}\sum_{p\notin\{i_1,\dots,i_k\}}\biggl(\bigotimes_{t=1}^{m_2} w_t^{(i_1,\dots,i_k,p)}\biggr)\\
			&=(k+1)! \sum_{1\le j_1<\cdots<j_{k+1}\le m_2}
			\biggl(\bigotimes_{t=1}^{m_2} w_t^{(j_1,\dots,j_{k+1})}\biggr),
		\end{align*}
		where
		\[ w_t^{(j_1,\dots,j_{k+1})} = \begin{cases} 
			\varepsilon_1 \wedge \varepsilon_{\overline{2}}, & t \in \{j_1, \dots, j_{k+1}\} ,\\
			\varepsilon_1 \wedge \varepsilon_2, & t \notin \{j_1, \dots, j_{k+1}\} .
		\end{cases} \]
		Thus, the claim holds for $k+1$. By mathematical induction, the result holds for all $k \in \mathbb{Z}_{\geq 0}$.
		
		\noindent (ii) Next, we consider the case where $b_1=2k+1$ is odd. By the result of (i), we have
		\begin{align*}
			\mathcal{F}^{2k+1}\big((\varepsilon_1\wedge\varepsilon_2)^{\otimes m_2}\big)
			&= \mathcal{F} \left( \mathcal{F}^{2k}\big((\varepsilon_1\wedge\varepsilon_2)^{\otimes m_2}\big) \right) \\
			&= \mathcal{F} \biggl( k! \sum_{1\le i_1<\cdots<i_k\le m_2} \biggl( \bigotimes_{t=1}^{m_2} w_t^{(i_1,\dots,i_k)} \biggr) \biggr) \\
			&= k! \sum_{1\le i_1<\cdots<i_k\le m_2} \mathcal{F} \biggl( \bigotimes_{t=1}^{m_2} w_t^{(i_1,\dots,i_k)} \biggr),
		\end{align*}
		where $\mathcal{F}=\sum_{j=1}^{m_2} f^{(j)}_2$. Since $\bigotimes_{t=1}^{m_2} w_t^{(i_1,\dots,i_k)}$ has $\varepsilon_1 \wedge \varepsilon_{\overline{2}}$ at positions $i_1,\dots,i_k$ and $\varepsilon_1 \wedge \varepsilon_2$ elsewhere, it follows that
		\[ f^{(j)}_2
		\biggl(\bigotimes_{t=1}^{m_2} w_t^{(i_1,\dots,i_k)}\biggr)
		\neq 0
		\quad \text{if and only if}\quad j\notin\{i_1,\dots,i_k\}. \]
		Consequently, we obtain
		\[ \mathcal{F}^{2k+1}\big((\varepsilon_1\wedge\varepsilon_2)^{\otimes m_2}\big)
		= k! \sum_{1 \le i_1 < \dots < i_k \le m_2} \sum_{j \in \{1, \dots, m_2\} \setminus \{i_1, \dots, i_k\}} \biggl( \bigotimes_{t=1}^{m_2} w_t^{(i_1,\dots,i_k, j)} \biggr), \]
		where
		\[ w_t^{(i_1,\dots,i_k, j)} = \begin{cases}
			\varepsilon_1 \wedge \varepsilon_{\overline{2}}, &  t \in \{i_1, \dots, i_k\}, \\
			\varepsilon_1 \wedge \varepsilon_0, &  t = j ,\\
			\varepsilon_1 \wedge \varepsilon_2, &  t \notin \{i_1, \dots, i_k\} \cup \{j\}.
		\end{cases} \]
		Thus, the claim holds.
	\end{proof}
	
	\begin{proposition}\label{spo largest tableau}
		For the Verma vector $\bm{f^{b}} v_{\lambda}$ of $L(\lambda)$, we have
		\[
		\bm{f^{b}} v_{\lambda} =q(T(\bm{b})) u(T(\bm{b})) + \sum_{Y < T(\bm{b})} q(Y) u(Y),
		\]
		where $T(\bm{b})$ is the KN tableau of $\mathfrak{spo}(4|1)$ corresponding to the Verma vector $\bm{f^{b}} v_{\lambda}$, $Y \in \mathrm{CST}_{\lambda}(4|1)$, and $q(T(\bm{b}))\in \mathbb{Z}_{> 0}$, $q(Y) \in \mathbb{Z}$.
	\end{proposition}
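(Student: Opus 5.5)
The plan is to compute $\bm{f^{b}}v_\lambda=f_1^{b_4}f_2^{b_3}f_1^{b_2}f_2^{b_1}v_\lambda$ in four stages inside $W=V^{\otimes m_1}\otimes(\wedge^2V)^{\otimes m_2}$. At each stage we expand the result in the basis $\mathcal B$ of Lemma~\ref{lem:spo monomials linearly independent}, and we track only the leading term with respect to the total order of Definition~\ref{spo total order}. All other terms are checked to be strictly smaller.

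Each $f_i$ acts on $W$ as a sum of operators $f_i^{(p)}$ acting on single tensor factors, with the super sign. So a power $f_i^{N}$ applied to a basis vector is a signed sum of basis vectors. Each is obtained by distributing the $N$ applications among the factors, using $f_1\varepsilon_1=\varepsilon_2$, $f_1\varepsilon_{\overline2}=\varepsilon_{\overline1}$, $f_2\varepsilon_2=\varepsilon_0$ and $f_2\varepsilon_0=\varepsilon_{\overline2}$. In $\wedge^2V$ we also use the Leibniz rule, together with $\varepsilon_0\wedge\varepsilon_0\neq0$ and $\varepsilon_j\wedge\varepsilon_j=0$ for even $j$.

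The first stage is $f_2^{b_1}$, and Lemma~\ref{action of f_2^{b_1}} already gives it. The result is a positive multiple $k!$ of a sum of basis vectors. Each has $\lfloor b_1/2\rfloor$ second-row entries $\overline2$, possibly one $0$, and $2$ elsewhere. Because the order compares columns from the right, the largest of these vectors puts the $\overline2$'s and the $0$ in the rightmost two-row columns. This is exactly the second row of $T(\bm b)$ before the $\overline1$'s are introduced, and its coefficient is positive because all these terms enter with the same sign.

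For the remaining stages I will argue inductively. Applying $f_1^{b_2}$, then $f_2^{b_3}$, then $f_1^{b_4}$ to a combination of the form $q_0u(Y_0)+\sum_{Y<Y_0}q(Y)u(Y)$ produces a new leading term $u(Y_0')$. Here $Y_0'$ is obtained from $Y_0$ by the greedy rule: perform the raises (for instance $1\to2$ and $\overline2\to\overline1$ for $f_1$) in the rightmost admissible positions first. The number of raises landing in the second row is forced by the KN shape recorded in Theorem~\ref{KN-correspondence}. One step needs a monotonicity lemma: if $Y<Y'$, then every term of $f_i^{N}u(Y)$ is smaller than the greedy term of $f_i^{N}u(Y')$. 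I would prove it by comparing at the first differing admissible position, using that the $f_i$ only increase entries in the order $1<2<0<\overline2<\overline1$. Finally, one checks case by case against Figures~\ref{OSPKN1}--\ref{OSPKN17} that the greedy tableau after four stages is $T(\bm b)$. The parity cases of $b_1,b_3$ and the boundary cases $b_2=b_1+m_1$ and $b_3=b_2+m_1$ correspond to whether the column containing $0$ survives.

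The main obstacle is the coefficient $q(T(\bm b))$. Several distributions of the operators can yield the same leading basis vector, for example by permuting which identical factors get raised, and super signs could cause cancellation. I would handle this by showing that all contributions to the leading vector come from sequences of single-factor applications differing only by reordering among factors. Such reorderings contribute the same sign, since the sign depends only on the parities of the factors passed, and these agree for all such sequences. The coefficient is then a positive count times multinomial factors, as in the $k!$ of Lemma~\ref{action of f_2^{b_1}}. The delicate subcase is the odd factor $\varepsilon_1\wedge\varepsilon_0$ or $\varepsilon_0\wedge\varepsilon_0$, where $f_2$ acts with anticommuting signs. There I would mimic the proof of Lemma~\ref{action of f_2^{b_1}}, using $\sum_{p\neq q}f_2^{(p)}f_2^{(q)}=0$ to reduce $f_2^{2}$ to diagonal terms before collecting coefficients.
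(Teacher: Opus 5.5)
Your four-stage leading-term scheme is the paper's own. The paper simply asserts, stage by stage, that the new largest tableau arises only from the previous leading term, with coefficients $b_2!$, $\lfloor b_3/2\rfloor!$, $b_4!$. Both justifications you offer for those assertions fail as stated. \emph{Monotonicity.} Your lemma is false in the generality you state it, even for tableaux of equal weight. The fact that the $f_i$ only increase entries is exactly what lets a smaller tableau overtake a larger one. Take $m_2=0$, $m_1=3$, $u(Y')=\varepsilon_2\otimes\varepsilon_1\otimes\varepsilon_{\overline{2}}$ and $u(Y)=\varepsilon_1\otimes\varepsilon_{\overline{2}}\otimes\varepsilon_2$. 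Both have weight $\epsilon_1$ and $Y<Y'$, yet $f_1^2u(Y')=2\,\varepsilon_2\otimes\varepsilon_2\otimes\varepsilon_{\overline{1}}$ is smaller than $f_1^2u(Y)=2\,\varepsilon_2\otimes\varepsilon_{\overline{1}}\otimes\varepsilon_2$. What you need, and what is true, is the statement only for $Y'$ the current leading tableau, proved from its specific shape by a budget argument. Suppose a term of $f_i^Nu(Y)$ agrees with the greedy term on all columns to the right of the column $c$ where $Y$ first departs from the leading tableau. Then it has spent the same number of operators there, read off from the weights of those columns, so the same budget $B$ is left at $c$. One then checks, column type by column type, that no raise of $Y$'s column by at most $B$ operators reaches the greedy raise. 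This check needs the defining inequalities, not just monotonicity of entries. For $m_1=0$, $m_2=2$, $b_1=3$, $b_2=1$, the leading tableau after $f_1^{b_2}$ has right column $(2,\overline{2})$, while a lower term has right column $(2,0)$. Since $f_2^2(\varepsilon_2\wedge\varepsilon_0)=\varepsilon_{\overline{2}}\wedge\varepsilon_0$ is a nonzero multiple of $\varepsilon_0\wedge\varepsilon_{\overline{2}}$, the lower term can reach the same column $(0,\overline{2})$ that the greedy term reaches with one operator. So comparing at the first difference decides nothing. Only $b_3\le\min\{b_2+m_1,2b_2\}=1$, which leaves $B\le 1$ there, excludes this.

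\emph{Signs.} Your sign argument fails at every $f_2$-stage, not only when an odd column such as $\varepsilon_1\wedge\varepsilon_0$ is present. Since $f_2$ is odd, applying it to one factor flips that factor's parity. So the parities passed by a later $f_2$ depend on the order, which is exactly $f_2^{(p)}f_2^{(q)}=-f_2^{(q)}f_2^{(p)}$ for $p\neq q$. For instance, $f_2^2(\varepsilon_2\otimes\varepsilon_2)=\varepsilon_{\overline{2}}\otimes\varepsilon_2+\varepsilon_2\otimes\varepsilon_{\overline{2}}$: the two orderings that would produce $\varepsilon_0\otimes\varepsilon_0$ cancel. Hence any basis vector in which two different factors receive an odd number of $f_2$'s has coefficient $0$, and the leading coefficient is not a positive count times multinomial factors. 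The correct computation writes $f_2^{b_3}=f_2^{\,b_3-2\lfloor b_3/2\rfloor}(f_2^2)^{\lfloor b_3/2\rfloor}$, with $f_2^2$ an even element acting as a derivation. In the greedy tableau at most one column receives an odd number of $f_2$'s. Its coefficient is therefore $\lfloor b_3/2\rfloor!$, times single-column coefficients that are each $+1$, times the sign of passing the last $f_2$ across the columns to its right; in the previous leading tableau those columns are all even. The trick from the paper's lemma on $f_2^{b_1}v_\lambda$, which you mention at the end, must replace your reordering argument, not patch one subcase of it.
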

	\begin{proof}
		We first consider the highest weight vector $v_{\lambda}$. The KN tableau $T(\bm{0})$ of $\mathfrak{spo}(4|1)$ corresponding to $v_{\lambda}$ is shown in Figure \ref{fig:spo highest weight}.
		
		\begin{figure}[H]
			\centering
			\begin{tikzpicture}[scale=0.7, every node/.style={font=\large}]
				
				\draw (0,0) rectangle ++(8,1);
				\node at (1,0.5) {1};
				\node at (2,0.5) {1};
				\node at (3,0.5) {$\cdots$};
				\node at (4,0.5) {1};
				\draw (5,0) -- (5,1);
				\node at (5.5,0.5) {1};
				\node at (6.5,0.5) {$\cdots$};
				\node at (7.5,0.5) {1};
				
				\draw (0,-1) rectangle ++(5,1);
				\node at (1,-0.5) {2};
				\node at (2,-0.5) {2};
				\node at (3,-0.5) {$\cdots$};
				\node at (4,-0.5) {2};
				
				\draw[decorate,decoration={brace,amplitude=5pt}] 
				(5,1.2) -- (8,1.2) node[midway,above=3pt] {$m_1$};
				\draw[decorate,decoration={brace,amplitude=5pt,mirror}] 
				(0,-1.5) -- (5,-1.5) node[midway,below=3pt] {$m_2$};
			\end{tikzpicture}
			\caption{The KN tableau of $\mathfrak{spo}(4|1)$ corresponding to the highest weight vector $v_{\lambda}$}
			\label{fig:spo highest weight}
		\end{figure}
		
		Clearly, $T(\bm{0})$ is the smallest tableau in $\mathrm{CST}_{\lambda}(4|1)$. In this case, $v_\lambda = u(T(\bm{0})) = \varepsilon_1^{\otimes m_1} \otimes (\varepsilon_1 \wedge \varepsilon_2)^{\otimes m_2}$, and the claim holds.
		
		For $Y \in \mathrm{CST}_{\lambda}(4|1)$, let $k^{i}_{j}(Y)$ denote the number of entries equal to $j$ in row $i$ of $Y$, where $i \in \{1, 2\}$ and $j \in \mathcal{N}=\{1, 2, 0, \overline{2}, \overline{1}\}$.
		Next, we consider $\bm{f^{b}} v_{\lambda}$, where $\bm{b}=(b_{1}, b_{2}, b_{3}, b_{4})\neq (0,0,0,0)$.
		
		Expressing the result of Lemma \ref{action of f_2^{b_1}} in terms of $\mathrm{CST}_{\lambda}(4|1)$, we obtain
		\begin{align}\label{f_2^{b_1}}
			f_2^{b_1}T(\bm{0})=\lfloor \tfrac{1}{2}b_1 \rfloor! \sum_{U} U,
		\end{align}
		where the sum runs over all tableaux $U \in \mathrm{CST}_{\lambda}(4|1)$ satisfying $k^{1}_{1}(U)=m_1+m_2$, $k^{2}_{\overline{2}}(U)=\lfloor \tfrac{1}{2}b_1 \rfloor$, $k^{2}_{0}(U)=b_1-2\lfloor \tfrac{1}{2}b_1 \rfloor$, and $k^{2}_{2}(U)+k^{2}_{0}(U)+k^{2}_{\overline{2}}(U)=m_2$. Let $T_1$ be the largest tableau among all the summands. If $b_1=2k$ is even, then $T_1$ is given by
		\[
		\begin{tikzpicture}[scale=0.7, baseline=-0.5ex, every node/.style={font=\normalsize}]
			\begin{scope}[xshift=0cm]
				
				\draw (0,0) rectangle ++(7,1);
				\node at (0.5,0.5) {1};
				\node at (1.25,0.5) {$\cdots$};
				\node at (2,0.5) {1};
				\node at (2.75,0.5) {1};
				\node at (3.5,0.5) {$\cdots$};
				\node at (4.25,0.5) {1};
				\node at (5,0.5) {1};
				\node at (5.75,0.5) {$\cdots$};
				\node at (6.5,0.5) {1};
				
				\draw (0,-1) rectangle (4.75,0); 
				\node at (0.5,-0.5) {2};
				\node at (1.25,-0.5) {$\cdots$};
				\node at (2,-0.5) {2};
				\node at (2.75,-0.5) {$\overline{2}$};
				\node at (3.5,-0.5) {$\cdots$};
				\node at (4.25,-0.5) {$\overline{2}$};
				
				\draw[decorate,decoration={brace,mirror,amplitude=4pt}]
				(2.5,-1.1) -- (4.75,-1.1) node[midway,below=2pt] {$k$};
			\end{scope}
		\end{tikzpicture}\;,
		\]
		and if $b_1=2k+1$ is odd, then $T_1$ is given by
		\[
		\begin{tikzpicture}[scale=0.7, baseline=-0.5ex, every node/.style={font=\normalsize}]
			\begin{scope}[xshift=0cm]
				
				\draw (0,0) rectangle ++(7,1);
				\node at (0.4,0.5) {1};
				\node at (1.1,0.5) {$\cdots$};
				\node at (1.8,0.5) {1};
				\node at (2.35,0.5) {1};
				\node at (2.9,0.5) {1};
				\node at (3.65,0.5) {$\cdots$};
				\node at (4.35,0.5) {1};
				\node at (5,0.5) {1};
				\node at (5.75,0.5) {$\cdots$};
				\node at (6.5,0.5) {1};
				
				\draw (0,-1) rectangle (4.75,0); 
				\node at (0.4,-0.5) {2};
				\node at (1.1,-0.5) {$\cdots$};
				\node at (1.8,-0.5) {2};
				\node at (2.35,-0.5) {0};
				\node at (2.9,-0.5) {$\overline{2}$};
				\node at (3.65,-0.5) {$\cdots$};
				\node at (4.35,-0.5) {$\overline{2}$};
				
				\draw[decorate,decoration={brace,mirror,amplitude=4pt}]
				(2.7,-1.1) -- (4.75,-1.1) node[midway,below=2pt] {$k$};
			\end{scope}
		\end{tikzpicture}\;.
		\]
		Clearly, $T_1$ is the KN tableau of $\mathfrak{spo}(4|1)$ corresponding to $f_2^{b_1}v_\lambda$. By \eqref{f_2^{b_1}}, we have
		\[ f_2^{b_1}T(\bm{0})=\lfloor \tfrac{1}{2}b_1 \rfloor! \,T_1+\lfloor \tfrac{1}{2}b_1 \rfloor! \sum_{U<T_1} U. \]
		
		Consider $f_1^{b_2}f_2^{b_1}T(\bm{0})$:
		\begin{align}\label{f_1^{b_2}f_2^{b_1}}
			f_1^{b_2}f_2^{b_1}T(\bm{0})=\lfloor \tfrac{1}{2}b_1 \rfloor! \,\left( f_1^{b_2}T_1\right) +\lfloor \tfrac{1}{2}b_1 \rfloor! \sum_{U<T_1} \left( f_1^{b_2}U\right) .
		\end{align}
		Let $T_2$ be the largest tableau in the expansion of $f_1^{b_2}f_2^{b_1}T(\bm{0})$. Then $T_2$ appears only in the expansion of $f_1^{b_2}T_1$ and does not appear in any $f_1^{b_2}U$. Furthermore, the coefficient of $T_2$ in $f_1^{b_2}T_1$ is $b_2!$. It is easy to see that $T_2$ is the KN tableau of $\mathfrak{spo}(4|1)$ corresponding to $f_1^{b_2}f_2^{b_1}v_\lambda$. Write
		\[ f_1^{b_2}T_1=b_2!\,T_2+\sum_{X'<T_2}h'(X')X', \]
		where $h'(X')\in \mathbb{Z}_{\geq 0}$, and each $X'$ is a tableau in $\mathrm{CST}_{\lambda}(4|1)$ strictly smaller than $T_2$. Substituting the expression for $f_1^{b_2}T_1$ into \eqref{f_1^{b_2}f_2^{b_1}} yields
		\begin{align*}
			f_1^{b_2}f_2^{b_1}T(\bm{0})&=\lfloor \tfrac{1}{2}b_1 \rfloor! \,\biggl( b_2!\,T_2+\sum_{X'<T_2}h'(X')X'\biggr) +\lfloor \tfrac{1}{2}b_1 \rfloor! \sum_{U<T_1} \left( f_1^{b_2}U\right) \\
			&=h_2T_2+\sum_{X<T_2}h(X)X , 
		\end{align*}
		where $h_2=\lfloor \tfrac{1}{2}b_1 \rfloor! b_2!$, $h(X)\in \mathbb{Z}_{\geq 0}$, and each $X$ is a tableau in $\mathrm{CST}_{\lambda}(4|1)$ strictly smaller than $T_2$.
		
		Consider $f_2^{b_3}f_1^{b_2}f_2^{b_1}T(\bm{0})$:
		\begin{align}\label{f_2^{b_3}f_1^{b_2}f_2^{b_1}}
			f_2^{b_3}f_1^{b_2}f_2^{b_1}T(\bm{0})=h_2\left( f_2^{b_3}T_2\right) +\sum_{X<T_2}h(X)\left( f_2^{b_3}X\right) .
		\end{align}
		Let $T_3$ be the largest tableau in the expansion of $f_2^{b_3}f_1^{b_2}f_2^{b_1}T(\bm{0})$. Then $T_3$ appears only in the expansion of $f_2^{b_3}T_2$ and does not appear in any $f_2^{b_3}X$. Furthermore, the coefficient of $T_3$ in $f_2^{b_3}T_2$ is $\lfloor \tfrac{1}{2}b_3 \rfloor!$. It is easy to see that $T_3$ is the KN tableau of $\mathfrak{spo}(4|1)$ corresponding to $f_2^{b_3}f_1^{b_2}f_2^{b_1}v_\lambda$. Write
		\[ f_2^{b_3}T_2=\lfloor \tfrac{1}{2}b_3 \rfloor!\, T_3+\sum_{Z'<T_3}p'(Z')Z', \]
		where $p'(Z')\in \mathbb{Z}$, and each $Z'$ is a tableau in $\mathrm{CST}_{\lambda}(4|1)$ strictly smaller than $T_3$. Substituting the expression for $f_2^{b_3}T_2$ into \eqref{f_2^{b_3}f_1^{b_2}f_2^{b_1}} yields
		\begin{align*}
			f_2^{b_3}f_1^{b_2}f_2^{b_1}T(\bm{0})&=h_2\biggl( \lfloor \tfrac{1}{2}b_3 \rfloor!\, T_3+\sum_{Z'<T_3}p'(Z')Z'\biggr) +\sum_{X<T_2}h(X)\left( f_2^{b_3}X\right)\\
			&=h_3T_3+\sum_{Z<T_3}p(Z)Z , 
		\end{align*}
		where $h_3=h_2 \lfloor \tfrac{1}{2}b_3 \rfloor!$, $p(Z)\in \mathbb{Z}$, and each $Z$ is a tableau in $\mathrm{CST}_{\lambda}(4|1)$ strictly smaller than $T_3$.
		
		Finally, consider $f_1^{b_4}f_2^{b_3}f_1^{b_2}f_2^{b_1}T(\bm{0})$:
		\begin{align}\label{b}
			\bm{f^{b}}T(\bm{0})=h_3\left( f_1^{b_4}T_3\right) +\sum_{Z<T_3}p(Z)\left( f_1^{b_4}Z\right)  .
		\end{align}
		Let $T_4$ be the largest tableau in the expansion of $\bm{f^{b}}T(\bm{0})$. Then $T_4$ appears only in the expansion of $f_1^{b_4}T_3$ and does not appear in any $f_1^{b_4}Z$. Furthermore, the coefficient of $T_4$ in $f_1^{b_4}T_3$ is $b_4!$. It is easy to see that $T_4$ is exactly $T(\bm{b})$, the KN tableau of $\mathfrak{spo}(4|1)$ corresponding to $\bm{f^{b}}v_\lambda$. Write
		\[ f_1^{b_4}T_3=b_4!\,T(\bm{b})+\sum_{Y'<T(\bm{b})}q'(Y')Y', \]
		where $q'(Y')\in \mathbb{Z}_{\geq 0}$, and each $Y'$ is a tableau in $\mathrm{CST}_{\lambda}(4|1)$ strictly smaller than $T(\bm{b})$. Substituting the expression for $f_1^{b_4}T_3$ into \eqref{b} yields
		\begin{align*}
			\bm{f^{b}}T(\bm{0})&=h_3\biggl( b_4!\,T(\bm{b})+\sum_{Y'<T(\bm{b})}q'(Y')Y'\biggr) +\sum_{Z<T_3}p(Z)\left( f_1^{b_4}Z\right)  \\
			&=q(T(\bm{b}))T(\bm{b})+\sum_{Y<T(\bm{b})}q(Y)Y , 
		\end{align*}
		where $q(T(\bm{b}))=h_3b_4!$, $q(Y)\in \mathbb{Z}$, and each $Y$ is a tableau in $\mathrm{CST}_{\lambda}(4|1)$ strictly smaller than $T(\bm{b})$. Therefore, we have
		\[ \bm{f^{b}}v_\lambda=q(T(\bm{b}))u(T(\bm{b}))+\sum_{Y<T(\bm{b})}q(Y)u(Y). \]
		Thus, the proposition holds.
	\end{proof}
	
	\begin{theorem}
		All Verma vectors of the irreducible representation $L(\lambda)$ of $\mathfrak{spo}(4|1)$ are linearly independent. 
	\end{theorem}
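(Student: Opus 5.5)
The plan is the standard triangularity argument of \cite{Raghavan1999}, using Proposition~\ref{spo largest tableau} and Lemma~\ref{lem:spo monomials linearly independent}. Realize $L(\lambda)$ as the submodule of $W=V^{\otimes m_1}\otimes(\wedge^2V)^{\otimes m_2}$ generated by $v_\lambda$. Suppose we have a nontrivial linear relation
\[
\sum_{\bm b\in S} c_{\bm b}\,\bm{f^{b}}v_\lambda=0,
\]
where $S$ is a finite nonempty set of parameter tuples satisfying \eqref{verma inequalities} and all $c_{\bm b}\neq 0$.

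First, by Theorem~\ref{KN-correspondence} the map $\bm b\mapsto T(\bm b)$ is injective: $\psi$ is a bijection, so distinct $\bm b$ give distinct KN tableaux. Since each $T(\bm b)$ lies in $\mathrm{CST}_\lambda(4|1)$ and the order of Definition~\ref{spo total order} is total, there is a unique $\bm b_0\in S$ such that $T(\bm b_0)$ is the largest among $\{T(\bm b)\mid \bm b\in S\}$.

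Next, expand every $\bm{f^{b}}v_\lambda$ using Proposition~\ref{spo largest tableau}. Each such vector equals $q(T(\bm b))u(T(\bm b))$ plus a $\mathbb Z$-combination of vectors $u(Y)$ with $Y<T(\bm b)$. For $\bm b\neq\bm b_0$ we have $T(\bm b)<T(\bm b_0)$, so every $u(Y)$ in its expansion satisfies $Y\le T(\bm b)<T(\bm b_0)$. Hence $u(T(\bm b_0))$ does not occur in the expansion of $\bm{f^{b}}v_\lambda$ for any $\bm b\neq \bm b_0$. In the expansion of $\bm{f^{b_0}}v_\lambda$ it occurs with coefficient $q(T(\bm b_0))\in\mathbb Z_{>0}$, which is nonzero over $\mathbb C$.

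By Lemma~\ref{lem:spo monomials linearly independent}, the vectors $u(Y)$ for $Y\in\mathrm{CST}_\lambda(4|1)$ are linearly independent in $W$. So the coefficient of $u(T(\bm b_0))$ in the relation, namely $c_{\bm b_0}q(T(\bm b_0))$, must vanish. This forces $c_{\bm b_0}=0$, a contradiction. Therefore $H$ is linearly independent.

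The only delicate point is making sure the leading terms of different Verma vectors really are distinct basis vectors. I expect this to be the main obstacle, since it relies on the injectivity from Theorem~\ref{KN-correspondence} combined with the leading-term identification in Proposition~\ref{spo largest tableau}. Granting those, the rest is formal. As a corollary, Proposition~\ref{dimension} gives $|H|=|\mathrm{KN}_\lambda(4|1)|=\dim L(\lambda)$, so $H$ is a basis of $L(\lambda)$, which is part ii) of the Main Theorem.
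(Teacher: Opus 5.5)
Your proposal is correct and is essentially the paper's own argument. Both combine the leading-term expansion of Proposition~\ref{spo largest tableau}, the injectivity of $\bm b\mapsto T(\bm b)$ from Theorem~\ref{KN-correspondence}, and the linear independence of the $u(Y)$ from Lemma~\ref{lem:spo monomials linearly independent}; the only difference is cosmetic, since the paper lists all KN tableaux in decreasing order and kills $x_1,x_2,\dots$ one at a time by induction, whereas you take the largest leading tableau in the support of a putative nontrivial relation and reach a contradiction.
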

	\begin{proof}
		Let the set of KN tableaux of $\mathfrak{spo}(4|1)$ be given by
		\[ \mathrm{KN}_{\lambda}(4|1)=\left\lbrace T(\bm{b_1})> T(\bm{b_2})> \cdots> T(\bm{b_r}) \right\rbrace. \]
		Then the corresponding set of Verma vectors is
		\[ H =\left\lbrace \bm{f^{b_1}} v_\lambda, \bm{f^{b_2}} v_\lambda, \cdots, \bm{f^{b_r}} v_\lambda \right\rbrace. \]
		
		Suppose there exist $x_1, x_2, \ldots, x_r \in \mathbb{C}$ such that
		\[
		x_1 \bm{f^{b_1}} v_\lambda + x_2 \bm{f^{b_2}} v_\lambda + \dots + x_r \bm{f^{b_r}} v_\lambda = 0.
		\]
		Then by Proposition \ref{spo largest tableau}, we have
		\begin{align*}
			&\quad x_1\biggl( q(T(\bm{b_1}))u(T(\bm{b_1})) + \sum_{Y_1 < T(\bm{b_1})} q(Y_1)\, u(Y_1)\biggr) \\
			&+ x_2\biggl( q(T(\bm{b_2}))u(T(\bm{b_2})) + \sum_{Y_2 < T(\bm{b_2})} q(Y_2)\, u(Y_2)\biggr) \\
			& + \cdots + x_r\biggl( q(T(\bm{b_r}))u(T(\bm{b_r})) + \sum_{Y_r < T(\bm{b_r})} q(Y_r)\, u(Y_r)\biggr) = 0,
		\end{align*}
		where $q(T(\bm{b_1})), q(T(\bm{b_2})), \dots, q(T(\bm{b_r})) \in \mathbb{Z}_{> 0}$.
		Since $T(\bm{b_1}) > T(\bm{b_2}) > \cdots > T(\bm{b_r})$, it follows that
		\[
		u(T(\bm{b_1})) > u(T(\bm{b_2})) > \cdots > u(T(\bm{b_r})).
		\]
		Combined with Lemma \ref{lem:spo monomials linearly independent}, we deduce that $x_1 = 0$. Now assume that for some $i \in \left\lbrace 2, 3, \ldots, r \right\rbrace$, we have $x_1 = x_2 = \cdots = x_{i-1} = 0$. Then
		\begin{align*}
			&\quad x_i\biggl( q(T(\bm{b_i}))u(T(\bm{b_i})) + \sum_{Y_i < T(\bm{b_i})} q(Y_i)\, u(Y_i)\biggr)\\
			& + x_{i+1}\biggl( q(T(\bm{b_{i+1}}))u(T(\bm{b_{i+1}})) + \sum_{Y_{i+1} < T(\bm{b_{i+1}})} q(Y_{i+1})\, u(Y_{i+1})\biggr) \\
			& + \cdots 
			+ x_r\biggl( q(T(\bm{b_r}))u(T(\bm{b_r})) + \sum_{Y_r < T(\bm{b_r})} q(Y_r)\, u(Y_r)\biggr) = 0.
		\end{align*}
		Since $u(T(\bm{b_i})) > u(T(\bm{b_{i+1}})) > \cdots > u(T(\bm{b_r}))$, we obtain $x_i = 0$. By mathematical induction, we have
		\[
		x_1 = x_2 = \cdots = x_r = 0.
		\]
		
		Therefore, all the Verma vectors are linearly independent. 
	\end{proof}
	
	By Theorem \ref{KN-correspondence}, there is a one-to-one correspondence between the set $H$ of Verma vectors of $L(\lambda)$ and the set $\mathrm{KN}_{\lambda}(4|1)$ of KN tableaux of $\mathfrak{spo}(4|1)$, hence
	\[
	|H| = |\mathrm{KN}_{\lambda}(4|1)|.
	\]
	Combined with Proposition \ref{dimension}, we obtain
	\[
	\dim L(\lambda) = |H|.
	\]
	Since the Verma vectors in $H$ are linearly independent, $H$ forms the Verma basis of $L(\lambda)$. We summarize this result in the following theorem.
	
	\begin{theorem}\label{Verma basis}
		The set $H$ of Verma vectors is the Verma basis of the finite dimensional irreducible representation $L(\lambda)$ of $\mathfrak{spo}(4|1)$.
	\end{theorem}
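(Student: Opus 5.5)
The proof will assemble results already established and will not compute anything new in $L(\lambda)$. The first ingredient is the count of $H$. Theorem \ref{KN-correspondence} gives a bijection $\psi\colon \mathrm{KN}_{\lambda}(4|1)\to H$. Here one should take $H$ as the set of exponent tuples $\bm b$ satisfying \eqref{verma inequalities}, rather than as a set of vectors, so that no two parameters could collapse to the same vector. Together with Proposition \ref{dimension}, this yields
\[
|H| = |\mathrm{KN}_{\lambda}(4|1)| = \dim L(\lambda).
\]

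The second ingredient is linear independence, supplied by the preceding theorem. That theorem relies on Proposition \ref{spo largest tableau}: each $\bm{f^{b}}v_\lambda$ has a nonzero leading term $q(T(\bm b))\,u(T(\bm b))$ with respect to the total order of Definition \ref{spo total order}, and all other terms $u(Y)$ have $Y<T(\bm b)$. Since the map $\bm b\mapsto T(\bm b)$ is injective, the leading terms of distinct Verma vectors are distinct. By Lemma \ref{lem:spo monomials linearly independent} the $u(Y)$ are linearly independent in $W$, so the family is unitriangular up to the positive scalars $q(T(\bm b))$ and hence linearly independent. In particular, distinct tuples $\bm b$ give distinct nonzero vectors, which justifies identifying $H$ with its set of parameters.

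Finally, $H$ lies in $L(\lambda)$, realized as the submodule of $W$ generated by $v_\lambda$. It is a linearly independent set whose cardinality equals $\dim L(\lambda)$, so it spans $L(\lambda)$ and is therefore a basis. This is the Verma basis.

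The main point needing care is whether the hypotheses feeding this assembly are sound. The assembly step itself is immediate. The real content sits in Proposition \ref{spo largest tableau}, specifically in the claim that at each stage the largest tableau in $f_i^{b}$ applied to the current leading tableau is the next KN tableau $T_j$, and that it does not arise from smaller tableaux. If I were re-verifying, I would check this first. I would show that applying $f_1$ or $f_2$ to $u(Y)$ can only move entries upward in $\mathcal N$. I would then check that the map sending a tableau to the largest tableau in the expansion of $f_i^{b}u(Y)$ is weakly monotone in $Y$, so that expansions of smaller tableaux remain strictly smaller. Signs coming from the odd vector $\varepsilon_0$ only affect the coefficients of lower terms, and the leading coefficient is a product of factorials and hence nonzero. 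That is the only nonvanishing fact the argument needs.
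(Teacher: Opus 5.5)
Your argument is the paper's own: Theorem~\ref{KN-correspondence} with Proposition~\ref{dimension} gives $|H|=\dim L(\lambda)$, and the preceding linear-independence theorem (built on Proposition~\ref{spo largest tableau} and Lemma~\ref{lem:spo monomials linearly independent}) then makes $H$ a basis of $L(\lambda)\subseteq W$. Indexing $H$ by exponent tuples, and deducing from independence that distinct tuples give distinct nonzero vectors, is a small tightening that the paper leaves implicit.

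One caution applies only to your closing aside on re-verifying Proposition~\ref{spo largest tableau}. Weak monotonicity of the leading-term map would give $\le$, not the strict inequality you need. Strictness genuinely fails for $f_2$ in general: the columns $(2,\overline{2})<(0,0)$ are both sent by $f_2$ to nonzero multiples of the column $(0,\overline{2})$. So a re-check would have to use the specific form of the lower terms at each stage, not a general monotonicity principle.
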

	
	\begin{example}
		All the Verma vectors in Example \textnormal{\ref{example}} form the Verma basis of $L(\lambda)$ with the highest weight $\lambda = 3\epsilon_1 + 2\epsilon_2$.
	\end{example}

\section*{Acknowledgments}
The authors are partially supported by NSFC (Grant No. 12161090).

\end{document}